\documentclass[11pt, a4paper]{amsart}
\usepackage[utf8]{inputenc}
\usepackage[english]{babel}
\usepackage{enumerate}
\usepackage{color}
\usepackage{stmaryrd}
\usepackage[text={15cm,21.5cm},centering]{geometry}
\usepackage{marginnote}
 \usepackage{framed}
\usepackage{siunitx}
\usepackage{xcolor}
\usepackage{esint,amsmath,amssymb}
\usepackage{graphicx}
\usepackage[showonlyrefs]{mathtools}
\usepackage[colorlinks=true, pdfstartview=FitV, linkcolor=blue, citecolor=blue, urlcolor=blue,pagebackref=false]{hyperref}
\usepackage{microtype}

\usepackage{bm}
\usepackage{scalerel}
\usepackage{mathrsfs}
\usepackage{tikz}
\usepackage[all]{xy} \xyoption{arc} \xyoption{color}
\usepackage{epsfig}
\usepackage{amsbsy}
\usepackage[font={footnotesize}]{caption}

\usepackage[skip=0cm,list=true,labelfont=it]{subcaption}
\usepackage{enumitem}

\usepackage{comment}

\newcommand{\eps}{\varepsilon}
\newtheorem{proposition}{Proposition}
\newtheorem{theorem}[proposition]{Theorem}

\theoremstyle{remark}
\newtheorem{remark}[proposition]{Remark}

\theoremstyle{definition}
\newtheorem{definition}[proposition]{Definition}

\newtheorem{assumption}[proposition]{Assumption}

\numberwithin{equation}{section}
\numberwithin{proposition}{section}

\title[Shallow water models for the dynamics of OWCs]{
Shallow water models for the dynamics of oscillating water columns
}

\author{Edoardo Bocchi}
	\address[Edoardo Bocchi]{Dipartimento di Matematica, Politecnico di Milano, Piazza Leonardo da
Vinci 32, 20133 Milano, Italy}
	\email{edoardo.bocchi@polimi.it}

\begin{document}

	\begin{abstract}
 We investigate the dynamics of an oscillating water column, which is a wave energy converter where water waves interact with a fixed partially immersed structure and an air chamber. Considering the shallow water regime, the fluid motion is governed by either the one-dimensional nonlinear shallow water equations or the Boussinesq-Abbott equations, while the air pressure variation in the chamber acts as a spring force on the fluid. The presence of the structure and the air chamber introduces constraints into the fluid equations. Assuming conservation of the total fluid-elastic energy in the absence of structural damping, we reformulate the constrained equations as transmission problems between the open water and the chamber and show their local well-posedness. We also consider a different modeling approach in which the upper part of the fluid in the chamber is treated as a rigid layer (water column) free to move vertically above the lower fluid. For this configuration, we reformulate the dynamics as wave-spring-mass systems, where the motion of the water column is driven by the shallow water waves outside the chamber. We establish their local well-posedness and show that the effective buoyancy period of the water column motion is determined by the competition between the added mass and the stiffness of the spring force.
	\end{abstract}

	\maketitle

\section{Introduction}

The increasing energy demand of society over the last decades has shifted the focus of engineers toward renewable energy sources. Among the available options, marine energy is one of the few that is not significantly affected by unfavorable atmospheric conditions, such as the lack of sunlight or wind, thus making it a promising energy source. However, a major challenge in the development of wave energy converters is to maximize their efficiency, defined as the ratio of the captured energy to the incident wave energy. This issue has been extensively investigated through both numerical and experimental studies in ocean engineering \cite{Reza13, Reza17, Reza18}. In order to design and develop efficient devices, a deep understanding of wave-structure interactions is fundamental and can be achieved through the analysis of accurate mathematical models describing the underlying phenomena.

Among the different wave energy converters (see \cite{Baba18} for an overview), here we are interested in the so-called \emph{oscillating water columns} (OWCs) installed onshore (see Figure \ref{owc}). In these devices, incident waves arriving from offshore encounter a fixed partially immersed structure with vertical lateral walls and enter a partially closed chamber. A turbine is located at the top of the chamber and activated by the airflow generated by the fluid volume variation; consequently, a generator connected to the turbine transforms the mechanical energy of the turbine into electric energy. The name of the converter is due to the behavior of the water inside the chamber: macroscopically, it behaves as a flat column vertically oscillating inside the chamber. One example of this type of device is the Pico Power Plant \cite{Dima17} installed on Pico Island, Azores (Portugal).

\begin{figure}
	\includegraphics[scale=0.6]{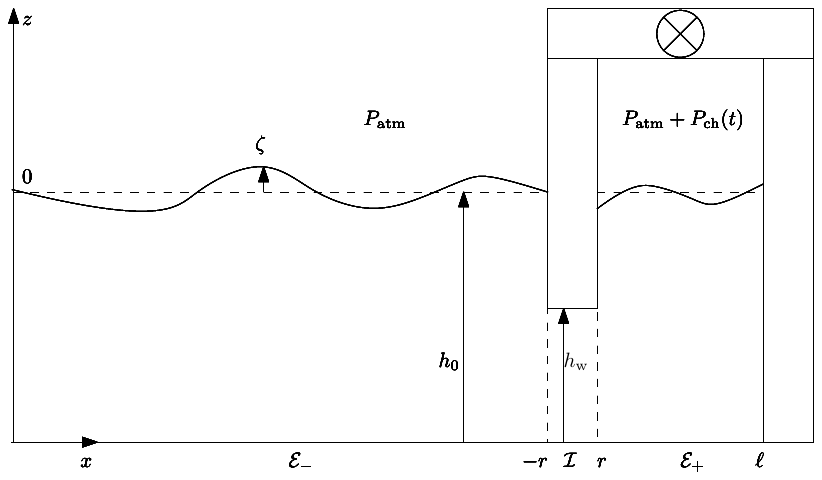}
	\caption{Configuration of the onshore OWC.}
	\label{owc}
\end{figure}

The equations governing the motion of ocean waves are the water waves equations, \textit{i.e.}, the free surface incompressible and irrotational gravity-driven Euler equations (see the monograph \cite{LanBook}). However, in most applications, they are too computationally expensive for direct numerical simulation. For this reason, instead of the full equations, shallow water asymptotic models are used to describe the dynamics in coastal zones. Here we consider two of these models, which can be unified in the one-dimensional system
\begin{equation}\label{intro-system}
	\begin{cases}
		\partial_t \zeta + \partial_x q=0 \\[5pt]
		\Big(1-\dfrac{\mu}{3}\partial_x^2\Big) \partial_t q + \varepsilon \partial_x \Big(\dfrac{q^2}{h}\Big) + h \partial_x \zeta=  -\dfrac{h}{\varepsilon}\partial_x{\underline{P}}
	\end{cases} \quad\text{with} \quad \mu\geq 0,
\end{equation}
where $\zeta$ is the fluid surface elevation,  $h$ is the fluid height, $q$ is the fluid horizontal discharge, and $\underline{P}$ is the fluid surface pressure deviation from atmospheric pressure (see Section \ref{sec-shallow} for more details). When the shallowness parameter $\mu=0$, \eqref{intro-system} corresponds to the nonlinear shallow water (or Saint-Venant) equations; when $\mu \neq 0$, it corresponds to the Boussinesq-Abbott equations. The advantage of these approximated models is twofold: the dimension of the full problem is reduced, as they are set in the horizontal projection of the fluid domain, and the domain is time-independent \cite{Lan20}.\\
In this study, we consider the shallow water models \eqref{intro-system} in the presence of an onshore OWC. This problem belongs to the broad class of wave-structure interaction problems, in which the fluid surface is in contact with a fixed or moving partially immersed structure or with a solid boundary.
 In the last century, the dynamics of floating objects was investigated from a theoretical point of view using simplified linear and time-harmonic models \cite{John1, John2, Ursell}. 
Considering water waves equations, a nonlinear depth-averaged model was introduced in \cite{Lan17} with a contact constraint accounting for the presence of the object.  Recently, well-posedness of the two-dimensional linearized equations around the rest state was established in the two-dimensional case for both fixed \cite{LanMin26} and moving objects \cite{LanPau25}. A major issue to deal with is the contact between the fluid surface and the solid boundary, as the fluid velocity becomes singular in the vicinity of contact points. It was previously studied using the full nonlinear equations in the shoreline problem, where waves approach a sloping beach, for which a priori estimates \cite{Poyferre19} and local-in-time well-posedness considering surface tension \cite{MinWan20, MinWan21, MinWan24} were obtained under smallness assumptions on the contact angles. Considering other fluid dynamics, the contact issue has been addressed taking into account both viscosity and capillarity: in the cases of Stokes \cite{ZheTice17,GuoTice18}, Navier-Stokes \cite{GuoTice24, GTWZ24} and Darcy (or one-phase Muskat) \cite{BocCasGan26} flows in a vessel with vertical lateral walls, no smallness assumptions are required.

Regarding the interaction problem with floating objects, well-posedness of shallow water models \eqref{intro-system} is now established for different dimensions and configurations. In the horizontal one-dimensional case, it was obtained for $\mu=0$ in \cite{IguLan21} as an application of a general theory on hyperbolic initial boundary value problems; for $\mu \neq 0$, it was first established in \cite{BreLanMet21} for a system formally equivalent to \eqref{intro-system} with a fixed object, and then in \cite{BecLan22} with a vertically moving object, including the study of the return to equilibrium scenario. In \cite{MS-MTT19, VerMatTuc21}, this approach was used for a viscous version of \eqref{intro-system} with $\mu=0$, while other techniques were employed in \cite{God-al-18, God-al-20} for numerical simulations of the problem.
In the two-dimensional case, the vertical motion and return to equilibrium of a structure with vertical walls were studied in the axisymmetric without swirl setting for $\mu=0$ in \cite{Boc20-1, Boc20-2} and recently for $\mu \neq 0$ in \cite{Bec-al-26}. The general two-dimensional setting was considered for fixed structures, first with vertical walls \cite{IguLan25} and later with non-vertical walls \cite{IguLan26}, where a free boundary problem due to the moving contact line must be addressed.\\
OWC devices have been intensively studied in ocean engineering using linear potential models \cite{Eva78, Eva82, EvaPor95, Reza15}. Following the nonlinear approach used for floating objects, the dynamics of OWCs in shallow water was modeled in \cite{BocHeVer21} considering \eqref{intro-system} with $\mu=0$ and a constant air pressure. Subsequently, the model was refined  in \cite{BocHeVer23}, to account for a more realistic time-dependent air pressure inside the chamber.  Assuming conservation of the total energy of the system, the  equations were reformulated as a transmission problem between the open water and chamber domains and its local well-posedness was established. In \cite{Par23}, the same fluid equations were coupled with an adiabatic law to model and numerically simulate trapped air pockets in coastal caves.\\
The aim of this study is threefold:
\begin{enumerate}
	\item To rigorously justify the notion of total energy introduced in \cite{BocHeVer23} that allows one to reformulate \eqref{intro-system} in the presence of an OWC as a transmission problem;\smallskip
	\item To take into account also dispersive effects in the dynamics of OWCs considering \eqref{intro-system} with $\mu \neq 0$;\smallskip
	\item To derive shallow water models for the dynamics of OWCs treating the upper part of the fluid in the chamber as a rigid layer free to move above the lower fluid.
\end{enumerate}
The main results corresponding to the previous objectives are:
\begin{enumerate}
	\item The introduction of the total energy in Definition \ref{def-totalenergy} and the derivation of coupling conditions in Proposition \ref{prop-intpres-con} ensuring total energy conservation; \smallskip
	\item The reformulation of \eqref{intro-system} with $\mu\neq0$ in the presence of an OWC as a transmission problem in Propositions \ref{prop-transmission} and \ref{prop-evo-qi} and its local well-posedness in Theorem \ref{theo-trans-wp-disp}; \smallskip
	\item The derivation of wave-spring-mass systems in Proposition \ref{prop-wavespringmass} and their local well-posedness for $\mu=0$ and $\mu\neq0$ respectively in Theorems \ref{theo-wave-spring-0} and \ref{theo-wave-spring}.
\end{enumerate}

\subsection{Outline of the paper}
In Section \ref{sec-shallow}, we introduce the asymptotic shallow water models that govern the fluid dynamics, the hyperbolic nonlinear shallow water equations and the dispersive Boussinesq-Abbott equations, describing the different physical regimes considered. Moreover, we recall the energy conservation at order $O(\eps\mu)$ exhibited by the two reduced models in the absence of structures: exact conservation in the hyperbolic case ($\mu=0$) and almost conservation in the dispersive case ($\mu\neq0$).\\ In Section \ref{sec-airpressure} we introduce the air pressure dynamics inside the chamber. First, we discuss the idealized scenario where the chamber is completely closed and the motion of air is a polytropic adiabatic process. Second, in a real scenario where structural damping occurs at the turbine, we introduce a linear ODE coupling the air pressure to the fluid mean surface elevation in the chamber.\\ In Section \ref{sec-constramodels}, we show how the presence of both the partially immersed structure and the air chamber introduces constraints into the fluid equations, thereby splitting the dynamics between the region under the structure and its complement. In order to couple back the separate regions, continuity of the discharge at the structure walls and conservation of the total energy at order $O(\eps\mu)$ where no damping occurs are assumed. The total energy is defined as the sum of the shallow water fluid energy and the elastic potential energy associated with the spring force induced by the air pressure deviation from atmospheric pressure inside the chamber. As a conditional result for total energy conservation, coupling conditions are obtained in the form of shallow water Bernoulli principles. The constrained shallow water models are then reformulated as transmission problems between the open water and the chamber coupled with the evolution equation of the fluid discharge under the structure, which manifests an additional leading-order term in the dispersive case. Local well-posedness of the transmission problems is then shown. In the hyperbolic case, we apply the result in \cite{BocHeVer23}, where the associated initial boundary value problem is solved by means of a Kreiss symmetrizer that provides a hidden trace regularity crucial for the closure of a priori estimates. In the dispersive case, the problem is reduced to an infinite-dimensional ODE and Cauchy-Lipschitz theory applies.\\ In Section \ref{sec-springmass}, we introduce a different modeling approach by considering the fluid in the chamber as a multiphase system: the upper part is treated as a rigid layer (water column) free to move vertically above the lower part. The fluid equations in the chamber domain are then written in terms of the interface between the two phases, for which a new contact constraint is imposed, and of the interface pressure, which becomes an unknown of the problem as it depends on the motion of the water column. This is determined by a second order integro-differential equation on the fluid mean surface elevation, with restoring terms due to hydrostatic and spring forces, coupled with the fluid equations in the chamber. Following the analysis carried out in Section \ref{sec-constramodels}, we assume  conservation of the sum of the fluid energy and the mechanical energy of the water column to derive coupling conditions in the form of shallow water Bernoulli principles. These allow us to reformulate the dynamics of OWCs as wave-spring-mass systems, where the motion of the water column is driven by the shallow water waves outside the chamber. Considering both fluid models, an added mass phenomenon appears in the integro-differential equation and an additional contribution is due to dispersive effects, which also change the nature of the phenomenon introducing a new coupling with the fluid dynamics outside the chamber. An interesting fact is that the effective buoyancy period of the water column motion is determined by the competition between the added mass and the stiffness of the spring force, for which three scenarios are possible. Finally, analogously to Section \ref{sec-constramodels}, local well-posedness of the wave-spring-mass systems is then shown in both hyperbolic and dispersive cases.\\
In Appendix \ref{sec-appendix}, we show the details of the non-dimensionalization of the equations and the elastic potential energy introduced in the paper.
\subsection{General notation}Throughout the paper, for a function $f$ defined on the spatial domain $(-\infty,\ell)$, we denote by $f_{|_{D}}$ the restriction of $f$ to a subset $D\subset (-\infty, \ell)$.
Whenever no confusion may arise, we use for simplicity the notation $f(r)$ for the spatial trace $f(t,r)$ of a function $f(t,x)$, with the time dependence omitted. Similarly, the notation $f=g$ at $x=r$ stands for the equality $f(t,r)=g(t,r)$.

\section{Shallow water models}\label{sec-shallow}
In this section, we discuss the equations governing the fluid dynamics of OWC devices. We consider the two-dimensional motion of a homogeneous, inviscid, incompressible and irrotational fluid in the shallow water regime. More precisely, we address the physical scenario where the characteristic vertical scale of the problem, the still water depth $h_0$, is much smaller than the characteristic horizontal scale, the wavelength $L$. After defining the shallowness parameter by
$$\mu=\frac{h_0^2}{L^2}\geq 0,$$ 
we say that the \emph{shallow water regime} holds when $\mu\ll 1$. In this study, we consider shallow water waves of either large or small amplitude. More precisely, after introducing the nonlinearity parameter  
$$\eps=\frac{a}{h_0}\in (0,1],$$ 
where $a$ denotes the characteristic wave amplitude, we refer to the case $\eps=O(1)$ as the \emph{large amplitude regime} and to the case $\eps=O(\mu)$ as the \emph{small amplitude (or weakly nonlinear) regime}. \\
In the shallow water-large amplitude regime, the water waves equations are approximated with precision $O(\mu)$ by the nonlinear shallow water equations
\begin{equation}\label{NSW}
	\begin{cases}
		\partial_t \zeta + \partial_x q=0 \\[5pt]
		\partial_t q + \partial_x \Big(\dfrac{q^2}{h}\Big) + gh \partial_x \zeta= -\dfrac{h}{\rho}\partial_x{\underline{P}}.
	\end{cases}
\end{equation}
These equations are set in the horizontal projection of the time-dependent fluid domain and the unknowns are the fluid surface elevation $\zeta$, assumed to be the graph of a function of $x\in \mathbb{R}$, the fluid discharge $q$ defined as the horizontal fluid velocity vertically integrated over the fluid height and the fluid surface pressure $P_{\rm atm} + \underline{P}$, where $P_{\rm atm}$ is the reference constant atmospheric pressure. Here, $h=h_0+\zeta$ is the fluid height, $g$ is the gravity constant and $\rho$ is the (constant) fluid density. The first equation is called the continuity equation and it accounts for mass conservation, while the second equation is called the momentum equation and it accounts for conservation of the linear momentum. Analytically speaking, \eqref{NSW} represents a hyperbolic system \cite{BenSerBook}.\\
In the shallow water-small amplitude regime, the water waves equations are approximated with precision $O(\mu^2)$ by the Boussinesq-Abbott equations
\begin{equation}\label{Bou-Abb}
	\begin{cases}
		\partial_t \zeta + \partial_x q=0 \\[5pt]
		\Big(1-\dfrac{h_0^2}{3}\partial_x^2\Big) \partial_t q + \partial_x \Big(\dfrac{q^2}{h}\Big) + gh \partial_x \zeta= -\dfrac{h}{\rho}\partial_x{\underline{P}}.
	\end{cases}
\end{equation}The additional second-order term in the momentum equation of \eqref{Bou-Abb} accounts for dispersion in the dynamics, hence \eqref{Bou-Abb} can be seen as a dispersive perturbation of the hyperbolic system \eqref{NSW}. Many other Boussinesq-type systems formally equivalent to \eqref{Bou-Abb} at precision $O(\mu^2)$ can be derived from the water waves equations (see \cite{LanBook}), but the interest of $\eqref{Bou-Abb}$ lies in its perturbative structure. 

Writing the equations with dimensionless unknowns and variables (see \cite[Appendix A]{BecLan22} for the detailed non-dimensionalization), the previous shallow water models can be unified under the system
\begin{equation}\label{dimless-shallow}
	\begin{cases}
		\partial_t \zeta + \partial_x q=0 \\[5pt]
		\Big(1-\dfrac{\mu}{3}\partial_x^2\Big) \partial_t q + \eps \partial_x \Big(\dfrac{q^2}{h}\Big) + h \partial_x \zeta=  -\dfrac{h}{\eps}\partial_x{\underline{P}}\\[5pt]
	\end{cases} \quad\text{with} \quad \mu\geq 0,
\end{equation}
where the dimensionless fluid height is $h=1+\eps \zeta$.
The dimensionless nonlinear shallow water equations correspond to $\mu =0$ and the dimensionless Boussinesq-Abbott equations to $\mu\neq 0$. 
Another difference between the hyperbolic and the dispersive case is the nature of the energy conservation they exhibit. Indeed, after multiplying the continuity equation in \eqref{dimless-shallow} by $\zeta$, the momentum equation by $q/h$ and adding them up, we find the local energy balance 
\begin{equation}\label{energy-balance}
	\partial_t e + \partial_x f = \frac{\underline{P} \partial_x q  }{\eps}+\eps\mu R,
\end{equation}
where the fluid energy density  \begin{equation}\label{energy-density}
	e(\zeta, q)=e_{\rm gra}(\zeta)+ e_{\rm kin}(\zeta, q)
\end{equation} is the sum of the gravitational potential and kinetic energy densities
\begin{equation*}
	e_{\rm gra}(\zeta) =\frac{\zeta^2}{2}, \qquad  e_{\rm kin}(\zeta, q)= \frac{q^2}{2h} + \mu \frac{(\partial_x q)^2}{6h},
\end{equation*} the energy flux is
\begin{equation}\label{flux}
	f(\zeta, q, \underline{P})= q\Big( \zeta + \frac{\underline{P}}{\eps}+ \eps\frac{q^2}{2h^2} -\mu\frac{\partial_x \partial_t q}{3h}  \Big)
\end{equation}and the remainder term is 
\begin{equation}\label{R}
	R(\zeta, q)=\frac{(\partial_x q)^3}{6h^2}+\frac{q\partial_x \zeta\partial_t \partial_x q}{3h^2}.
\end{equation}
Under the usual assumption of constant surface pressure for water waves, we have $\underline{P}=0$. Then, the nonlinear shallow water equations ($\mu=0$) exhibit \emph{exact} local energy conservation, 
while the Boussinesq-Abbott equations ($\mu\neq 0$) \emph{almost} conserve energy, in the sense that conservation holds up to terms of order $O(\eps\mu)$, 
which in the weakly nonlinear regime $\eps=O(\mu)$ have the same order $O(\mu^2)$ as the precision of the approximated model. Note that some terms in the energy density and flux can be replaced by equivalent terms up to remainder terms of order $O(\eps^2, \eps\mu)=O(\mu^2)$ in the weakly nonlinear regime, for instance
$$\mu \frac{(\partial_x q)^2}{6h}= \mu \frac{(\partial_x q)^2}{6} + O(\eps \mu),$$ without modifying the structure of the energy balance. Nevertheless, here we work with \eqref{energy-balance} as it is an exact equation derived from \eqref{dimless-shallow}. We refer to \cite{BreLanMet21} for a different Boussinesq system, formally equivalent to \eqref{dimless-shallow}, which exactly conserves a different energy that, unlike $e(\zeta, q)$, is not the asymptotic expansion of the energy of the full water waves equations (see \cite[Section 6.3.1]{LanBook}).\\
If \eqref{dimless-shallow} is set in $(a,b)\subseteq \mathbb{R}$ and $f$ is a piecewise continuous function, integrating \eqref{energy-balance} over $(a,b)$ yields the global energy balance 
\begin{equation}\label{glo-energybal}
	\frac{dE}{dt}  +  f(b^-)- f(a^+) - \sum_{i=1}^{N-1} (f(x_i^+)- f(x^-_{i})) = \int_a^b \frac{\underline P \partial_x q}{\eps} + \eps\mu  \int_a^b R,
\end{equation}where \begin{equation}\label{fluid-energy}E(\zeta, q)=E_{\rm gra}(\zeta) + E_{\rm kin }(\zeta, q)= \int_a^b e_{\rm gra}(\zeta) + \int_a^b e_{\rm kin}(\zeta, q)\end{equation} is the total fluid energy, $x_1,\dots, x_{N-1}$ are the (possible) discontinuity points of $f$ and $f(c^\pm)= \lim_{x\rightarrow c^{\pm}}f(x)$. For regular solutions and in the absence of partially immersed objects, $f$ is continuous in $(a,b)$ and the surface pressure deviation $\underline{P}=0$. Thus, \eqref{glo-energybal} yields
\begin{equation}\label{energy-conservation}
	\frac{dE}{dt} = \eps\mu  \int_a^b R,
\end{equation}showing energy conservation at order $O(\eps\mu)$ for \eqref{dimless-shallow}: exact energy conservation for the nonlinear shallow water equations and almost energy conservation for the Boussinesq-Abbott equations. 
We will see in Section \ref{sec-constramodels} that this is no longer the case when considering \eqref{dimless-shallow} in the presence of an OWC device. This is due to two main facts: 
\begin{itemize}
	\item The energy flux ceases to be continuous at the walls of the immersed structure;\smallskip
	
	\item The surface pressure deviation does not vanish inside the chamber.
\end{itemize}
The first property is due to the vertical orientation of the walls, where contact with waves occurs, and it also manifests for other partially immersed structures, such as heaving buoys. 
The second property is instead specific to the OWC device, as it is related to the dynamics of the air partially trapped inside the chamber and whose pressure deviates from the atmospheric pressure as waves enter the chamber. Before analyzing the wave-structure interaction problem, the next section is devoted to the modeling of the air pressure in the chamber and, in particular, to its coupling with the fluid beneath it.

\section{Air pressure dynamics in the chamber}\label{sec-airpressure}
We discuss here the air pressure inside the OWC chamber, which from now on is assumed to be uniform in space. Let us first consider the case in which the chamber is completely closed at the top and the air is trapped inside. The reason why we study this idealized scenario will be explained later in Section \ref{sec-constramodels}, where we reformulate the constrained shallow water models as transmission problems between the open water and the chamber. In this scenario, we consider the air motion as a generic polytropic process, hence the air pressure and volume satisfy
\begin{equation}\label{polytropic}
	\frac{d}{dt} \left(P_{\rm air}V^\gamma_{\rm air}\right)= 0,
\end{equation}
where $\gamma>0$ is the polytropic index of the air. Following \cite{Par23}, we assume that the relaxation time of the air pressure is negligible, so that the deformation process is fast and there is no heat transfer. For this type of adiabatic process, $\gamma$ is equal to the heat capacity ratio ($\gamma = 1.4$ for air). At rest, the air pressure is equal to the atmospheric pressure $P_{\rm atm}$ and the air volume is $V_0= |\mathcal{E}_+| h_{\rm ch}$, where $h_{\rm ch}>0$ is the height of the region inside the chamber filled by air at rest (called ``chamber height'' in \cite[Section 2]{BocHeVer23}). We then obtain from \eqref{polytropic} the adiabatic law
\begin{equation}\label{adiabatic}
	P_{\rm air}(t)V^\gamma_{\rm air}(t)= P_{\rm atm }|\mathcal{E}_+|^\gamma h_{\rm ch}^\gamma.
\end{equation}
Denoting by $\overline{\zeta}$ the fluid mean surface elevation in the chamber, 
we have that $V_{\rm air}(t)= |\mathcal{E}_+|\left(h_{\rm ch}-\overline{\zeta}(t) \right) $ and  
\eqref{adiabatic} can be written in terms of $\overline{\zeta}$ as
\begin{equation}\label{airpres-adiabatic}
	P_{\rm air}(t)= P_{\rm atm} \Big( \frac{h_{\rm ch}}{h_{\rm ch} - \overline{\zeta}(t)}\Big)^\gamma.
\end{equation}Assuming that the oscillations of the mean surface elevation are small compared to the chamber height, that is, $\overline{\zeta}\ll h_{\rm ch}$, \eqref{airpres-adiabatic} can be linearly approximated an the air pressure deviation from atmospheric pressure reads
\begin{equation}\label{Pch-closed}
	P_{\rm ch}(t)= P_{\rm air}(t)- P_{\rm atm} = \frac{\gamma P_{\rm atm }}{h_{\rm ch}}\overline{\zeta}(t).
\end{equation}

In a real OWC device, the chamber is partially closed at the top with opening width $b>0$, where the turbine is installed, and one has to take into account both the air chamber thermodynamics and the turbine aerodynamics. Considering the specific Wells turbines \cite{Rag95}, which are linear power take-off systems, and following \cite{Fal16,Dima17}, under the assumptions that $\overline{\zeta}\ll h_{\rm ch}$ and $P_{\rm ch} \ll P_{\rm atm}$, the evolution of the air pressure deviation is approximated by the first-order linear ODE 
\begin{equation}\label{Pch-dyn}
	\frac{d P_{\rm ch}}{dt} +k P_{\rm ch} = 	\frac{\gamma P_{\rm atm }}{h_{\rm ch}}\frac{d\overline{\zeta}}{dt},
\end{equation}
considered also in \cite{BocHeVer23}. Here, $k>0$ is a damping constant proportional to the ratio $b/h_{\rm ch}$ and to the inverse of the turbine rotational speed. After imposing the additional condition that at initial time $P_{\rm ch}$ and $\overline{\zeta}$ are related through \eqref{Pch-closed}, the solution to \eqref{Pch-dyn} is explicitly given by
\begin{equation}\label{Pch-dyn0}
	P_{\rm ch}(t) = \frac{\gamma P_{\rm atm }}{h_{\rm ch}}\left(\overline{\zeta}(t) - k \int_{0}^t e^{-k (t-s)}\overline{\zeta}(s)ds\right).
\end{equation} 	Note that the case when $k$ is negligible, that is, when structural damping at the turbine power take-off is negligible (called \emph{non-damped} scenario in \cite{BocHeVer23}), is mathematically equivalent to the case when the chamber is completely closed ($b=0$), as \eqref{Pch-dyn0} reduces to \eqref{Pch-closed}.\\	
Passing to the dimensionless form (see Appendix \ref{appsec-air}), \eqref{Pch-dyn} reads 
\begin{equation}\label{evo-Pch-dimless}
	\dfrac{dP_{\rm ch}}{dt} + \kappa P_{\rm ch} = \epsilon \frac{d \overline{\zeta}}{dt}, \qquad P_{\rm ch}(0)=\epsilon \overline\zeta(0),
\end{equation}where $\kappa=\frac{kL}{\sqrt{gh_0}}$ is the dimensionless damping constant and 
\begin{equation}\label{epsilon}
	\epsilon=\frac{a}{h_{\rm ch}}\in (0,1]
\end{equation}
is the compression parameter, measuring the ratio og the characteristic wave amplitude to the chamber height. The unique solution to \eqref{evo-Pch-dimless} is given by
\begin{equation}\label{Pch-dimless}
	P_{\rm ch}(t)= \epsilon \left(I - \kappa\mathcal{D}\right)\overline{\zeta}(t),
\end{equation}where $I$ is the identity operator and $\mathcal{D}$ is the delay operator defined by
\begin{equation}\label{delay}
	\mathcal{D}\overline{\zeta}(t)= \int_{0}^t e^{-\kappa (t-s)}\overline{\zeta}(s) ds.
\end{equation}

\section{Constrained shallow water models}\label{sec-constramodels}
In this section, we derive nonlinear models for the dynamics of OWCs starting from the shallow water models presented in Section \ref{sec-shallow}. Dealing with onshore devices interacting with waves coming from offshore, we consider \eqref{dimless-shallow} in the horizontal domain $(-\infty, \ell)$, with $x=\ell>0$ being the horizontal position of the shoreline, that is,
\begin{equation}\label{dimless-shallow-set}
	\begin{cases}
		\partial_t \zeta + \partial_x q=0 \\[5pt]
		\Big(1-\dfrac{\mu}{3}\partial_x^2\Big) \partial_t q + \eps \partial_x \Big(\dfrac{q^2}{h}\Big) + h \partial_x \zeta=  -\dfrac{h}{\eps}\partial_x{\underline{P}}\\[5pt]
	\end{cases} \quad\text{in} \quad(-\infty, \ell)
\end{equation}
complemented with the rest far-field condition and wall condition at the shoreline
\begin{equation}\label{bound-cond}
	(\zeta, q)\rightarrow(0,0) \quad \text{as}\quad x\rightarrow -\infty \qquad \text{and}\qquad q=0 \quad \text{at} \quad x=\ell.
\end{equation}
The OWC chamber is located at the right endpoint of the domain and its entrance is preceded by a partially immersed structure with vertical walls located at $x=\pm r$, with $0<r< \ell$. The presence of the object makes $(-\infty, \ell)$ naturally split into two disjoint subdomains: the \emph{interior domain} $\mathcal{I}=(-r,r)$, which is the horizontal projection of the fluid region under the structure, and the \emph{exterior domain} $\mathcal{E}= (-\infty, \ell)\setminus \mathcal{I}$. We refer to the intersection $\overline{\mathcal{E}}\cap\overline{\mathcal{I}}= \{ \pm r\}$ as the \emph{contact points}: rigorously speaking, they are the horizontal projections of the points where waves touch the walls of the structure. In addition, the exterior domain has two connected components: the \emph{open water domain} $\mathcal{E}_-=(-\infty, -r)$ and the \emph{chamber domain} $\mathcal{E}_+=(r, \ell)$.\\
It is well-known \cite{Lan17} that the interaction between waves and partially immersed structures can be mathematically described by fluid models with a contact constraint on the surface elevation, which must match the profile of the solid bottom. This is an example of a congestion phenomenon in fluid models (see \cite{Per18}). Assuming that the solid bottom is parametrized by the dimensionless function $\zeta_{\rm w}(x)$, we then impose the constraint
\begin{equation}\label{contact-con}
	\zeta=\zeta_{\rm w} \quad \mbox{in} \quad \mathcal{I}.
\end{equation}
On the contrary, the surface pressure $\underline{P}$ beneath the structure becomes an unknown of the problem and it plays the role of a Lagrange multiplier associated with \eqref{contact-con}. Since the structure is fixed, we have that $\partial_t \zeta_{\rm w}=0$ and the equations in the interior domain read
\begin{equation*}\label{int-dynamics}
	\begin{cases}
		q=q_i(t) \\[5pt]
		\dfrac{dq_i}{dt} - \eps \dfrac{q_i^2}{h_{\rm w}^2}\dfrac{dh_{\rm w}}{dx} + h_{\rm w} \dfrac{d \zeta_{\rm w}}{dx}=  -\dfrac{h_{\rm w}}{\eps}\partial_x{\underline{P}}\\[5pt]
	\end{cases} \quad \text{in} \quad \mathcal{I}.
\end{equation*}
Note that the dispersive contribution in the momentum equation disappears as the interior discharge depends only on time. For simplicity, we consider a partially immersed structure with a flat bottom, so that the previous system reduces to 
\begin{equation}\label{NSW-interior}
	\begin{cases}
		q=q_i(t) \\[5pt]
		\dfrac{dq_i}{dt}=  -\dfrac{h_{\rm w}}{\eps}\partial_x{\underline{P}}\\[5pt]
	\end{cases} \quad \text{in} \quad \mathcal{I}.
\end{equation}
In the exterior domain, it is usually assumed that the surface pressure matches the air pressure. For boat or buoy configurations, the air above the fluid is not trapped and there is no deviation from atmospheric pressure, that is, $\underline{P}=0$ in $\mathcal{E}$. In contrast, for OWC devices, we must distinguish between the open water and chamber domains. We then impose that 
\begin{equation}\label{surpres-ext}
	\underline{P}=0 \quad \text{in} \quad \mathcal{E}_- \qquad \text{and}\qquad \underline{P}=P_{\rm ch}\quad \text{in} \quad \mathcal{E}_+,
\end{equation}
where $P_{\rm ch}$ is the air pressure deviation discussed in Section \ref{sec-airpressure}. 
The non-zero pressure constraint in the chamber introduces additional congestion in the modeling but, since the air pressure is assumed to be uniform in space, the fluid equations are not directly affected by the new constraint, as the source term in the momentum equation of \eqref{dimless-shallow-set} vanishes in both $\mathcal{E}_\pm$, like for boat/buoy configurations. 
Nevertheless, it will play a key role in the derivation of a coupling condition between the interior and the exterior equations allowing us to reformulate \eqref{dimless-shallow-set}, \eqref{contact-con} and  \eqref{surpres-ext} as transmission problems between $\mathcal{E}_-$ and $\mathcal{E}_+$ across the contact points $x=\pm r$.\\ 
After writing \eqref{surpres-ext} in dimensionless form (see Appendix \ref{appsec-air}) and combining it with \eqref{Pch-dimless}, we find 
\begin{equation}\label{pressure-mean}
	\underline{P}=0 \quad \text{in} \quad \mathcal{E}_-, \qquad
	\underline{P}=\eps P_0 \left(I -\kappa \mathcal{D} \right)\overline{\zeta}\quad \text{in} \quad \mathcal{E}_+
\end{equation}with $P_0= \frac{\gamma P_{\rm atm}}{\rho g h_{\rm ch}}$, where the fluid mean surface elevation in the chamber is defined by 
 \begin{equation}\label{mean-sur} \overline{\zeta}=\frac{1}{|\mathcal{E}_+|}\int_{\mathcal{E}_+}\zeta(x)dx, \end{equation}
and in the second equality we have used the relation $\frac{\epsilon}{h_0}= \frac{\eps}{h_{\rm ch}}$. After injecting \eqref{pressure-mean} into \eqref{dimless-shallow-set}, the equations in the exterior domain read
\begin{equation}\label{NSW-exterior}
	\begin{cases}
		\partial_t \zeta + \partial_x q=0 \\[5pt]
		\Big(1-\dfrac{\mu}{3}\partial_x^2\Big) \partial_t q + \eps \partial_x \Big(\dfrac{q^2}{h}\Big) + h \partial_x \zeta=0
	\end{cases}\quad \text{in}\quad \mathcal{E}.
\end{equation} The presence of an OWC introduces a \emph{compressible-incompressible} transition between the exterior and interior domains, in the sense that \eqref{NSW-exterior} (with $\mu=0$) has the same form as the isentropic compressible Euler equations, whereas \eqref{NSW-interior} must be treated as an incompressible system associated with the constraint \eqref{contact-con}. In fact, this dichotomy holds at the level of the full water waves equations \cite{Lan17} and is inherited by the shallow water reduced models.

\subsection{Coupled problems between the exterior and interior domains}
So far, starting from \eqref{dimless-shallow-set}, we have derived two systems of equations, \eqref{NSW-interior} and \eqref{NSW-exterior}, that separately govern the fluid dynamics in the interior and exterior domains. In order to derive a closed formulation of the original problem, both systems must be complemented with boundary conditions, which necessarily lead to coupling conditions at the contact points. To this end, in the spirit of \cite{BecLan22}, we introduce the following properties:
\smallskip
\begin{enumerate}
	\item[(i)] The fluid discharge is continuous at the contact points;\smallskip
	\item[(ii)] When the chamber is closed (or in the non-damped scenario), the total energy of the system is conserved at order $O(\eps\mu)$: that is, it is exactly conserved when $\mu=0$ and almost conserved when $\mu\neq 0$.\smallskip
\end{enumerate}

Property (i) is deduced from the slip condition for the fluid velocity on the solid boundary that complements the water waves equations in the presence of a partially immersed object (see \cite{Lan17}).\\
Property (ii) is less direct and deserves further attention. In fact, in the case of a structure with non-vertical walls and under the assumption of constant air pressure, the continuity of the surface elevation at the contact points
\begin{equation}\label{surface-con}
	\zeta_{|_{\mathcal{E}}}= \zeta_{\rm w} \quad \text{at} \quad x=\pm r
\end{equation}
implies the continuity of the surface pressure
\begin{equation} \label{surpre-con}
	\underline{P}_{|_{\mathcal{I}}}= 0 \quad \text{at} \quad x=\pm r,
\end{equation} which allows one to fully determine the surface pressure in the interior domain from \eqref{NSW-interior}.
The continuity of the fluid surface, discharge and surface pressure then yields the continuity of the energy flux \eqref{flux} at the contact points. When the structure is fixed, the first term on the right-hand side of \eqref{glo-energybal} vanishes due to \eqref{NSW-interior} and \eqref{surpre-con} and we recover the conservation \eqref{energy-conservation}  of the fluid energy \eqref{fluid-energy} (with $(a,b)=(-\infty, \ell)$) like in the absence of structures. When the structure moves vertically \cite{Lan17}, the first term on the right-hand side of \eqref{glo-energybal} does not vanish, as it depends on the solid motion, and we obtain from \eqref{glo-energybal} that the conserved energy is the sum of the fluid and the solid mechanical energies.\\
In the case of a structure with vertical walls, the continuity of the fluid surface \eqref{surface-con} and surface pressure \eqref{surpre-con} at the contact points no longer holds. However, the principle of conservation of total energy remains valid and we assume it directly. Although the structure is fixed, the total energy of the system does not coincide with the fluid energy \eqref{fluid-energy}, contrary to the first approach developed in \cite{BocHeVer21}. Indeed, the external forces acting on the fluid are not limited to the gravitational force, but also include the force due to the air pressure variation in the chamber. Since we assume a spatially uniform air pressure, this pressure force reads
\begin{equation*}
	F_{\rm pres}= - \int_{\mathcal{E}_+} P_{\rm ch}= -|\mathcal{E}_+|P_{\rm ch}.
\end{equation*}
To identify the appropriate notion of total energy, we examine the case where the chamber is completely closed or, equivalently, the non-damped scenario, both representing a physical situation with no energy loss at the turbine power take-off. Obviously, in a real OWC device, part of the energy is lost and transformed into mechanical energy of the turbine. In the idealized scenario, \eqref{Pch-closed} or \eqref{Pch-dyn0} with $k=0$ hold and we derive the Hooke's law 
\begin{equation}\label{spring-force}
	F_{\rm pres}= -K \overline{\zeta} \qquad \text{with}\quad K= \frac{\gamma P_{\rm atm}|\mathcal{E}_+|}{h_{\rm ch}}.
\end{equation}
Thus, the pressure force acting on the fluid inside the chamber plays the role of a spring force with stiffness $K$ that tends to bring the mean surface elevation $\overline\zeta$ back to the zero equilibrium level. In this sense, the fluid inside the chamber can be interpreted as a liquid piston that moves up and down subject to both gravitational and spring forces. Associated with Hooke's law \eqref{spring-force}, we introduce the dimensionless elastic potential energy (see Appendix \ref{appsec-elastic})
\begin{equation}\label{ela-energy-dimless}
	E_{\rm ela}(\zeta)=\frac{\tau}{2} \overline{\zeta}^2,
\end{equation}
with dimensionless stiffness $\tau=P_0|\mathcal{E}_+|$. We stress that the elastic potential energy $E_{\rm ela}$ rigorously justifies the OWC energy $E_{\rm OWC}$ introduced in \cite{BocHeVer23}. Indeed, in the non-damped scenario, combining \eqref{Pch-dyn0} with the dimensional form of \eqref{ela-energy-dimless} (see Appendix \ref{appsec-elastic}) yields
$$E_{\rm ela}(\zeta)= \frac{1}{2} \frac{\gamma P_{\rm atm}|\mathcal{E}_+|}{h_{\rm ch}} \overline\zeta^2= \frac{|\mathcal{E}_+| h_{\rm ch}}{2\gamma P_{\rm atm}}P_{\rm ch}^2= \frac{1}{2\gamma_2}P_{\rm ch}^2= E_{\rm OWC}.$$
We are now ready to introduce the definition of total energy of the system:
\begin{definition}\label{def-totalenergy} 
	The \emph{total energy} of the constrained shallow water models \eqref{dimless-shallow-set}-\eqref{contact-con} and \eqref{pressure-mean} is
	\begin{equation}\label{total-energy}
		E_{\rm tot}(\zeta,q)=E(\zeta,q)+ E_{\rm ela }(\zeta),
	\end{equation}with $E$ and $E_{\rm ela}$ respectively as in \eqref{fluid-energy} and \eqref{ela-energy-dimless}.
\end{definition}

In view of \eqref{NSW-interior} and \eqref{total-energy}, the properties (i)-(ii) can be then  made explicit as follows:

\begin{assumption}\label{assumption}The following properties hold:
	\begin{equation}
		\tag{i}  \label{discharge-con}
		q_{|_{\mathcal{E}}} =q_i\quad \text{at} \quad x=\pm r;
	\end{equation}
	\begin{equation}
		\tag{ii} \label{tot-ene-conservation} 
		\text{when $\kappa=0$ in \eqref{pressure-mean},} \quad  
		\frac{d}{dt}E_{\rm tot} = \eps\mu \int_{-\infty}^\ell R = O(\eps\mu),
	\end{equation} with $R$ as in \eqref{R}. 
	
\end{assumption}

Our next goal is to derive an explicit coupling condition at the contact points from item \eqref{tot-ene-conservation} of Assumption \ref{assumption}. To this end, let us first introduce some notation. For a function $F$ defined in $(-\infty, \ell)$, we denote the jump of $F$ between $\mathcal{E}$ and $\mathcal{I}$ at the contact points $x= \pm r$ by $$\llbracket F\rrbracket^*_{\pm r}= F_{|_{\mathcal{E}}}(\pm r) - F_{|_{\mathcal{I}}}(\pm r). $$ For a function $G$ defined either in $\mathcal{I}$ or in $\mathcal{E}$, we denote the jump and (arithmetic) mean of $G$ across the contact points respectively by 
\begin{equation}\label{jump-mean}
	\llbracket G \rrbracket = G(r)- G(-r) \qquad \text{and}\qquad \langle G\rangle= \frac{G(r)+G(-r)}{2}.
\end{equation}In particular, we have the equivalence 
\begin{equation}\label{equiv-brackets}
	\left\llbracket F\right\rrbracket^*_{-r}=\left \llbracket F\right \rrbracket^*_{r} \ \iff \	\left\llbracket F_{|_{\mathcal{I}}}\right\rrbracket=  \left \llbracket F_{|_{\mathcal{E}}}\right\rrbracket.
\end{equation}

\begin{proposition}\label{prop-conserve-coup}
	Consider regular solutions $(\zeta, q)$ to the constrained shallow water models \eqref{dimless-shallow-set}-\eqref{contact-con} and \eqref{pressure-mean} with $\kappa=0$. Then, item \eqref{tot-ene-conservation} of Assumption \ref{assumption} holds if and only if
	\begin{equation}\label{f-con}
		\left\llbracket f\right\rrbracket^*_{-r}=\left \llbracket f\right \rrbracket^*_{r},
	\end{equation} with $f$ as in \eqref{flux}.
	In addition, using item \eqref{discharge-con} of Assumption \ref{assumption}, \eqref{f-con} reduces to
	\begin{equation}\label{bra-con}
		\left\llbracket \zeta + \frac{\underline{P}}{\eps} + \mathfrak{E}\right\rrbracket^*_{-r}  =\left \llbracket \zeta +\frac{\underline{P}}{\eps}+\mathfrak{E}\right \rrbracket^*_{r} 
	\end{equation}
	or, equivalently, 
	\begin{equation}\label{bra-con-equi}	\frac{\llbracket  \underline{P}_{|_{\mathcal{I}}}\rrbracket}{\eps}=  \left \llbracket \zeta_{|_{\mathcal{E}}} + \mathfrak{E}_{|_{\mathcal{E}}}+ \frac{\underline{P}_{|_{\mathcal{E}}}}{\eps}\right\rrbracket,
	\end{equation}	with the dimensionless shallow water dynamic pressure given by
	\begin{equation}\label{frak-E}
		\mathfrak{E}(\zeta, q)=  \eps  \frac{q^2}{2h^2}-\mu\frac{\partial_x\partial_t q}{3h}.
	\end{equation}
\end{proposition}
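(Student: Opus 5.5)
The plan is to integrate the local energy balance \eqref{energy-balance} separately over $\mathcal{E}_-=(-\infty,-r)$, $\mathcal{I}=(-r,r)$ and $\mathcal{E}_+=(r,\ell)$, and then add the three identities. This is exactly the global balance \eqref{glo-energybal} on $(-\infty,\ell)$ with discontinuity points $x_1=-r$, $x_2=r$. The boundary contributions at the ends vanish: at $-\infty$ by the far-field condition \eqref{bound-cond}, assuming enough decay of $\partial_x\partial_t q$ for regular solutions, and at $x=\ell$ because $q=0$ there and $f$ carries the factor $q$. The jump terms at the contact points are $\llbracket f\rrbracket^*_{-r}$ and $-\llbracket f\rrbracket^*_{r}$, up to the sign convention. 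This gives
\begin{equation*}
\frac{dE}{dt} + \llbracket f\rrbracket^*_{-r} - \llbracket f\rrbracket^*_{r} = \frac1\eps\int_{-\infty}^\ell \underline{P}\,\partial_x q + \eps\mu\int_{-\infty}^\ell R .
\end{equation*}

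Next I treat the pressure work term piece by piece. On $\mathcal{E}_-$ it vanishes since $\underline P=0$. On $\mathcal{I}$ it vanishes since $q=q_i(t)$ there, so $\partial_x q=0$. On $\mathcal{E}_+$, with $\kappa=0$, the pressure is constant in space, $\underline P=\eps P_0\overline\zeta$. The continuity equation gives $\partial_x q=-\partial_t\zeta$, hence
\begin{equation*}
\frac1\eps\int_{\mathcal{E}_+}\underline P\,\partial_x q=-P_0\overline\zeta\,\frac{d}{dt}\int_{\mathcal{E}_+}\zeta=-P_0|\mathcal{E}_+|\,\overline\zeta\,\frac{d\overline\zeta}{dt}=-\frac{d}{dt}E_{\rm ela}.
\end{equation*}
This uses $\tau=P_0|\mathcal{E}_+|$ and \eqref{mean-sur}. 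The same computation could be done with $\int_{\mathcal{E}_+}\partial_x q = q(\ell)-q(r)=-q(r)$, which is consistent. Substituting gives
\begin{equation*}
\frac{d}{dt}E_{\rm tot}=\eps\mu\int R+\llbracket f\rrbracket^*_{r}-\llbracket f\rrbracket^*_{-r},
\end{equation*}
and therefore item \eqref{tot-ene-conservation} holds if and only if \eqref{f-con} holds. This step is the only delicate one: I need the interior energy to be well defined and the remainder $R$ to be integrable across the contact points, and I need to handle carefully the sign bookkeeping of the jumps in \eqref{glo-energybal}.

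For the reduction, I write $f=q\,(\zeta+\underline P/\eps+\mathfrak{E})$ with $\mathfrak{E}$ as in \eqref{frak-E}. By item \eqref{discharge-con}, the factor $q$ takes the same value $q_i(t)$ on both sides at $x=\pm r$. Hence $\llbracket f\rrbracket^*_{\pm r}=q_i\,\llbracket \zeta+\underline P/\eps+\mathfrak{E}\rrbracket^*_{\pm r}$, so \eqref{f-con} is equivalent to \eqref{bra-con} when $q_i\neq0$. When $q_i=0$, \eqref{f-con} holds trivially; in that case I would state \eqref{bra-con} as the sufficient coupling condition to be adopted, and interpret "reduces to" in this sense.

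Finally, applying \eqref{equiv-brackets} to $F=\zeta+\underline P/\eps+\mathfrak{E}$ turns \eqref{bra-con} into an equality of jumps over $\mathcal{I}$ and over $\mathcal{E}$. In $\mathcal{I}$, $\zeta=\zeta_{\rm w}$ is constant because the bottom is flat, so its jump vanishes. Also $\mathfrak{E}_{|_{\mathcal I}}=\eps q_i^2/(2h_{\rm w}^2)$ is constant, since $\partial_x\partial_t q=0$ and $h_{\rm w}$ is constant, so its jump vanishes as well. Only $\llbracket\underline P_{|_{\mathcal I}}\rrbracket/\eps$ survives, which yields \eqref{bra-con-equi}.
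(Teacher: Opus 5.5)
Your proposal is correct and follows the paper's proof essentially step by step. Both integrate \eqref{energy-balance} across the contact points $x=\pm r$, discard the pressure work on $\mathcal{E}_-\cup\mathcal{I}$, and identify the work on $\mathcal{E}_+$ as $-\frac{d}{dt}E_{\rm ela}$ via the continuity equation. Both then factor $f=q(\zeta+\underline{P}/\eps+\mathfrak{E})$ with $q=q_i$ at $x=\pm r$, and finish with \eqref{equiv-brackets} and the constancy of $\zeta_{|_{\mathcal{I}}}+\mathfrak{E}_{|_{\mathcal{I}}}$. Your observation that passing from \eqref{f-con} to \eqref{bra-con} requires dividing by $q_i$, so that \eqref{bra-con} is only a sufficient condition when $q_i=0$, is a legitimate refinement that the paper leaves implicit.
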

\begin{proof}
	Combining \eqref{glo-energybal} for $(a,b)=(-\infty, \ell)$ with \eqref{NSW-interior}, \eqref{pressure-mean}, \eqref{NSW-exterior} and \eqref{ela-energy-dimless}, and using the far-field and wall conditions \eqref{bound-cond}, we obtain 
	\begin{equation*}\begin{aligned}
			&\frac{dE}{dt}  -\llbracket f\rrbracket^*_{r}+ \llbracket f \rrbracket^*_{-r}=   -P_0(I-\kappa \mathcal{D}) \overline\zeta \int_{\mathcal{E}_+}\partial_t \zeta+ \eps \mu \int_{-\infty}^\ell R
			\\[5pt]&=-\frac{d}{dt}\Big(\frac{\tau}{2}\overline\zeta^2\Big) + \tau\kappa\mathcal{D}\overline\zeta \frac{d\overline\zeta}{dt} + \eps\mu \int_{-\infty}^\ell R= -\frac{dE_{\rm ela}}{dt}+ \tau\kappa\mathcal{D}\overline\zeta \frac{d\overline\zeta}{dt}  + \eps\mu \int_{-\infty}^\ell R.
		\end{aligned}
	\end{equation*} Using \eqref{total-energy}, we deduce that
	\begin{equation*}
		\frac{dE_{\rm tot}}{dt}= \llbracket f\rrbracket^*_{r} - \llbracket f\rrbracket^*_{-r} + \tau\kappa\mathcal{D}\overline\zeta \frac{d\overline\zeta}{dt}+ \eps\mu \int_{-\infty}^{\ell}R
	\end{equation*} and, invoking item \eqref{tot-ene-conservation} of Assumption \ref{assumption}, we find \eqref{f-con}. Recalling the definition of the energy flux \eqref{flux} and combining item \eqref{discharge-con} of Assumption \ref{assumption} with \eqref{NSW-interior}, we obtain \eqref{bra-con}. Furthermore, using \eqref{equiv-brackets}, we obtain \eqref{bra-con-equi} after observing that \eqref{NSW-interior} implies $\llbracket \zeta_{|_{\mathcal{I}}} + \mathfrak{E}_{|_{\mathcal{I}}}\rrbracket=0$.
\end{proof} 
An immediate consequence of Proposition \ref{prop-conserve-coup} is the following conditional result for energy conservation at order $O(\eps\mu)$, involving shallow water analogues of Bernoulli's principle.
\begin{proposition} \label{prop-intpres-con}
Considering the shallow water Bernoulli principles at the contact points
	\begin{align}
		\label{bernoulli-left}	&\zeta_{\rm w} +\frac{\underline{P}_{|_{\mathcal{I}}}}{\eps}+ \eps\frac{q_i^2}{2h_{\rm w}^2}= \zeta_{|_{\mathcal{E}}} 
		+\mathfrak{E}_{|_{\mathcal{E}}}  
		&\quad \text{at} \quad x=-r, \\[5pt]\label{bernoulli-right}	
		&\zeta_{\rm w} +\frac{\underline{P}_{|_{\mathcal{I}}}}{\eps}+ \eps\frac{q_i^2}{2h_{\rm w}^2}= \zeta_{|_{\mathcal{E}}}  +P_0(I - \kappa \mathcal{D})\overline{\zeta} 
		+\mathfrak{E}_{|_{\mathcal{E}}} 
		&\quad \text{at} \quad x= r,
	\end{align}with $\mathfrak{E}$ as in \eqref{frak-E},
	regular solutions to the constrained shallow water models \eqref{dimless-shallow-set}-\eqref{contact-con} and \eqref{pressure-mean} with $\kappa=0$, conserve the total energy \eqref{total-energy} at order $O(\eps\mu)$. 
\end{proposition}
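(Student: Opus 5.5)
The plan is to reduce Proposition \ref{prop-intpres-con} to Proposition \ref{prop-conserve-coup}. That result shows that, for regular solutions with $\kappa=0$ satisfying the discharge continuity \eqref{discharge-con}, the conservation property \eqref{tot-ene-conservation} is equivalent to the flux jump condition \eqref{f-con}, and further to \eqref{bra-con}. It therefore suffices to show that the Bernoulli relations \eqref{bernoulli-left}--\eqref{bernoulli-right} imply \eqref{bra-con}. I will take continuity of the discharge at $x=\pm r$ as part of the constrained model, as in Assumption \ref{assumption}.

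The first step is to evaluate the interior traces. In $\mathcal{I}$ we have $q=q_i(t)$ and $\zeta=\zeta_{\rm w}$ with $h=h_{\rm w}$ constant (flat bottom). Since $\partial_x q=0$ there, $\mathfrak{E}_{|_{\mathcal{I}}}=\eps q_i^2/(2h_{\rm w}^2)$. Hence the left-hand sides of \eqref{bernoulli-left} and \eqref{bernoulli-right} are exactly $\big(\zeta+\underline{P}/\eps+\mathfrak{E}\big)_{|_{\mathcal{I}}}$ at $x=-r$ and $x=r$.

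The second step is to evaluate the exterior traces using \eqref{pressure-mean}. At $x=-r$ we have $\underline{P}_{|_{\mathcal{E}}}=0$. At $x=r$ we have $\underline{P}_{|_{\mathcal{E}}}/\eps=P_0(I-\kappa\mathcal{D})\overline\zeta$, which equals $P_0\overline\zeta$ when $\kappa=0$. The right-hand sides of the Bernoulli relations are therefore $\big(\zeta+\underline{P}/\eps+\mathfrak{E}\big)_{|_{\mathcal{E}}}$ at $x=\mp r$.

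It follows that \eqref{bernoulli-left}--\eqref{bernoulli-right} say precisely
\[
\left\llbracket \zeta+\frac{\underline{P}}{\eps}+\mathfrak{E}\right\rrbracket^*_{-r}=0=\left\llbracket \zeta+\frac{\underline{P}}{\eps}+\mathfrak{E}\right\rrbracket^*_{r},
\]
which is stronger than \eqref{bra-con}. Multiplying each jump by the common discharge trace $q_i$, using \eqref{discharge-con}, gives $\llbracket f\rrbracket^*_{\pm r}=0$ and so \eqref{f-con}. Proposition \ref{prop-conserve-coup} then yields $\frac{d}{dt}E_{\rm tot}=\eps\mu\int_{-\infty}^\ell R$, i.e.\ conservation at order $O(\eps\mu)$.

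The argument is essentially bookkeeping, and I expect no real obstacle. The only point needing care is the interior trace of $\mathfrak{E}$, in particular that its dispersive part vanishes because $\partial_x q_i=0$. One must also read the flux in $\mathcal{I}$ correctly: there $f=q_i\big(\zeta_{\rm w}+\underline{P}_{|_{\mathcal{I}}}/\eps+\eps q_i^2/(2h_{\rm w}^2)\big)$, so its traces are determined by the interior pressure traces.
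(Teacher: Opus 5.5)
Your proposal is correct and follows essentially the same route as the paper. You identify the two sides of the Bernoulli relations with the interior and exterior traces of $\zeta+\underline{P}/\eps+\mathfrak{E}$, using $\mathfrak{E}_{|_{\mathcal{I}}}=\eps q_i^2/(2h_{\rm w}^2)$, deduce that both starred jumps vanish, and then invoke Proposition \ref{prop-conserve-coup}; the only difference is that you pass explicitly through \eqref{f-con} by multiplying by $q_i$, while the paper stops at \eqref{bra-con}, which is equivalent to \eqref{f-con} under item \eqref{discharge-con}.
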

\begin{proof}Using \eqref{contact-con}-\eqref{NSW-interior} and \eqref{pressure-mean}, the jumps at the contact points in \eqref{bra-con} respectively read
	\begin{equation*}
		\left\llbracket \zeta+ \frac{\underline{P}}{\eps} +\mathfrak{E} \right\rrbracket^*_{-r}=	\zeta_{|_{\mathcal{E}}}(-r) + \mathfrak{E}_{|_{\mathcal{E}}} (-r) - \zeta_{\rm w}(-r)-\eps \frac{q_i^2}{2h_{\rm w}^2(-r)}   - \frac{\underline{P}_{|_{\mathcal{I}}}(-r)}{\eps}
	\end{equation*}and 
	\begin{equation*}
		\left \llbracket \zeta+\frac{\underline{P}}{\eps} +\mathfrak{E} \right \rrbracket^*_{r} =	\zeta_{|_{\mathcal{E}}}(r)+ P_0(I - \kappa \mathcal{D})\overline{\zeta} + \mathfrak{E}_{|_{\mathcal{E}}} (r)- \zeta_{\rm w}(r)  - \eps \frac{q_i^2}{2h_{\rm w}^2(r)}  - \frac{\underline{P}_{|_{\mathcal{I}}}(r)}{\eps},
	\end{equation*}
	where we have used that \eqref{frak-E} in the interior domain reduces to \begin{equation} \label{frakE-red} \mathfrak{E}_{|_{\mathcal{I}}}= \eps\frac{q_i^2}{2h_{\rm w}^2}\end{equation}since the dispersive term vanishes as $\partial_x\partial_t q_i = 0$. Therefore, if \eqref{bernoulli-left}-\eqref{bernoulli-right} are satisfied, then 
	\begin{equation*}
		\left\llbracket \zeta+ \frac{\underline{P}}{\eps}  +\mathfrak{E}\right\rrbracket^*_{-r}= 0=	\left\llbracket \zeta + \frac{\underline{P}}{\eps}+\mathfrak{E} \right\rrbracket^*_{r}
	\end{equation*}and 
	\eqref{bra-con} holds. Thanks to Proposition \ref{prop-conserve-coup}, we conclude that the total energy is conserved at order $O(\eps\mu)$.
\end{proof}
\begin{remark}
	Note that in the symmetric case $\zeta_{\rm w}(-r)=\zeta_{\rm w}(r)$, which holds for a solid with flat bottom, in order to satisfy \eqref{bra-con} one can also consider the coupling conditions
	\begin{equation}\begin{aligned}\label{pre-boundary-sym}
			\frac{\underline{P}_{|_{\mathcal{I}}}}{\eps}= \zeta_{|_{\mathcal{E}}} 
			+\mathfrak{E}_{|_{\mathcal{E}}}  
			\quad \text{at} \quad x=-r, \qquad 
			\frac{\underline{P}_{|_{\mathcal{I}}}}{\eps}= P_0(I - \kappa \mathcal{D})\overline{\zeta} +\zeta_{|_{\mathcal{E}}} 
			+\mathfrak{E}_{|_{\mathcal{E}}}  
			\quad \text{at} \quad x= r.
		\end{aligned}
	\end{equation}
	However, we choose to consider \eqref{bernoulli-left}-\eqref{bernoulli-right} since they hold in the general non-flat case and, most importantly, are expressed in the form of Bernoulli's principles, with the sum of the static pressure $\zeta + \frac{\underline{P}}{\eps}$ and the shallow water dynamic pressure $\mathfrak{E}$ appearing explicitly.
\end{remark}

Shallow water Bernoulli principles \eqref{bernoulli-left}-\eqref{bernoulli-right} provide boundary conditions for the surface pressure in the interior domain at the contact points analogous to those derived in \cite[Corollary 2.1]{BecLan22} for a vertically moving solid. They take the form of the sum of the surface pressure deviation in the exterior domain and the jumps of the surface elevation and the shallow water dynamic pressure between the exterior and interior domains. In \cite{BecLan22}, the dispersive term in the dynamic pressure $\mathfrak{E}_{|_{\mathcal{I}}}$ does not vanish as it depends on the motion of the solid and the surface pressure deviation vanishes in the whole exterior domain. Here, instead, there is no dispersive term in \eqref{frakE-red} and $\underline{P}$ does not vanish in the chamber domain since it depends on the mean surface elevation $\overline{\zeta}$.\\
Differentiating the momentum equation in \eqref{NSW-interior} with respect to $x$ and using \eqref{bernoulli-left} and \eqref{bernoulli-right} allows us to fully determine the surface pressure in the interior domain. More precisely, it is the unique solution to the Dirichlet problem
\begin{equation*}
	\begin{cases}
		\partial_x^2\left(	\dfrac{	\underline{P}}{\eps}\right)=0  &\qquad\text{in} \quad \mathcal{I}\\[7pt]
		\dfrac{	\underline{P}}{\eps}= \zeta_{|_{\mathcal{E}}} -\zeta_{\rm w} + \mathfrak{E}_{|_{\mathcal{E}}}- \eps \dfrac{q_i^2}{2h_{\rm w}^2} \quad &\qquad\text{at} \quad x=-r\\[7pt]	\dfrac{	\underline{P}}{\eps}= P_0(I-\kappa \mathcal{D})\overline\zeta + \zeta_{|_{\mathcal{E}}}-\zeta_{\rm w} + \mathfrak{E}_{|_{\mathcal{E}}} - \eps \dfrac{q_i^2}{2h_{\rm w}^2}\quad &\qquad\text{at} \quad x=r.
	\end{cases}
\end{equation*}

\subsection{Transmission problems between the open water and the chamber}\label{subsec-transmission}
In the previous subsection we have reformulated the shallow water models in the presence of an OWC as the coupled problems \eqref{NSW-exterior}-\eqref{NSW-interior}  with coupling conditions given by item \eqref{discharge-con} of Assumption \ref{assumption} and \eqref{bra-con}. However, the fluid dynamics can also be formulated as transmission problems between $\mathcal{E}_-$ and $\mathcal{E}_+$ across the the contact points.
After combining \eqref{bra-con-equi} with \eqref{pressure-mean}, we have that 
\begin{equation}\label{jump-Pi}
	\frac{\llbracket\underline{P}_{|_{\mathcal{I}}}\rrbracket}{\eps} = \llbracket \zeta_{|_{\mathcal{E}}} + \mathfrak{E}_{|_{\mathcal{E}}} \rrbracket + P_0\left( I-\kappa \mathcal{D}\right)\overline{\zeta}.
\end{equation}
Integrating the momentum equation in \eqref{NSW-interior} over $\mathcal{I}$ and using \eqref{jump-Pi} then yields
\begin{equation}\label{evo-qi}
	\alpha	\frac{d q_i}{dt}=-  \llbracket \zeta_{|_{\mathcal{E}}} + \mathfrak{E}_{|_{\mathcal{E}}} \rrbracket - P_0(I -\kappa \mathcal{D})\overline{\zeta}
\end{equation}with $\alpha= 2r/h_{\rm w}$, relating the evolution of the interior discharge to the jump of the fluid unknowns in the exterior domain across the contact points and to the mean surface elevation in the chamber. Analogous evolution equations were derived in \cite{BecLan22, BocHeVer23}. Here, differently from \cite{BocHeVer23}, the shallow water dynamic pressure $\mathfrak{E}$ includes the dispersive contribution when $\mu\neq0$, whereas, differently from \cite{BecLan22}, the mean surface term appears due to the air pressure variation in the OWC chamber.
Combining item \eqref{discharge-con} of Assumption \ref{assumption} with \eqref{NSW-interior} then yields the transmission conditions for the discharge in the exterior domain
\begin{equation}\label{trans-con}
	\llbracket q_{|_{\mathcal{E}}} \rrbracket=0, \qquad \langle q_{|_{\mathcal{E}}}\rangle = q_i.
\end{equation}
Gathering \eqref{NSW-exterior}, \eqref{evo-qi} and \eqref{trans-con} together leads to the following transmission problem between the open water and the chamber domains across the contact points. 

Before stating it, we introduce the notation $\widetilde{\mu}=\sqrt{\dfrac{\mu}{3}}$ and, from now on, we use it instead of $\mu$ when writing dispersive terms.

\begin{proposition}\label{prop-transmission}
	Under Assumption \ref{assumption}, the constrained shallow water models \eqref{dimless-shallow-set}-\eqref{contact-con} and \eqref{pressure-mean} can be recast as the transmission problems
	\begin{equation}\label{trans-sys}
		\begin{cases}
			\partial_t \zeta + \partial_x q=0 \\[5pt]
			\Big(1-\widetilde{\mu}^2\partial_x^2\Big) \partial_t q + \eps \partial_x \Big(\dfrac{q^2}{h}\Big) + h \partial_x \zeta=0
		\end{cases}\quad \text{in}\quad \mathcal{E}
	\end{equation} with boundary conditions \eqref{bound-cond} and transmission conditions
	\begin{equation}\label{trans-conditions}
		\llbracket q \rrbracket=0, \qquad \langle q\rangle = q_i,
	\end{equation}where $\llbracket \cdot \rrbracket$ and $\langle \cdot \rangle$ defined in \eqref{jump-mean}, and $q_i$ satisfies the evolution equation \begin{equation}\label{evo-eq-qi}
		\alpha\frac{d q_i}{dt}=-  \llbracket \zeta + \mathfrak{E} \rrbracket - P_0(I-\kappa \mathcal{D})\overline{\zeta}
	\end{equation}
	with $\alpha=2r/h_{\rm w}$, $\mathfrak{E}$ as in \eqref{frak-E} and $\mathcal{D}$ as in \eqref{delay}.
\end{proposition}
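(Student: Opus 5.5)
The proof is essentially a bookkeeping argument that gathers what was derived in the previous subsection. I would show both directions of the equivalence: a regular solution of the constrained models under Assumption \ref{assumption} yields a solution of \eqref{trans-sys}--\eqref{evo-eq-qi}, and conversely.

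\emph{From the constrained model to the transmission problem.}
\begin{itemize}
\item In $\mathcal{E}_\pm$ the pressure \eqref{pressure-mean} is spatially constant, so $\partial_x\underline{P}=0$. The source term then drops out and we get \eqref{NSW-exterior}, which is \eqref{trans-sys} once $\mu/3$ is written as $\widetilde\mu^2$. The conditions \eqref{bound-cond} are untouched because they live entirely in $\mathcal{E}$.
\item In $\mathcal{I}$, the contact constraint \eqref{contact-con} and $\partial_t\zeta_{\rm w}=0$ give $q=q_i(t)$. Item \eqref{discharge-con} then says $q_{|_{\mathcal{E}}}(\pm r)=q_i$, which is exactly $\llbracket q\rrbracket=0$ and $\langle q\rangle=q_i$.
\item For the evolution law, apply Proposition \ref{prop-conserve-coup} in the closed/non-damped case: item \eqref{tot-ene-conservation} with $\kappa=0$ gives \eqref{bra-con-equi}. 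Insert \eqref{pressure-mean} into its last term: $\underline{P}_{|_{\mathcal{E}}}/\eps$ is $0$ at $-r$ and $P_0(I-\kappa\mathcal{D})\overline\zeta$ at $r$. This gives \eqref{jump-Pi}.
\item Integrate the interior momentum equation in \eqref{NSW-interior} over $\mathcal{I}=(-r,r)$, using that $h_{\rm w}$ is constant for the flat-bottomed structure. This gives $2r\,\frac{dq_i}{dt}=-h_{\rm w}\llbracket\underline{P}_{|_{\mathcal{I}}}\rrbracket/\eps$, and combined with \eqref{jump-Pi} it yields \eqref{evo-eq-qi} with $\alpha=2r/h_{\rm w}$.
\end{itemize}

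\emph{The case $\kappa\neq0$.} Here I expect the main subtlety. Assumption \ref{assumption}(ii) is only imposed for $\kappa=0$, so I would read it as fixing the coupling condition, namely the Bernoulli principles \eqref{bernoulli-left}--\eqref{bernoulli-right}. That condition is then kept for general $\kappa$, with the damped pressure \eqref{pressure-mean} on the chamber side. Subtracting \eqref{bernoulli-left} from \eqref{bernoulli-right} gives \eqref{jump-Pi} for every $\kappa$, because $\zeta_{\rm w}$ and $\eps q_i^2/(2h_{\rm w}^2)$ take the same value at $\pm r$ for the flat bottom. The rest of the derivation goes through unchanged.

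\emph{From the transmission problem back to the constrained model.}
\begin{enumerate}
\item Given a solution of \eqref{trans-sys}--\eqref{evo-eq-qi}, set $\zeta=\zeta_{\rm w}$ and $q=q_i$ in $\mathcal{I}$, and define $\underline{P}$ on $\mathcal{E}$ by \eqref{pressure-mean}.
\item Define $\underline{P}_{|_{\mathcal{I}}}$ as the unique solution of the Dirichlet problem stated above: it is affine in $x$, with boundary values given by the Bernoulli conditions.
\item The jump of this affine $\underline{P}_{|_{\mathcal{I}}}$ is \eqref{jump-Pi}. By \eqref{evo-eq-qi}, its constant slope equals $-\eps\,\frac{dq_i}{dt}/h_{\rm w}$, so \eqref{NSW-interior} holds in $\mathcal{I}$.
\item Continuity of the discharge follows from \eqref{trans-conditions}.
\item By Proposition \ref{prop-intpres-con}, total energy is conserved at order $O(\eps\mu)$ when $\kappa=0$, so Assumption \ref{assumption} is satisfied.
\end{enumerate}

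The only genuine obstacle is therefore interpretive: making precise that the coupling obtained in the non-damped case is retained when $\kappa\neq0$. Everything else is direct substitution.
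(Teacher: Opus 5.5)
Your proposal is correct and follows the paper's argument. The paper likewise combines \eqref{bra-con-equi} with \eqref{pressure-mean} to get \eqref{jump-Pi}, integrates the momentum equation of \eqref{NSW-interior} over $\mathcal{I}$ to obtain \eqref{evo-eq-qi} with $\alpha=2r/h_{\rm w}$, and reads \eqref{trans-conditions} off item \eqref{discharge-con} of Assumption \ref{assumption}. The paper writes only this forward derivation and silently keeps \eqref{bra-con-equi} for $\kappa\neq0$, so your converse via the Dirichlet problem and your explicit treatment of the damped case are sound additions. Note only that the Bernoulli principles are stronger than what item \eqref{tot-ene-conservation} actually yields, which is harmless here because only the jump $\llbracket \underline{P}_{|_{\mathcal{I}}}\rrbracket$ enters.
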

In fact, we know from \cite{BecLan22} that in the dispersive case ($\widetilde{\mu}\neq0$) the jump in \eqref{evo-eq-qi} contains a leading-order term and hence it must be moved to the left hand-side of \eqref{evo-eq-qi}. This fact has important consequences for simulations, since it avoids numerical instabilities  when treating the additional leading-order term as part of the left-hand side of the evolution equation. In order to show its explicit expression, let us introduce the momentum flux 
\begin{equation}\label{momentum-flux}
	\varphi(\zeta, q)=\zeta + \eps\frac{\zeta^2}{2} +\eps \frac{q^2}{h}
\end{equation} and the operators $R_0, R_1$ defined as the inverses of  $I- \widetilde{\mu}^2 \partial_x^2$ complemented respectively with homogeneous Dirichlet and Neumann boundary conditions, that is, 
\begin{equation}\label{R0}
	R_0 v= u \iff \begin{cases}
		(I- \widetilde{\mu}^2 \partial_x^2) u= v\qquad  \text{in} \quad \mathcal{E}\\
		u\rightarrow 0 \ \text{as}\ x\rightarrow -\infty, \quad u=0 \ \text{at} \ x={\pm r}, \ell
	\end{cases}
\end{equation}
and 
\begin{equation}\label{R1}
	R_1 v= u \iff \begin{cases}
		(I- \widetilde{\mu}^2 \partial_x^2) u= v\qquad  \text{in} \quad \mathcal{E}\\
		\partial_xu\rightarrow 0 \ \text{as}\ x\rightarrow -\infty, \quad \partial_xu=0 \ \text{at} \ x={\pm r}, \ell
	\end{cases}
\end{equation}
\begin{proposition}\label{prop-evo-qi}
	The evolution equation \eqref{evo-eq-qi} of the transmission problem in Proposition \ref{prop-transmission} can be reformulated as the following nonlinear first-order ODEs:
	for $\widetilde{\mu}=0$,
	\begin{equation}\label{added-evo-qi-0}
		\alpha \frac{dq_i}{dt} + \frac{ \eps}{2}\left \llbracket\frac{1}{h^2}\right\rrbracket  q_i^2=- \left\llbracket \zeta\right\rrbracket  - P_0 \left(I-\kappa\mathcal{D}\right)\overline\zeta,
	\end{equation}
	and for $\widetilde{\mu}\neq0$
	\begin{equation}\label{added-evo-qi}
		\left(\alpha + \mathcal{T}_{\widetilde{\mu}}(h)\right)\frac{dq_i}{dt} - \frac{ \eps}{2}\left \llbracket\frac{1}{h^2}\right\rrbracket  q_i^2=- \left\llbracket \mathcal{F} \right\rrbracket  - P_0 \left(I-\kappa\mathcal{D}\right)\overline\zeta,
	\end{equation} where 
	\begin{equation}\label{T-mu}
		\mathcal{T}_{\widetilde{\mu}}(h)=\widetilde{\mu}  \left(C_{1,\widetilde{\mu}} \left\langle \frac{1}{h}\right\rangle + C_{2,\widetilde{\mu}} \left\llbracket \frac{1}{h}\right\rrbracket\right)>0,
	\end{equation}
	with $$C_{1,\widetilde{\mu}}= \frac{ e^{\frac{\ell-r}{\widetilde{\mu}}}}{\sinh(\frac{\ell-r}{\widetilde{\mu}})}, \qquad C_{2,\widetilde{\mu}}= \frac{e^{-\frac{\ell-r}{\widetilde{\mu}}}}{ 2\sinh(\frac{\ell-r}{\widetilde{\mu}})},$$
	and 
	\begin{equation}\label{calF}
		\mathcal{F}= \dfrac{1}{h} \left( \eps \frac{\zeta^2}{2} +R_1 \varphi \right)
	\end{equation}with 
	$\varphi, R_1$ as in \eqref{momentum-flux} and \eqref{R1}. 
\end{proposition}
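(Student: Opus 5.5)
We start from the evolution equation \eqref{evo-eq-qi} and make the jump $\llbracket \zeta+\mathfrak{E}\rrbracket$ explicit in each case, using the transmission conditions \eqref{trans-conditions}. These conditions give $q(\pm r)=q_i$ at both contact points.

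\textbf{Case $\widetilde\mu=0$.} Here $\mathfrak{E}=\eps q^2/(2h^2)$. The traces of $q$ at $x=\pm r$ both equal $q_i$, so
\begin{equation*}
\llbracket \mathfrak{E}\rrbracket=\frac{\eps}{2}\Big\llbracket \frac{1}{h^2}\Big\rrbracket q_i^2 .
\end{equation*}
Moving this term to the left-hand side of \eqref{evo-eq-qi} gives \eqref{added-evo-qi-0} directly.

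\textbf{Case $\widetilde\mu\neq0$.} The term to isolate is $-\widetilde\mu^2\llbracket \partial_x\partial_t q/h\rrbracket$. We use the momentum equation in \eqref{trans-sys}. Since $\partial_t\zeta=-\partial_x q$, the identity $h\partial_x\zeta=\partial_x(\zeta+\eps\zeta^2/2)$ lets us write it in conservative form:
\begin{equation*}
(1-\widetilde\mu^2\partial_x^2)\partial_t q+\partial_x\varphi=0 \quad\text{in } \mathcal{E},
\end{equation*}
with $\varphi$ as in \eqref{momentum-flux}.

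Next we split $\partial_t q = w+\dot q_i\,\psi$. Here $\psi$ solves $(1-\widetilde\mu^2\partial_x^2)\psi=0$ in $\mathcal{E}$, with $\psi=1$ at $x=\pm r$, $\psi=0$ at $x=\ell$, and $\psi\to0$ at $-\infty$. Then $w=-R_0\partial_x\varphi$, and $w$ carries homogeneous Dirichlet data, as required by \eqref{R0}.

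We compute $\psi$ explicitly:
\begin{equation*}
\psi=e^{(x+r)/\widetilde\mu}\ \text{on } \mathcal{E}_-,\qquad \psi=\frac{\sinh((\ell-x)/\widetilde\mu)}{\sinh((\ell-r)/\widetilde\mu)}\ \text{on } \mathcal{E}_+ .
\end{equation*}
Hence $\partial_x\psi(-r)=1/\widetilde\mu$ and $\partial_x\psi(r)=-\coth((\ell-r)/\widetilde\mu)/\widetilde\mu$. Inserting these into $-\widetilde\mu^2\llbracket \partial_x\psi/h\rrbracket\dot q_i$ produces
\begin{equation*}
\frac{\widetilde\mu}{h(r)}\coth\!\Big(\frac{\ell-r}{\widetilde\mu}\Big)+\frac{\widetilde\mu}{h(-r)} .
\end{equation*}
We then rewrite $1/h(\pm r)=\langle 1/h\rangle\pm\tfrac12\llbracket 1/h\rrbracket$ and use $\coth+1=e^{s}/\sinh s$ and $\coth-1=e^{-s}/\sinh s$ to match the constants $C_{1,\widetilde\mu},C_{2,\widetilde\mu}$ in \eqref{T-mu}. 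Positivity of $\mathcal{T}_{\widetilde\mu}(h)$ is immediate from the unsplit form, since $h>0$ and $\coth>0$.

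It remains to handle $w$, which we expect to be the main bookkeeping obstacle: the nonlocal terms have to be reorganized into $R_1\varphi$. The key identity is
\begin{equation*}
\partial_x R_0\partial_x\varphi=\frac{1}{\widetilde\mu^2}\big(R_1\varphi-\varphi\big).
\end{equation*}
To check it, set $u=R_0\partial_x\varphi$ and $v=\partial_x u$. Then $(1-\widetilde\mu^2\partial_x^2)v=\partial_x^2\varphi$. At the boundary, the equation for $u$ together with $u=0$ gives $\partial_x v=\partial_x^2u=-\partial_x\varphi/\widetilde\mu^2$. Therefore $\widetilde\mu^2 v+\varphi$ satisfies $(1-\widetilde\mu^2\partial_x^2)(\widetilde\mu^2v+\varphi)=\varphi$ with homogeneous Neumann data, which is exactly $R_1\varphi$.

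Using this identity,
\begin{equation*}
-\widetilde\mu^2\Big\llbracket \frac{\partial_x w}{h}\Big\rrbracket=\Big\llbracket\frac{R_1\varphi-\varphi}{h}\Big\rrbracket .
\end{equation*}
Adding $\llbracket\zeta+\eps q^2/(2h^2)\rrbracket$ and expanding $\varphi/h=\zeta/h+\eps\zeta^2/(2h)+\eps q^2/h^2$ with $\zeta=\zeta h-\eps\zeta^2$, the $\zeta$ terms reduce to $\eps\zeta^2/(2h)$. The $q_i^2$ terms combine to $-\tfrac{\eps}{2}\llbracket 1/h^2\rrbracket q_i^2$, which accounts for the sign change relative to \eqref{added-evo-qi-0}. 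Collecting everything gives $\llbracket\mathcal{F}\rrbracket$ with $\mathcal{F}$ as in \eqref{calF}, and hence \eqref{added-evo-qi}. We will track these algebraic signs carefully.
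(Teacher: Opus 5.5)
Your proposal is correct and follows essentially the paper's route. It uses the same splitting $\partial_t q=-R_0\partial_x\varphi+\frac{dq_i}{dt}\psi$ with the same explicit $\psi$, the same identity $\widetilde{\mu}^2\partial_x R_0\partial_x\varphi=(R_1-I)\varphi$, and the same algebra producing $\mathcal{F}$ and $\mathcal{T}_{\widetilde{\mu}}(h)$. The only differences are minor: you verify that identity directly through the boundary traces, whereas the paper derives it from the commutation $R_0\partial_x=\partial_x R_1$, and for $\widetilde{\mu}=0$ you evaluate the traces $q(\pm r)=q_i$ directly instead of using the product rule for jumps.
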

\begin{proof}
	When  $\widetilde{\mu}=0$ (equivalently, $\mu=0$ in \eqref{frak-E}), the shallow water dynamic pressure reads 
	$\mathfrak{E}  = 	 \dfrac{\eps}{2} \dfrac{q^2}{h^2}.$
	Using the identity $\llbracket fg\rrbracket=\llbracket f\rrbracket \langle g \rangle+  \langle f\rangle \llbracket g\rrbracket$ and since \eqref{trans-conditions} implies that $\llbracket q^2\rrbracket=0$ and $\langle q^2\rangle=q_i^2,$ we have that 
	\begin{equation}\label{jump-qi}
		\frac{\eps}{2}\left\llbracket \frac{q^2}{h^2}\right\rrbracket=  \frac{\eps}{2}\left\llbracket \frac{1}{h^2}\right\rrbracket q_i^2
	\end{equation}
	and \eqref{added-evo-qi-0}
	follows directly from
	\eqref{evo-eq-qi}. 	
	Let us now consider $\widetilde{\mu}\neq0$.
	Thanks to the relation $f(\pm r)= \langle f\rangle \pm \llbracket f\rrbracket/2$, differentiating \eqref{bound-cond} and \eqref{trans-conditions} with respect to time implies that $$\partial_t q \rightarrow 0 \quad \text{as} \quad x\rightarrow -\infty, \qquad  	\partial_t q (\pm r)= \dfrac{dq_i}{dt}, \qquad \partial_t q (\ell)=0.$$ Therefore, using \eqref{momentum-flux}-\eqref{R0}, we can write $\partial_t q$ in the momentum equation of \eqref{trans-sys} explicitly as
	\begin{equation}\label{explicit-timederq}\begin{aligned}
		 	 \partial_t q = -R_0\partial_x \varphi + \frac{dq_i}{dt} e^{\frac{x+r}{\widetilde{\mu}}}\quad \text{in} \quad \mathcal{E}_-,\quad \
				\partial_t q = -R_0\partial_x \varphi + \frac{dq_i}{dt} \frac{\sinh(\frac{\ell-x}{\widetilde{\mu}})}{\sinh(\frac{\ell-r}{\widetilde{\mu}})} 
			\quad \text{in} \quad \mathcal{E}_+
		\end{aligned}
	\end{equation}with $\varphi$ defined in \eqref{momentum-flux}, where the second terms on the right-hand sides of the equalities in \eqref{explicit-timederq} are respectively the solutions to 
	\begin{equation*}
		\begin{cases}
			\left(I-\widetilde{\mu}^2 \partial_x^2\right)u=0 \qquad \text{in} \quad \mathcal{E}_-\\
			u \rightarrow 0 \quad \text{as} \quad x\rightarrow -\infty, \qquad u= \dfrac{dq_i}{dt} \quad\text{at}\quad x=-r
		\end{cases}
	\end{equation*}and
	\begin{equation*}
		\begin{cases}
			\left(I-\widetilde{\mu}^2 \partial_x^2\right)u=0 \qquad \text{in} \quad \mathcal{E}_+\\
			u= \dfrac{dq_i}{dt} \quad\text{at}\quad x=r, \qquad u =0 \quad \text{at} \quad x=\ell.
		\end{cases}
	\end{equation*}
	Differentiating \eqref{explicit-timederq} with respect to $x$ and dividing by $h$ permits us to rewrite the dispersive jump term in \eqref{evo-eq-qi} as
	\begin{equation}\label{disp-jump}
		\begin{aligned}
			-\widetilde{\mu}^2\left\llbracket \frac{\partial_x\partial_t q }{h}\right\rrbracket&= \widetilde{\mu}^2\left\llbracket \frac{\partial_xR_0\partial_x \varphi}{h}\right\rrbracket +\widetilde{\mu} \frac{dq_i}{dt} \Big(  
			\frac{\cosh(\frac{\ell-r}{\widetilde{\mu}})}{\sinh(\frac{\ell-r}{\widetilde{\mu}})}
			\frac{1}{h(r)} +  \frac{1}{h(-r)} \Big)\\
		&=\widetilde{\mu}^2\left\llbracket \frac{\partial_xR_0\partial_x \varphi}{h}\right\rrbracket +\frac{dq_i}{dt} \frac{\widetilde{\mu}}{ 
			\sinh(\frac{\ell-r}{\widetilde{\mu}})
		}\Big(   e^{\frac{\ell-r}{\widetilde{\mu}}} \left\langle \frac{1}{h}\right\rangle  + \frac{e^{-\frac{\ell-r}{\widetilde{\mu}}}}{2} \left\llbracket \frac{1}{h} \right\rrbracket\Big). 
	\end{aligned}
\end{equation}
Thanks to the identity $R_0\partial_x = \partial_x R_1$ and using the definition of $R_1$, we write
\begin{equation}\label{R0-R1}\widetilde{\mu}^2 \partial_x R_0 \partial_x \varphi = 	\widetilde{\mu}^2 \partial_x^2 R_1 \varphi= (R_1-I) \varphi,
\end{equation}which, together with \eqref{momentum-flux}, implies that 
\begin{equation}\label{jump-term}
	\zeta + \eps\frac{q^2}{2h^2} +\widetilde{\mu}^2\frac{\partial_x R_0 \partial_x \varphi}{h}= 	\zeta + \eps\frac{q^2}{2h^2}+ \frac{(R_1 - I)\varphi}{h}=	\frac{1}{h}\Big( \eps\frac{\zeta^2}{2}+ R_1\varphi\Big) -\eps\frac{q^2}{2h^2}.
\end{equation}
Invoking \eqref{jump-qi} and introducing the notations \eqref{T-mu}-\eqref{calF}, we finally combine \eqref{disp-jump} and \eqref{jump-term} to write the jump term in \eqref{evo-eq-qi} as 
\begin{equation*}
	\left \llbracket \zeta + \mathfrak{E} \right\rrbracket=\left \llbracket  \mathcal{F}    \right\rrbracket + \mathcal{T}_{\widetilde{\mu}}(h)\dfrac{dq_i}{dt} -\frac{\eps}{2}\left\llbracket \frac{1}{h^2}\right \rrbracket q_i^2, 
\end{equation*}which yields \eqref{added-evo-qi}.
\end{proof}
\begin{remark} The positivity of the dispersive contribution $\mathcal{T}_{\widetilde{\mu}}(h)$ in \eqref{T-mu} guarantees the solvability of \eqref{added-evo-qi}. In the limiting case of an OWC chamber with infinite width, the term due to the jump disappears and we recover the same dispersive contribution as in the absence of the chamber \cite{BecLan22}, namely,
\begin{equation*}
	\mathcal{T}_{\widetilde{\mu}}(h)  \rightarrow  2\widetilde{\mu} \left\langle \frac{1}{h} \right\rangle \quad \text{as}\quad |\mathcal{E}_+|=\ell-r\rightarrow+\infty.
\end{equation*}
\end{remark}

\subsection{Well-posedness of the initial boundary value problems}
In this subsection, we discuss the well-posedness of the initial boundary value problems associated with the transmission problems \eqref{trans-sys}-\eqref{trans-conditions} and \eqref{added-evo-qi-0} for $\widetilde{\mu}=0$, and \eqref{trans-sys}-\eqref{trans-conditions} and \eqref{added-evo-qi} for $\widetilde{\mu}\neq0$.
To this end, we complement the transmission problems with the initial conditions
\begin{equation}\label{ini-conds-trans}
	(\zeta, q)(0, \cdot)= (\zeta^{\rm in}, q^{\rm in}) \quad \text{in} \quad \mathcal{E}, \qquad  q_i(0)= q_i^{\rm in}.
\end{equation}

\smallskip

\emph{Hyperbolic case.} To study the well-posedness of the initial boundary value problem in the case $\widetilde{\mu}=0$, we consider a slightly different problem. Rather then setting the nonlinear shallow water equations in the unbounded domain $(-\infty, \ell)$, we introduce a non-physical left boundary at $x=-\ell$, where we prescribe a \emph{generating} boundary condition. More precisely, we assume that the fluid surface elevation is known
(typically measured by floating buoys) and equal to a given entry time-dependent function $f_{\rm ent}$.
The resulting transmission problem then reads 
\begin{equation}\label{NSW-sym}
	\begin{cases}
		\partial_t \zeta + \partial_x q=0 \\[5pt]
		\partial_t q + \eps \partial_x \Big(\dfrac{q^2}{h}\Big) + h \partial_x \zeta=0
	\end{cases}\quad \text{in}\quad \mathcal{E}_{\rm sym}=(-\ell, -r)\cup (r, \ell)
\end{equation} with boundary conditions
\begin{equation*}
	\zeta = f_{\rm ent} \quad \text{at}\quad {x=-\ell}, \qquad q=0 \ \quad \text{at}\quad {x=\ell},
\end{equation*} 
and transmission conditions
\begin{equation}\label{tra-con}
	\llbracket q \rrbracket=0, \qquad \langle q\rangle = q_i,
\end{equation}where $q_i$ satisfies
\begin{equation}\label{evo-qi-NSW}
	\alpha \frac{dq_i}{dt} + \frac{ \eps}{2}\left \llbracket\frac{1}{h^2}\right\rrbracket  q_i^2=- \left\llbracket \zeta\right\rrbracket  - P_0 \left(I-\kappa\mathcal{D}\right)\overline\zeta.
\end{equation} 

The reason for working with the bounded domain $\mathcal{E}_{\rm sym}$ instead of the unbounded exterior domain $\mathcal{E}$ is purely technical. Indeed, the well-posedness analysis relies on the reformulation of the $2\times2$ quasilinear system set in $\mathcal{E}$ into a $4\times4$ quasilinear system set in a single connected component of $\mathcal{E}$, say $\mathcal{E}_+$. To preserve the autonomous quasilinear structure of the system, we must perform an affine change of variables, which cannot map unbounded intervals onto bounded ones.\\
Before stating the local well-posedness result, we introduce the functional space to which the solution will belong. Let $m\geq0$ be an integer, $\Omega\subseteq \mathbb{R}$ and $T>0$. We define 
\begin{equation*}
	\mathbb{W}^m_T(\Omega)= \bigcap_{j=0}^m C^j([0,T]; H^{m-j}(\Omega))
\end{equation*}endowed with the norm
$$\|(\zeta, q)\|_{\mathbb{W}^m_T(\Omega)}=\sup_{t\in [0,T]}\sum_{j=0}^{m}\|(\partial_t^j \zeta, \partial_t^j q)(t, \cdot)\|_{H^{m-j}(\Omega)}. $$
\begin{theorem}\label{theo-trans-wp}
	Let $m\geq 2$ be an integer. Consider $(\zeta^{\rm in}, q^{\rm in}) \in H^m (\mathcal{E}_{\rm sym})$ such that 
	\begin{equation}\label{pos-assumption}
		\inf_{ \mathcal{E}}\Big(\sqrt{1+\eps \zeta^{\rm in}} - \eps\Big|\frac{q^{\rm in}}{1+\eps \zeta^{\rm in}}\Big| \Big) >0,
	\end{equation}  $q_i^{\rm in}\in \mathbb{R}$ and $f_{\rm ent}\in H^m(0,T)$ for some $T>0$, verifying
	suitable compatibility conditions. Then, for any $\eps\in (0,1]$, there exist $T_1\in (0, T]$ and a unique solution $(\zeta, q, q_i)$ to system \eqref{NSW-sym}-\eqref{evo-qi-NSW}, with $(\zeta, q)\in \mathbb{W}^m_{T_1}(\mathcal{E}_{\rm sym})$ and $q_i\in H^{m+1}(0,T_1)$, satisfying the initial conditions \begin{equation*}
		(\zeta, q)(0, \cdot)= (\zeta^{\rm in}, q^{\rm in}) \quad \text{in} \quad \mathcal{E}_{\rm sym}, \qquad  q_i(0)= q_i^{\rm in}.
	\end{equation*}
\end{theorem}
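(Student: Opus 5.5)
The plan is to reduce the problem to the framework of \cite{BocHeVer23}, where the same transmission problem was solved with a constant-in-$\overline\zeta$ term replaced by the air pressure law. The only new ingredient here is the delay term $P_0(I-\kappa\mathcal{D})\overline\zeta$ in \eqref{evo-qi-NSW}. This term is a bounded, causal, linear functional of $\zeta$: it is an average over $\mathcal{E}_+$ followed by a convolution in time with $e^{-\kappa t}$. It is therefore of lower order and does not affect the hyperbolic structure.

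\textbf{Step 1: folding onto $\mathcal{E}_+$.} I will fold $\mathcal{E}_-=(-\ell,-r)$ onto $\mathcal{E}_+=(r,\ell)$ via the affine map $x\mapsto -x$. This turns \eqref{NSW-sym} into a $4\times4$ quasilinear hyperbolic system $\partial_t U + A(U)\partial_x U=0$ on $(r,\ell)$, with $U=(\zeta_+,q_+,\zeta_-\circ(-\cdot),q_-\circ(-\cdot))$. Condition \eqref{pos-assumption} guarantees strict hyperbolicity, with two positive and two negative eigenvalues (subcritical flow). At $x=\ell$ there are two boundary conditions: the wall condition and the generating condition. At $x=r$ the two conditions are \eqref{tra-con}, which involve $q_i$. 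The ODE \eqref{evo-qi-NSW} for $q_i$ becomes a boundary ODE driven by the traces of $\zeta_\pm$ and $h_\pm$ at $x=r$, plus the nonlocal term $\overline\zeta$ and its time-delay integral.

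\textbf{Step 2: a priori estimates.} I will linearize around a given state and derive energy estimates in $\mathbb{W}^m_T$ using a Kreiss symmetrizer, exactly as in \cite{BocHeVer23}. The boundary conditions satisfy the uniform Lopatinskii condition, because the incoming characteristics are determined by the outgoing ones and the data $(q_i,f_{\rm ent})$. This gives control of the boundary traces in $L^2(0,T)$ together with their time derivatives up to order $m$; this is the hidden trace regularity. Differentiating \eqref{evo-qi-NSW} $m$ times in time then bounds $q_i\in H^{m+1}(0,T)$ in terms of the traces of $\zeta$ in $H^m(0,T)$. The new term is handled as follows. Since $|\overline\zeta(t)|\le |\mathcal{E}_+|^{-1/2}\|\zeta(t)\|_{L^2}$ and $\partial_t^j\mathcal{D}\overline\zeta$ is expressible through $\partial_t^{j-1}\overline\zeta$ and lower terms, we get $\|(I-\kappa\mathcal D)\overline\zeta\|_{H^m(0,T)}\lesssim \sqrt{T}\,\|\zeta\|_{\mathbb W^m_T}$. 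This closes the loop with a small factor. Combining the two estimates via Gronwall gives a bound on a short time interval.

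\textbf{Step 3: existence and uniqueness.} I will construct solutions by an iterative scheme that alternates two substeps: solving the linear IBVP given $q_i^{(n)}$, then the ODE for $q_i^{(n+1)}$. Contraction in a lower norm ($m=0$ or $1$), together with uniform bounds in the high norm, yields convergence, and uniqueness follows from the same contraction estimate. The compatibility conditions are those of \cite{BocHeVer23}, namely matching time derivatives at $t=0$ at the corners $x=\pm r,\pm\ell$, with $q_i^{\rm in}$ and the derivatives of $q_i$ computed from \eqref{evo-qi-NSW}.

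\textbf{Main obstacle.} I expect the main difficulty to be closing the estimate for the trace term $\llbracket\zeta\rrbracket$ in the $q_i$ equation. It requires the $L^2$-in-time trace control provided by the Kreiss symmetrizer, and that control is not available from standard energy estimates alone. I will import it directly from \cite{BocHeVer23}, checking only that the nonlocal delay term enters as a lower-order, Lipschitz perturbation.
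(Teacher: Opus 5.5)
Your proposal is correct and follows the paper's architecture: fold $\mathcal{E}_-$ onto $\mathcal{E}_+$ into a $4\times4$ quasilinear system with a boundary ODE at $x=r$, use \eqref{pos-assumption} to build a Kreiss symmetrizer with hidden trace regularity, and close an iterative scheme. This is exactly the structure of \cite[Theorem 4.15]{BocHeVer23}, which the paper invokes. The one real difference is how you treat $P_0(I-\kappa\mathcal{D})\overline\zeta$.

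\textbf{Your route.} You regard this term as a new, spatially nonlocal perturbation and plan to check that the machinery of \cite{BocHeVer23} tolerates it. This does work. $\overline\zeta$ is a bounded functional of $\zeta(t)\in L^2$, its time derivatives up to order $m$ are controlled by $\|\zeta\|_{\mathbb{W}^m_T}$, and $\mathcal{D}$ is a causal Volterra operator. So the term is Lipschitz and lower order, both in the high-norm bounds and in the low-norm contraction. However, it is not a ``direct import'': the boundary ODE in \cite{BocHeVer23} depends on traces only, so you would have to reopen their iteration to allow an interior integral.

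\textbf{The paper's route.} The paper removes the nonlocality instead. The continuity equation together with $q(\ell)=0$ and $q(r)=q_i$ gives $\frac{d\overline\zeta}{dt}=\frac{q_i}{|\mathcal{E}_+|}$. Hence $P_{\rm ch}=\epsilon(I-\kappa\mathcal{D})\overline\zeta$ solves $\frac{dP_{\rm ch}}{dt}+\kappa P_{\rm ch}=\frac{\epsilon}{|\mathcal{E}_+|}q_i$. Then \eqref{evo-qi-NSW} becomes a finite-dimensional ODE for $G=(q_i,P_{\rm ch})$, driven only by the traces $u(r)$. This is literally the setting of \cite{BocHeVer23}, which already included the time-dependent air pressure, so the delay term is not actually a new ingredient. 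No perturbative check is needed, and the regularity $P_{\rm ch}\in H^{m+2}(0,T_1)$ comes for free.

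\textbf{Minor imprecision.} The folded system is not strictly hyperbolic: at rest both blocks have eigenvalues $\pm1$. What you need, and what \eqref{pos-assumption} provides, is a block-diagonal hyperbolic system with two incoming characteristics at each endpoint.
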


\begin{proof}This result is essentially \cite[Theorem 4.15]{BocHeVer23}, which is formulated for a slightly different version of the transmission problem (with dimensional equations). Before outlining the main steps of its proof, we first rewrite the problem in the same formulation by splitting \eqref{evo-qi-NSW} into a system of two coupled ODEs with unknowns $(q_i, P_{\rm ch})$. Indeed, after noticing that item \eqref{discharge-con} of Assumption \ref{assumption} together with the wall condition at $x=\ell$ yields that
	\begin{equation}\label{mean-qi}
		\dfrac{d\overline\zeta}{dt} = \frac{q_i}{|\mathcal{E}_+|},
	\end{equation}we use \eqref{evo-Pch-dimless} and \eqref{Pch-dimless}
	to write \eqref{evo-qi-NSW} as the system 
	\begin{equation}\label{ODE-system}
		\begin{cases}
			\alpha \dfrac{dq_i}{dt}+ \dfrac{\eps}{2} \left\llbracket\dfrac{1}{h^2}\right\rrbracket q_i^2= - \left\llbracket \zeta \right\rrbracket - \dfrac{P_0}{\epsilon}P_{\rm ch}\\[7pt]
			\dfrac{dP_{\rm ch}}{dt}  + \kappa P_{\rm ch} =\dfrac{\epsilon}{|\mathcal{E}_+|}q_i, \quad P_{\rm ch}(0)= \dfrac{\epsilon}{|\mathcal{E}_+|}\overline\zeta(0).
		\end{cases}
	\end{equation}
	Now we are in the same setting as in \cite[Theorem 4.15]{BocHeVer23}. In view of the change of variable $x\mapsto -x$ in $(-\ell, -r)$, after denoting $(\zeta^-,q^-)(\cdot, x)= (\zeta,q)(\cdot, -x)$ and introducing the unknowns $u=(\eta, q, \eta^- , q^-)$ and $G=(q_i, P_{\rm ch})$, the transmission problem \eqref{NSW-sym}-\eqref{tra-con} and \eqref{ODE-system} can be reformulated as a $4\times 4$ quasilinear hyperbolic system set in the chamber domain in the form
	\begin{equation}\label{4x4quasihyp}
		\begin{cases}
			\partial_t u + \mathcal{A}(u)\partial_x u = 0 \quad &\text{in}\quad  \mathcal{E}_+\\
			\mathcal{M}_r u=V(G(t)) \quad &\text{at}\quad x=r\\
			\mathcal{M}_\ell u=g(t)\quad &\text{at}\quad x=\ell
		\end{cases}
	\end{equation} with a semilinear boundary condition determined by a first-order ODE 
	\begin{equation}\label{semilinearODE}
		\frac{dG}{dt}= \Theta\left(G, u(r) \right)
	\end{equation} containing terms that involve the trace of $u$ at $x=r$. The main ingredient in the analysis of the initial boundary value problem associated with \eqref{4x4quasihyp}-\eqref{semilinearODE} is the construction of a Kreiss symmetrizer that makes the boundary conditions maximally dissipative. This property allows one to show a hidden trace regularity of the PDE solution in the a priori estimates for the linearized version of \eqref{4x4quasihyp}. The explicit construction of such a symmetrizer requires the assumption of the subsonic regime condition \eqref{pos-assumption}, propagated by the dynamics, that guarantees the existence of two incoming characteristics and ensures that an analogue of the uniform Kreiss-Lopatinskii condition is satisfied.
	Moreover, seeking solutions enjoying the classical hyperbolic (integer) Sobolev regularity $m\geq 2$ requires suitable compatibility conditions (see \cite[Definition 4.12]{BocHeVer23} for the precise statement). Taking advantage of the extra trace regularity control, it is possible to show the convergence of an iterative scheme on the coupled PDE-ODE problem \eqref{4x4quasihyp}-\eqref{semilinearODE} and to find the solution as its limit.\\
	Applying \cite[Theorem 4.15]{BocHeVer23} then yields local existence and uniqueness of the solution $(\zeta, q, q_i, P_{\rm ch})$ to \eqref{NSW-sym}-\eqref{tra-con} and \eqref{ODE-system}, with $(\zeta,q)\in \mathbb{W}^m_{T_1}(\mathcal{E}_{\rm sym})$ verifying the extra trace regularity
	\begin{equation*}
		\sum_{j=0}^{m}	\Big(\|(\partial_x^j\zeta, \partial_x^jq)(\cdot,\pm r)\|_{H^{m-j}(0,T_1)} + \|(\partial_x^j\zeta, \partial_x^jq)(\cdot, \pm \ell)\|_{H^{m-j}(0,T_1)}	\Big) < \infty,
	\end{equation*}
	$(q_i, P_{\rm ch})\in H^{m+1}(0,T_1)$ for $m\geq 2$, satisfying the initial conditions
	\begin{equation}\label{conds-in}
		(\zeta, q)(0, \cdot)= (\zeta^{\rm in}, q^{\rm in}) \quad \text{in} \quad \mathcal{E}_{\rm sym}, \qquad  (q_i, P_{\rm ch})(0)= \Big(q_i^{\rm in},P_{\rm ch}^{\rm in}\Big),
	\end{equation} with $P_{\rm ch}^{\rm in}={\epsilon}\overline{\zeta^{\rm in}}$.
	In particular, using the second equation in \eqref{ODE-system}, we have that $P_{\rm ch}$ additionally belongs to $H^{m+2}(0,T_1)$. Coming back from \eqref{ODE-system} to \eqref{evo-qi-NSW} then concludes the proof.
	
\end{proof}

\smallskip

\emph{Dispersive case.} In the case $\widetilde{\mu}\neq 0$, we consider the transmission problem \eqref{trans-sys}-\eqref{evo-eq-qi} and \eqref{added-evo-qi} set in the original exterior domain $\mathcal{E}$. The following local well-posedness result holds:

\begin{theorem}\label{theo-trans-wp-disp}
	Let $m\geq 1$ be an integer. Consider $(\zeta^{\rm in}, q^{\rm in}) \in H^m(\mathcal{E})\times H^{m+1}(\mathcal{E}) $,  $q_i^{\rm in}\in \mathbb{R}$ such that
	\begin{equation}\label{initial-ass}
		\inf_{ \mathcal{E}}(1+\eps \zeta^{\rm in}) >0, \quad \llbracket q^{\rm in}\rrbracket =0, \quad \langle q^{\rm in}\rangle=q_i^{\rm in} \quad \text{and}\quad q^{\rm in} =0 \ \text{ at } \ x=\ell.
	\end{equation} 
	Then, for any $\eps\!\in \!(0,1]$ and $\widetilde{\mu} \!>\!0$, there exists $T\!>\!0$  such that the transmission problem
	\eqref{trans-sys}-\eqref{evo-eq-qi} and \eqref{added-evo-qi} admits a unique solution $(\zeta, q, q_i)\!\in \!C^1\!\left([0,T); \!H^m(\mathcal{E})\!\times\! H^{m+1}\!(\mathcal{E})\!\times\! \mathbb{R}\!\right)$ satisfying the initial conditions \eqref{ini-conds-trans}.
	In addition, denoting by $T_{\rm max}$ the maximal existence time, if $T_{\rm max}$ is finite then 
	\begin{equation}
		\limsup_{t\rightarrow T_{\rm max}^-}\Big( \Big\| \zeta(t), q(t), \frac{1}{h(t)}\Big\|_{L^\infty(\mathcal{E})}  + |q_i(t)|\Big) = +\infty.
	\end{equation} 
	
\end{theorem}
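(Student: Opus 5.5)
We recast the transmission problem \eqref{trans-sys}-\eqref{trans-conditions}, \eqref{added-evo-qi} as an ODE on the Banach space $X^m=H^m(\mathcal{E})\times H^{m+1}(\mathcal{E})\times\mathbb{R}$ and apply the Cauchy--Lipschitz theorem. Using the computation in the proof of Proposition \ref{prop-evo-qi}, $\partial_t q$ is an explicit function of $(\zeta,q,q_i)$ and $\frac{dq_i}{dt}$:
\begin{equation*}
\partial_t q = -R_0\partial_x\varphi + \frac{dq_i}{dt}\,\psi, \qquad \psi= e^{\frac{x+r}{\widetilde{\mu}}}\ \text{in}\ \mathcal{E}_-, \quad \psi=\frac{\sinh(\frac{\ell-x}{\widetilde{\mu}})}{\sinh(\frac{\ell-r}{\widetilde{\mu}})}\ \text{in}\ \mathcal{E}_+ .
\end{equation*}
Since $\mathcal{T}_{\widetilde{\mu}}(h)>0$ whenever $h>0$, we can solve \eqref{added-evo-qi} for $\frac{dq_i}{dt}$. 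The delay term needs care: $\mathcal{D}\overline\zeta$ is not a function of the current state. We therefore append the scalar unknown $z=\mathcal{D}\overline\zeta$, which satisfies $\dot z=-\kappa z+\overline\zeta$ with $z(0)=0$. The resulting autonomous system
\begin{equation*}
\partial_t\zeta=-\partial_x q,\quad \partial_t q=-R_0\partial_x\varphi+\Phi\,\psi,\quad \dot q_i=\Phi,\quad \dot z=-\kappa z+\overline\zeta,
\end{equation*}
with $\Phi=\big(\alpha+\mathcal{T}_{\widetilde{\mu}}(h)\big)^{-1}\big(\tfrac{\eps}{2}\llbracket h^{-2}\rrbracket q_i^2-\llbracket\mathcal{F}\rrbracket-P_0(\overline\zeta-\kappa z)\big)$, is set on the open set $U=\{\inf(1+\eps\zeta)>0\}\subset X^m\times\mathbb{R}$. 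Note that $-\partial_x q$ loses one derivative from $H^{m+1}$ to $H^m$, which is harmless. Consistency with \eqref{trans-conditions} and the wall condition follows because $R_0$ produces functions vanishing at $\pm r,\ell$ and $\psi(\pm r)=1$, $\psi(\ell)=0$. Hence $\partial_t(q(\pm r)-q_i)=0$ and $\partial_t q(\ell)=0$, so the constraints \eqref{initial-ass} are propagated. Conversely, any solution of the original problem solves this ODE, which gives uniqueness.

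The key step, and the one I expect to be the main obstacle, is showing that the vector field is locally Lipschitz on $U$. We need three ingredients.
\begin{itemize}
\item \emph{Nonlinear terms.} For $m\ge1$, $H^m(\mathcal{E})$ is an algebra embedded in $L^\infty$ (one dimension), so $\varphi(\zeta,q)\in H^m$, and $1/h-1$ is locally Lipschitz in $H^m$ on $U$ by a Moser-type composition estimate.
\item \emph{Regularizing operators.} $R_0\partial_x$ maps $H^m\to H^{m+1}$ boundedly, via elliptic regularity for $1-\widetilde{\mu}^2\partial_x^2$ with Dirichlet conditions (equivalently, $R_0\partial_x=\partial_xR_1$ and $R_1:H^m\to H^{m+2}$).
\item \emph{Traces.} The traces $\llbracket\mathcal{F}\rrbracket$ and $\llbracket h^{-2}\rrbracket$ are Lipschitz by the trace embedding $H^1\subset C(\overline{\mathcal{E}})$. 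The mean $\overline\zeta$ is bounded linear, and $\mathcal{T}_{\widetilde{\mu}}(h)$ is bounded below by $\alpha$ after adding, and is locally Lipschitz.
\end{itemize}
One subtlety is that $\psi\in H^{m+1}(\mathcal{E}_-)$ requires decay at $-\infty$, which the exponential provides. Picard's theorem then yields a unique maximal $C^1$ solution in $U$, and we drop $z$ at the end.

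For the blow-up criterion we use the standard ODE alternative: if $T_{\rm max}<\infty$, then either the solution leaves every closed bounded subset of $U$, or it approaches $\partial U$. We rule out blow-up of the $X^m$ norm while the quantity $\|\zeta,q,1/h\|_{L^\infty}+|q_i|$ stays bounded. Energy estimates in $H^m\times H^{m+1}$ give $\frac{d}{dt}\|(\zeta,q)\|\le C(\|\zeta,q,1/h\|_{L^\infty},|q_i|)\|(\zeta,q)\|$, using tame product estimates and the fact that the whole vector field is of order zero thanks to the smoothing of $R_0\partial_x$. Gronwall's lemma then gives boundedness, while bounded $1/h$ keeps the solution away from $\partial U$. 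The term $z$ is controlled by the linear equation.
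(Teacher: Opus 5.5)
Your proposal is correct and follows essentially the paper's proof: you reduce to an infinite-dimensional ODE on $H^m(\mathcal{E})\times H^{m+1}(\mathcal{E})\times\mathbb{R}^2$ via \eqref{explicit-timederq} and the positivity of $\alpha+\mathcal{T}_{\widetilde{\mu}}(h)$, apply Cauchy--Lipschitz, propagate the transmission and wall conditions through $\psi(\pm r)=1$, $\psi(\ell)=0$, and obtain the blow-up criterion from Moser estimates and Gr\"onwall. The only difference is minor. You absorb the delay through $z=\mathcal{D}\overline\zeta$ with $\dot z=-\kappa z+\overline\zeta$, whereas the paper uses $P_{\rm ch}$ with $\dot P_{\rm ch}=-\kappa P_{\rm ch}+\frac{\epsilon}{|\mathcal{E}_+|}q_i$. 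Your choice has the small advantage that recovering the term $(I-\kappa\mathcal{D})\overline\zeta$ does not first require \eqref{mean-qi}, which the paper obtains only after the transmission conditions are propagated.
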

\begin{remark}\label{rem-uniform}
	We stress that the existence time $T>0$ of Theorem \ref{theo-trans-wp-disp} may shrink as $\widetilde{\mu}$ goes to zero. In order to prove that it is of order $O(1/(\eps + \widetilde{\mu}^2))=O(1/\eps)$ uniformly with respect to $\widetilde{\mu}$, as in the case of Boussinesq systems on the full line, one has to derive uniform estimates. This was done in \cite{BreLanMet21} for a formally equivalent Boussinesq system, where the dispersive term in \eqref{evo-eq-qi} is linear. To this end, compatibility conditions in the form of inequalities (instead of equalities for the hyperbolic case) must be required. In our case, the nonlinearity of the dispersive term makes the analysis much more delicate and following \cite{BecLan22} the same time-scale $O(1/\eps)$ can be obtained as a conditional result assuming $(\zeta, q)$ are uniformly bounded in $W^{1,\infty}(\mathcal{E})$. The authors were able to derive uniform estimates at time-scale $O(1/\sqrt{\eps})$ for the linearized version of \eqref{trans-sys} showing some hidden trace regularity pointwise in time due to dispersion. As they claimed, the question of whether this shorter time-scale is dictated by the dispersive control of the traces or it is only a technical limitation is open. We refer to \cite[Section 3]{BecLan22} for a deeper discussion.
\end{remark}

\begin{proof}
	The proof follows the steps of \cite[Theorem 3.3]{BecLan22} and relies on the fact that the problem in the dispersive case can be reduced to an infinite-dimensional ODE. First, analogously to the hyperbolic case, we rewrite  \eqref{added-evo-qi} as the coupled system 
	\begin{equation}\label{ODE-system-disp}
		\begin{cases}
			\left(\alpha + \mathcal{T}_{\widetilde{\mu}}(h)\right)\dfrac{dq_i}{dt} - \dfrac{ \eps}{2}\left \llbracket\dfrac{1}{h^2}\right\rrbracket  q_i^2=- \left\llbracket \mathcal{F} \right\rrbracket  - \dfrac{P_0}{\epsilon}P_{\rm ch},\\[7pt]
			\dfrac{dP_{\rm ch}}{dt}  + \kappa P_{\rm ch} =\dfrac{\epsilon}{|\mathcal{E}_+|}q_i, \quad P_{\rm ch}(0)= \dfrac{\epsilon}{|\mathcal{E}_+|}\overline\zeta(0).
		\end{cases}
	\end{equation}
	Using the reformulations of the momentum equations \eqref{explicit-timederq} and the positivity of $\alpha + \mathcal{T}_{\widetilde{\mu}}(h)$ whenever  $h$ does not vanish, we write the system  \eqref{trans-sys}-\eqref{trans-conditions} and \eqref{ODE-system-disp} as 
	\begin{equation}\label{ODE-0}
		\begin{cases}	
			\partial_t \zeta = -\partial_x q \qquad &\text{in} \quad \mathcal{E}\\[5pt]
			\partial_t q = -R_0\partial_x \varphi + \dfrac{dq_i}{dt} \Big( e^{\frac{x+r}{\widetilde{\mu}}} \chi_- + \frac{\sinh(\frac{\ell-x}{\widetilde{\mu}})}{\sinh(\frac{\ell-r}{\widetilde{\mu}})}\chi_+\Big)\qquad &\text{in} \quad \mathcal{E}\\[7pt]
			\dfrac{dq_i}{dt} = \dfrac{1}{	\alpha + \mathcal{T}_{\widetilde{\mu}}(h)}  \Big( \dfrac{ \eps}{2}\left \llbracket\dfrac{1}{h^2}\right\rrbracket  q_i^2- \left\llbracket \mathcal{F} \right\rrbracket  - \dfrac{P_0}{\epsilon}P_{\rm ch}\Big)\\[7pt]
			\dfrac{dP_{\rm ch}}{dt}  =- \kappa P_{\rm ch} +\dfrac{\epsilon}{|\mathcal{E}_+|}q_i
		\end{cases}
	\end{equation}
	where $\chi_\pm$ are the characteristic functions of $\mathcal{E}_\pm$. We now denote by $U= (U_1, U_2, U_3, U_4)$ and  $U^{\rm in}$ respectively the quadruples of unknowns $(\zeta, q, q_i, P_{\rm ch})$ and initial data $(\zeta^{\rm in}, q^{\rm in}, q_i^{\rm in}, P_{\rm ch}^{\rm in})$, with $P_{\rm ch}^{\rm in}$ as in \eqref{conds-in}. Then, after plugging the third line of \eqref{ODE-0} into the second one, we write the Cauchy problem related to \eqref{ODE-0} in the compact form
	\begin{equation}\label{compact-ODE}\begin{cases}
			\dfrac{d U}{dt}= \Phi (U),\\[5pt]
			U(0)= U^{\rm in},
		\end{cases}
	\end{equation}with 
	\begin{align*}
		&\Phi_1(U)= -\partial_x U_2,\\
		&\Phi_2(U)= -R_0\partial_x \left[\varphi (U_1,U_2)\right] + \Phi_3(U) 
		\Big( e^{\frac{x+r}{\widetilde{\mu}}} \chi_- + \tfrac{\sinh(\frac{\ell-x}{\widetilde{\mu}})}{\sinh(\frac{\ell-r}{\widetilde{\mu}})}\chi_+\Big), \\
		&\Phi_3(U) = \dfrac{1}{	\alpha + \mathcal{T}_{\widetilde{\mu}}(h(U_1))}  \Big( \frac{ \eps}{2}\left \llbracket\frac{1}{h^2(U_1)}\right\rrbracket  U_3^2- \left\llbracket \mathcal{F}(U_1,U_2) \right\rrbracket  - \dfrac{P_0}{\epsilon}U_4\Big),\\
		&\Phi_4(U)  =- \kappa U_4 +\dfrac{\epsilon}{|\mathcal{E}_+|}U_3.
	\end{align*}
	
	By standard elliptic regularity theory, we know that $R_0$ in \eqref{R0} is a bounded operator from $H^m (\mathcal{E})$ to $H^{m+2}\cap H^1_0(\mathcal{E})$ for $m\geq -1$, thus $R_0\partial_x$ is a bounded operator from $H^m (\mathcal{E})$ to $H^{m+1} \cap H^1_0(\mathcal{E})$ for $m\geq 0$. Let us denote by
	$\mathcal{M}$ the open subset in $H^m(\mathcal{E})\times H^{m+1}(\mathcal{E})\times \mathbb{R}^2$ such that $\inf_{\mathcal{E}} h>0$. Then, 
	after recalling the definitions of $\varphi$ and $\mathcal{F}$ in \eqref{momentum-flux} and \eqref{calF}, 
	Sobolev trace and product estimates imply that $\Phi$ is a smooth map from $\mathcal{M}$ to $H^m(\mathcal{E})\times H^{m+1}(\mathcal{E})\times \mathbb{R}^2$ for $m\geq 1$. Therefore, applying Cauchy-Lipschitz theorem yields the existence of $T>0$ and a unique solution $U\in C^1([0,T); H^m(\mathcal{E})\times H^{m+1}(\mathcal{E})\times \mathbb{R}^2)$ to \eqref{compact-ODE}. In addition, exploiting the fourth component of the vectorial equation in \eqref{compact-ODE}, we have that $P_{\rm ch}$ belongs to $C^2([0, T); \mathbb{R})$. 
	Thanks to the assumption on $q^{\rm in}$ and  $q_i^{\rm in}$ in \eqref{initial-ass}, we have the equivalence 
	$$\llbracket q\rrbracket=0,  \quad \langle  q\rangle=q_i\quad \iff\quad \frac{d \llbracket q\rrbracket}{dt} =0,  \qquad   \dfrac{d\langle q\rangle}{dt}=\dfrac{dq_i}{dt} $$ and the regularity of the solution together with the second equation in \eqref{ODE-0} guarantees that the transmission conditions \eqref{trans-conditions} are satisfied on $[0,T)$. 
	Finally, coming back from \eqref{ODE-system-disp} to \eqref{added-evo-qi} concludes the proof of existence and uniqueness. Using more refined Moser product estimates, it can be shown that 
	\begin{equation*}
		\|\Phi (U)\|_{H^m(\mathcal{E})\times H^{m+1}(\mathcal{E})\times \mathbb{R}^2}\leq C_{\widetilde{\mu}}\Big( \Big\|U_1, U_2, \frac{1}{h(U_1)}\Big\|_{L^\infty(\mathcal{E})}, |U_4|
		\Big)\|U\|_{H^m(\mathcal{E})\times H^{m+1}(\mathcal{E})\times \mathbb{R}^2}
	\end{equation*}where $C_{\widetilde{\mu}} $ is a smooth non-decreasing function of its arguments. Therefore, by Grönwall's inequality, the solution to \eqref{compact-ODE} satisfies
	\begin{equation*}
		\|U(t)\|_{H^m(\mathcal{E})\times H^{m+1}(\mathcal{E})\times \mathbb{R}^2} \leq e ^{\int_0^t m(s)ds}  \|U^{\rm in}\|_{H^m(\mathcal{E})\times H^{m+1}(\mathcal{E})\times \mathbb{R}^2}
	\end{equation*}with $$m(s)=C_{\widetilde{\mu}}\Big( \Big\| U_1(s), U_2(s), \frac{1}{h(U_1(s))}\Big\|_{L^\infty(\mathcal{E})}, |U_4(s)|\Big). $$ 
	Thus, if the maximal existence time $T_{\rm max}$ is finite, 
	then necessarily one of the arguments of $C_{\widetilde{\mu}}$ 
	must blow up as $t\rightarrow T^-_{\rm max}$, otherwise the solution can be continued in $[0, T_{\rm max} + \tau)$ for some $\tau>0$ and we find a contradiction.
\end{proof}

\subsection{Toward wave-spring-mass systems} We now discuss the connection between the transmission problems we have derived in Subsection \ref{subsec-transmission} and the wave-spring-mass systems that will be derived in Section \ref{sec-springmass}.
Indeed, as a direct consequence of  \eqref{mean-qi}, the evolution equation \eqref{added-evo-qi} for the discharge in the interior domain can be formulated as an evolution equation for the mean surface elevation in the chamber.
\begin{proposition}The nonlinear first-order ODEs in Proposition \ref{prop-evo-qi} can be reformulated as  the following nonlinear second-order integro-differential equations:
	for $\widetilde{\mu}=0$,
	\begin{equation}\label{ODE-mean-0}
		\alpha |\mathcal{E}_+|\frac{d^2 \overline{\zeta}}{dt^2} + P_0\left(I - \kappa \mathcal{D}\right)\overline{\zeta} + \eps \frac{|\mathcal{E}_+|^2}{2}\left\llbracket\frac{1}{h^2}\right\rrbracket \Big(\frac{d\overline \zeta}{dt}\Big)^2 = - \llbracket \zeta \rrbracket,
	\end{equation}
	with $\mathcal{D}$ as in  \eqref{delay}; 
	for $\widetilde{\mu}\neq0$,
	\begin{equation}\label{ODE-mean}
		\left(\alpha + \mathcal{T}_{\widetilde{\mu}}(h)\right)	|\mathcal{E}_+|\frac{d^2\overline \zeta}{dt^2} + P_0 \left(I-\kappa\mathcal{D}\right)\overline\zeta -\eps\frac{|\mathcal{E}_+|^2}{2}\left\llbracket\frac{1}{h^2}\right\rrbracket \Big(\frac{d\overline \zeta}{dt}\Big)^2 =- \left\llbracket \mathcal{F}\right\rrbracket, 
	\end{equation}
	with $\mathcal{T}_{\widetilde{\mu}}(h)$ and $\mathcal{F}$ as in \eqref{T-mu}-\eqref{calF}.
\end{proposition}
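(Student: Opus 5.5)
The plan is to substitute the relation \eqref{mean-qi}, namely $q_i = |\mathcal{E}_+|\,\frac{d\overline\zeta}{dt}$, into the first-order ODEs \eqref{added-evo-qi-0} and \eqref{added-evo-qi} of Proposition \ref{prop-evo-qi}.

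\emph{Step 1: justify \eqref{mean-qi} in both cases.} Since $|\mathcal{E}_+|$ is constant, the continuity equation in \eqref{trans-sys} gives
\[
|\mathcal{E}_+|\frac{d\overline\zeta}{dt}=\int_{\mathcal{E}_+}\partial_t\zeta=-\int_r^\ell \partial_x q = q(r)-q(\ell).
\]
The wall condition in \eqref{bound-cond} gives $q(\ell)=0$. The transmission conditions \eqref{trans-conditions} give $q(r)=\langle q\rangle+\llbracket q\rrbracket/2=q_i$. Hence \eqref{mean-qi} holds, and for regular solutions we can also differentiate it in time to get $\frac{dq_i}{dt}=|\mathcal{E}_+|\frac{d^2\overline\zeta}{dt^2}$. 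The solutions in Theorems \ref{theo-trans-wp} and \ref{theo-trans-wp-disp} have $q_i\in H^{m+1}$ or $C^1$ in time, so $\overline\zeta$ is $C^2$ and the differentiation is legitimate.

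\emph{Step 2: substitute.} Replace $\frac{dq_i}{dt}$ by $|\mathcal{E}_+|\frac{d^2\overline\zeta}{dt^2}$ and $q_i^2$ by $|\mathcal{E}_+|^2\bigl(\frac{d\overline\zeta}{dt}\bigr)^2$ in \eqref{added-evo-qi-0}. Then move $P_0(I-\kappa\mathcal{D})\overline\zeta$ to the left-hand side. This gives \eqref{ODE-mean-0}: the quadratic coefficient becomes $\eps\frac{|\mathcal{E}_+|^2}{2}\llbracket 1/h^2\rrbracket$ and the right-hand side is $-\llbracket\zeta\rrbracket$. The same substitution in \eqref{added-evo-qi}, with the factor $\alpha+\mathcal{T}_{\widetilde\mu}(h)$ kept in front, gives \eqref{ODE-mean}. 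Here the sign of the quadratic term and the right-hand side $-\llbracket\mathcal{F}\rrbracket$ are inherited unchanged. In both cases $\mathcal{D}$ acts on $\overline\zeta$ exactly as before, so the equations become integro-differential in $\overline\zeta$.

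\emph{Expected obstacle.} There is no real difficulty; the only point needing care is Step 1. One must make sure the trace identity $q(r)=q_i$ comes from both transmission conditions together, and that enough time regularity is available to differentiate \eqref{mean-qi}. Conversely, given a solution of the second-order equation, setting $q_i=|\mathcal{E}_+|\frac{d\overline\zeta}{dt}$ recovers the first-order ODEs, so the two formulations are equivalent.
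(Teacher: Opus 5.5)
Your proposal is correct and follows the paper's route. The paper presents the proposition as a direct consequence of \eqref{mean-qi}, which it obtains from item \eqref{discharge-con} of Assumption \ref{assumption} and the wall condition at $x=\ell$. It then substitutes $q_i=|\mathcal{E}_+|\frac{d\overline\zeta}{dt}$ into \eqref{added-evo-qi-0} and \eqref{added-evo-qi}, and your substitution, including the signs of both quadratic terms, reproduces \eqref{ODE-mean-0} and \eqref{ODE-mean}.
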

The constrained shallow water models \eqref{dimless-shallow-set}-\eqref{contact-con} and \eqref{pressure-mean} can therefore be interpreted as transmission problems between the open water and chamber domains, with transmission conditions determined by the evolution of the mean surface elevation in the chamber. The dynamics \eqref{ODE-mean} is a nonlinear harmonic oscillator with a delay term, driven by the jump of $\zeta$ or $\mathcal{F}$ (according to wheter $\widetilde{\mu}=0$ or $\widetilde{\mu}\neq 0$) across the contact points. This second-order equation explicitly manifests the oscillatory nature of the fluid motion within the chamber, which is the main physical mechanism governing the OWC device, together with nonlinear effects inherited from the wave motion and a delay due to structural damping at the turbine.\\
The derivation of \eqref{ODE-mean} naturally motivates a different approach in the modeling of the dynamics of OWCs, used in ocean engineering \cite{Wang02, Reza18}, where the upper part of the fluid in the chamber is treated as a rigid layer free to move above the lower fluid and having the mean surface elevation $\overline{\zeta}$ as its degree of freedom. The goal of the next section is then to reformulate the shallow water models \eqref{dimless-shallow-set} in the presence of an OWC within this new modeling framework and to derive a second-order equation for the motion of the rigid layer analogous to \eqref{ODE-mean}.

\section{Wave-spring-mass systems}\label{sec-springmass}
In this section, we adopt a different modeling approach for the dynamics of OWCs in shallow water. Following the formulation used in \cite{Wang02, Reza18}, we model the fluid in the chamber as a multiphase system: the upper part is treated as a rigid layer, which we refer to as the \emph{water column}, that is free to move vertically above the lower fluid, see Figure \ref{owc-mass}.

\begin{figure}
	\includegraphics[scale=0.6]{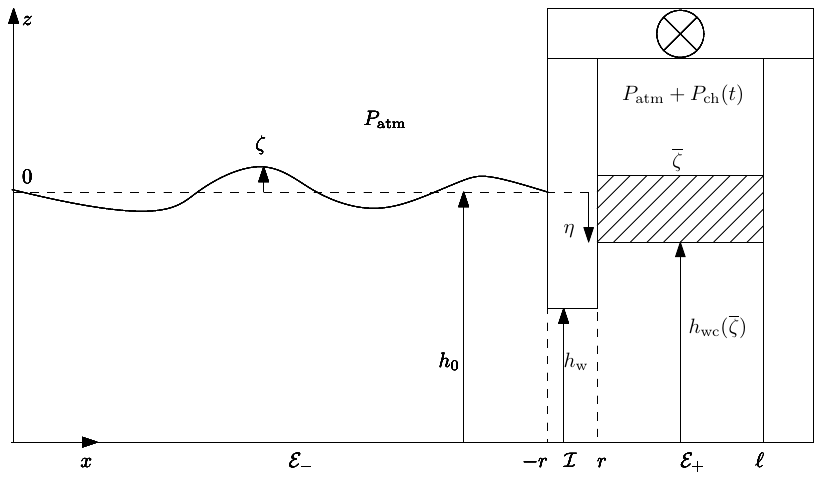}
	\caption{Configuration of the OWC in the new approach: inside the chamber, the rigid water column vertically moves above the lower fluid.}
	\label{owc-mass}
\end{figure}
The consequence of this change of modeling is twofold. First, inside the chamber, the surface elevation is replaced as an unknown by the elevation of the interface separating the two phases, which is assumed to be parametrized by a function $\eta(t,x)$. Therefore, instead of \eqref{dimless-shallow-set}, in the chamber domain we consider the dimensionless equations
\begin{equation}\label{inter-system}
	\begin{cases}
		\partial_t \eta + \partial_x q=0 \\[5pt]
		\Big(1-\widetilde{\mu}^2\partial_x^2\Big) \partial_t q + \eps \partial_x \Big(\dfrac{q^2}{\frak h}\Big) + \frak{h} \partial_x \eta=-\dfrac{\frak{h}}{\eps}\partial_x \underline{\mathcal{P}}
	\end{cases}\quad \text{in}\quad \mathcal{E}_+,
\end{equation} 
where $\frak{h}=1 + \eps\eta$ is the dimensionless fluid height below the interface 
and $\underline{\mathcal{P}}$ is the dimensionless interface pressure deviation from atmospheric pressure.
Second, the presence of the rigid water column introduces congestion into \eqref{inter-system} through a contact constraint analogous to \eqref{contact-con}. More precisely, we assume that the interface remains in contact with the bottom of the water column throughout the motion. Since the bottom is assumed to be flat and denoting by $\eta_{\rm eq}$ its vertical position at the equilibrium, it can be parametrized by the time-dependent function $ \eta_{\rm eq}+\overline{\zeta}$. Here, the mean surface elevation $\overline{\zeta}$ is no longer determined by the fluid equations through \eqref{mean-sur} but is instead an independent unknown representing the degree of freedom of the water column motion. The dimensionless contact constraint on the interface then reads
\begin{equation}\label{wc-con}
	\eta=   \eta_{\rm eq}+\overline{\zeta}  \qquad \text{in}\quad  \mathcal{E}_+.
\end{equation}

Thus, we are led to study the constrained shallow water models
\begin{equation}\label{NSW-open-int}
	\begin{cases}
		\partial_t \zeta + \partial_x q=0 \\[5pt]
		\Big(1-\widetilde{\mu}^2\partial_x^2\Big) \partial_t q + \eps \partial_x \Big(\dfrac{q^2}{h}\Big) + h \partial_x \zeta=  -\dfrac{h}{\eps}\partial_x{\underline{P}}\\[5pt]
	\end{cases} \quad\text{in} \quad\mathcal{E}_-\cup \mathcal{I}
\end{equation}
and  \begin{equation}\label{NSW-ch}
	\begin{cases}
		\partial_t \eta + \partial_x q=0 \\[5pt]
		\Big(1-\widetilde{\mu}^2\partial_x^2\Big) \partial_t q + \eps \partial_x \Big(\dfrac{q^2}{\frak{h}}\Big) + \frak{h} \partial_x \eta=-\dfrac{\frak{h}}{\eps}\partial_x \underline{\mathcal{P}}
	\end{cases}\quad \text{in}\quad \mathcal{E}_+
\end{equation} subject to the constraints
\begin{equation}\label{constraints}
	\underline{P}=0 \quad \text{in} \quad \mathcal{E}_-, \qquad \zeta=\zeta_{\rm w}\quad \text{in} \quad \mathcal{I},\qquad \eta=\eta_{\rm eq} +\overline\zeta \quad \text{in} \quad \mathcal{E}_+
\end{equation} and complemented by the far-field and wall conditions \eqref{bound-cond}. Analogously to \eqref{energy-balance}, the systems \eqref{NSW-open-int}-\eqref{NSW-ch} satisfy the local energy balances 
\begin{equation}\label{energy-balances}\begin{aligned}
		&	\partial_t e + \partial_x f = \frac{\underline{P} \partial_x q  }{\eps}+3\eps\widetilde{\mu}^2 R, \qquad &\text{in}\quad \mathcal{E}_-\cup \mathcal{I},\\[5pt]
		&	\partial_t \mathfrak{e} + \partial_x \mathfrak{f}  = \frac{\underline{\mathcal{P}} \partial_x q  }{\eps}+3\eps\widetilde{\mu}^2 	\mathfrak{R}, \qquad &\text{in}\quad \mathcal{E}_+,
	\end{aligned}
\end{equation}where $e(\zeta, q), f(\zeta, q, \underline{P}), R(\zeta, q)$ are as in \eqref{energy-density}-\eqref{R}, the fluid energy density and flux in the chamber domain are given by
\begin{equation}\label{energy-den-cham}
	\mathfrak{e}(\eta, q)= 	\mathfrak{e}_{\rm gra}(\eta)+ 	\mathfrak{e}_{\rm kin}(\eta, q)=\frac{\eta^2}{2}  + \frac{q^2}{2\frak{h}} + \widetilde{\mu}^2 \frac{(\partial_x q)^2}{2\frak{h}}
\end{equation}and 
\begin{equation}\label{frakflux}
	\mathfrak{f}(\eta, q, \underline{\mathcal{P}})= q\Big( \eta + \frac{\underline{\mathcal{P}}}{\eps}+ \eps\frac{q^2}{2\frak{h}^2} -\widetilde{\mu}^2\frac{\partial_x \partial_t q}{\frak{h}}  \Big),
\end{equation}
and the remainder term is 
\begin{equation}\label{frakR}
	\mathfrak{R}(\eta, q)=\frac{(\partial_x q)^3}{6\frak{h}^2}+\frac{q\partial_x \eta\partial_t \partial_x q}{3\frak{h}^2}.
\end{equation}
The fluid energy related to \eqref{NSW-open-int}-\eqref{NSW-ch} is defined by 
\begin{equation}\label{fluid-energy2}
	E_{\rm flu}(\zeta, \eta, q)= \int_{\mathcal{E}_-\cup\ \mathcal{I}}e (\zeta,q) + \int_{\mathcal{E}_+}\mathfrak{e} (\eta,q).
\end{equation}

\begin{proposition}\label{prop-model3}
The constrained shallow water models \eqref{NSW-open-int}-\eqref{constraints} can be reformulated as the following systems in the open water, interior and chamber domains: 
	\begin{equation}\label{eq-open}
		\begin{cases}
			\partial_t \zeta + \partial_x q=0 \\[5pt]
			\Big(1-\widetilde{\mu}^2\partial_x^2\Big) \partial_t q + \eps \partial_x \Big(\dfrac{q^2}{h}\Big) + h \partial_x \zeta=  0\\[5pt]
		\end{cases} \quad\text{in} \quad\mathcal{E}_-,
	\end{equation}
	\begin{equation}\label{eq-int}
		\begin{cases}
			q=q_i(t), \quad \zeta=\zeta_{\rm w} \\[5pt]
			\dfrac{dq_i}{dt}=  -\dfrac{h_{\rm w}}{\eps}\partial_x{\underline{P}}\\[5pt]
		\end{cases} \quad \text{in} \quad \mathcal{I},
	\end{equation}
	with $h_{\rm w}=1 + \eps\zeta_{\rm w}$, and
	\begin{equation}\label{eq-chamber}
		\begin{cases}
			q= \dfrac{d\overline \zeta}{dt}(\ell - x), \quad \eta= \eta_{\rm eq}+ \overline{\zeta}\\[7pt]
			(\ell - x)	\Big(\dfrac{d^2\overline \zeta}{dt^2} -\dfrac{2\eps}{h_{\rm wc}}  \Big(\dfrac{d\overline \zeta}{dt}\Big)^2 \Big)= - \dfrac{h_{\rm wc}}{\eps} \partial_x \underline{\mathcal{P}}
		\end{cases}\quad \text{in}\quad \mathcal{E}_+,
	\end{equation}with $h_{\rm wc}(\overline\zeta)= 1 +\eps\eta_{\rm eq} + \eps\overline{\zeta}$.
\end{proposition}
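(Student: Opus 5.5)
The proof is a direct substitution of the constraints \eqref{constraints} into \eqref{NSW-open-int}--\eqref{NSW-ch}, treating the three subdomains separately.

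\emph{Open water domain.} In $\mathcal{E}_-$ we have $\underline{P}=0$, so the source term $-\frac{h}{\eps}\partial_x\underline{P}$ vanishes and \eqref{eq-open} is immediate.

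\emph{Interior domain.} In $\mathcal{I}$ we repeat the argument that led to \eqref{NSW-interior}. Since $\zeta=\zeta_{\rm w}$ and the structure is fixed, $\partial_t\zeta=0$. The continuity equation then gives $\partial_x q=0$, so $q=q_i(t)$. Consequently the dispersive term $\widetilde{\mu}^2\partial_x^2\partial_t q$ vanishes. The flux term $\eps\partial_x(q_i^2/h_{\rm w})$ and the hydrostatic term $h_{\rm w}\,d\zeta_{\rm w}/dx$ vanish as well, because the bottom of the structure is flat. This yields \eqref{eq-int}.

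\emph{Chamber domain.} In $\mathcal{E}_+$ the constraint $\eta=\eta_{\rm eq}+\overline\zeta(t)$ is spatially constant. Hence $\partial_x\eta=0$ and $\frak{h}=1+\eps\eta_{\rm eq}+\eps\overline\zeta=h_{\rm wc}$ depends only on time.
\begin{itemize}
\item The continuity equation gives $\partial_x q=-\frac{d\overline\zeta}{dt}$. Integrating from $x$ to $\ell$ and using the wall condition $q(\ell)=0$ from \eqref{bound-cond}, we get $q=\frac{d\overline\zeta}{dt}(\ell-x)$.
\item Since $q$ is affine in $x$, the dispersive term $\widetilde{\mu}^2\partial_x^2\partial_t q$ vanishes identically, and $h_{\rm wc}\partial_x\eta=0$.
\item We have $\partial_t q=(\ell-x)\frac{d^2\overline\zeta}{dt^2}$.
\item The convective term is
\[
\eps\partial_x\Big(\frac{q^2}{h_{\rm wc}}\Big)=\frac{\eps}{h_{\rm wc}}\Big(\frac{d\overline\zeta}{dt}\Big)^2\partial_x(\ell-x)^2=-\frac{2\eps}{h_{\rm wc}}(\ell-x)\Big(\frac{d\overline\zeta}{dt}\Big)^2.
\]
\end{itemize}
Inserting these expressions into the momentum equation gives exactly the second line of \eqref{eq-chamber}.

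The only point needing care is the sign and factor of the convective term in the chamber. The key observation is that the affine profile of $q$ kills the dispersive term, so no boundary information at $x=r$ enters at this stage; coupling to the other domains is deferred to the energy argument.
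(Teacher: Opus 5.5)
Your proposal is correct and follows essentially the same route as the paper: substitute the constraints \eqref{constraints} in each subdomain and use the wall condition $q(\ell)=0$ to get the affine chamber discharge $q=\frac{d\overline\zeta}{dt}(\ell-x)$, which removes the dispersive term. You spell out the details the paper leaves implicit, namely $\partial_x q=0$ in $\mathcal{I}$ via the flat bottom, and the sign and factor of the convective term $-\frac{2\eps}{h_{\rm wc}}(\ell-x)\big(\frac{d\overline\zeta}{dt}\big)^2$; both are right.
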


\begin{proof}Substituting \eqref{constraints} into \eqref{NSW-open-int}, we obtain the systems in the open water and interior domains, analogously to the derivation of \eqref{NSW-exterior} and \eqref{NSW-interior}. For the chamber domain, we use  \eqref{wc-con}, together with the wall condition in \eqref{bound-cond}, to deduce from \eqref{NSW-ch} the system
	\begin{equation}\label{NSW-chamber}
		\begin{cases}
			q= \dfrac{d\overline \zeta}{dt}(\ell - x)\\[7pt]
			(\ell - x)	\Big(\dfrac{d^2\overline \zeta}{dt^2} -\dfrac{2\eps}{h_{\rm wc}}  \Big(\dfrac{d\overline \zeta}{dt}\Big)^2 \Big)= - \dfrac{h_{\rm wc}}{\eps} \partial_x \underline{\mathcal{P}}
		\end{cases}\quad \text{in}\quad \mathcal{E}_+,
	\end{equation}
	where $h_{\rm wc}(\overline\zeta)= 1 +\eps\eta_{\rm eq} + \eps\overline{\zeta}$.
Two similarities can be observed between the fluid dynamics in the chamber domain and in the interior domain. First, in the momentum equation of \eqref{NSW-chamber} the dispersive term disappears since the discharge is linear with respect to $x$. Second, the source term in the momentum equation does not vanish, unlike in \eqref{NSW-exterior}, because the analogue of the surface pressure constraint \eqref{surpres-ext} does not hold for the interface pressure $\underline{\mathcal{P}}$. Indeed, it is an unknown that depends on the motion of the water column and acts as a Lagrange multiplier associated with the contact constraint \eqref{wc-con}. 
\end{proof}

\subsection{Water column dynamics}In order to close the systems derived in Proposition \ref{prop-model3}, we need to determine the evolution of the mean surface elevation $\overline{\zeta}$ in the chamber domain, which governs the water column dynamics. To this end, we assume that the water column moves with uniform vertical velocity. Therefore, its motion obeys Newton's second law
\begin{equation}\label{newton0}
	m_{\rm wc} \frac{d^2 \overline\zeta}{dt^2} = - m_{\rm wc} g + \int_{\mathcal{E}_+}\left( \underline{\mathcal{P}} - P_{\rm ch}\right)
\end{equation}where $m_{\rm wc}$ is the water column mass, $\underline{\mathcal{P}}$ is the interface pressure deviation in the chamber and $P_{\rm ch}$ is the air pressure deviation. 
Passing to the dimensionless form (see Appendix \ref{appsec-newton}) and using \eqref{Pch-dimless}, \eqref{newton0} becomes
\begin{equation}\label{newton2}
	T_{\rm buo}^2 \frac{d^2\overline\zeta}{dt^2} = -\frac{m_{\rm wc}}{\eps} - P_0\left(I-\kappa \mathcal{D} \right)\overline\zeta+ \frac{1}{\eps |\mathcal{E}_+|}\int_{\mathcal{E}_+} \underline{\mathcal{P}}
\end{equation}
where $P_0$ is as in \eqref{pressure-mean} and $2\pi T_{\rm buo}$ is the dimensionless hydrostatic buoyancy period of the motion defined by 
\begin{equation}\label{buo-period}
	T^2_{\rm buo}= \frac{m_{\rm wc} h_0^2}{ L^2}.
\end{equation}
The notion of hydrostatic buoyancy period naturally arises from an equivalent formulation of \eqref{newton2}. Indeed, combining the dimensionless Archimedes' principle
\begin{equation}\label{archimede}
	m_{\rm wc}=-\eps\eta_{\rm eq}
\end{equation} with the dimensionless version of \eqref{wc-con}, we obtain that
\begin{equation}\label{newton0-ref}
	T^2_{\rm buo} \frac{d^2 \overline\zeta}{dt^2} = - \overline\zeta - P_0\left(I-\kappa \mathcal{D} \right)\overline\zeta+ \eta + \frac{1}{\eps|\mathcal{E}_+|} \int_{\mathcal{E}_+} \underline{\mathcal{P}} .
\end{equation} 
In particular, the last two terms on the right-hand side are responsible for the coupling between the water column dynamics and the fluid dynamics in the chamber domain.	 
Associated with the motion of the water column, we introduce the mechanical energy given by the sum of the kinetic, gravitational and elastic energies. In dimensionless form, it reads
\begin{equation}\label{wc-energy}
	E_{\rm wc}(\overline{\zeta})= \frac{T_{\rm buo}^2|\mathcal{E}_+|}{2} \Big(\frac{d\overline \zeta}{dt}\Big)^2 + \frac{m_{\rm wc} |\mathcal{E}_+|}{\eps}\overline\zeta +  \frac{\tau}{2}\overline\zeta^2,
\end{equation} where the last term on the right-hand side coincides with the elastic energy \eqref{ela-energy-dimless}. Then, it follows from \eqref{newton2} that
\begin{equation}\label{der-Ewc}
	\frac{d }{dt}E_{\rm wc} =|\mathcal{E}_+| \frac{d\overline \zeta}{dt} \Big(T_{\rm buo}^2 \frac{d^2\overline\zeta}{dt^2} +\frac{m_{\rm wc}}{\eps} + P_0 \overline\zeta\Big)= \frac{d\overline \zeta}{dt} \Big(\tau \kappa \mathcal{D}\overline \zeta + \frac{1}{\eps} \int_{\mathcal{E}_+}\underline{\mathcal{P}}\Big).\bigskip
\end{equation}

\subsection{Coupled problems between the open water, interior and chamber domains}At this point, we have derived the three systems \eqref{eq-open}, \eqref{eq-int} and \eqref{eq-chamber}, which separately govern the fluid dynamics in the open water, interior and chamber domains, respectively.  As already seen in Section \ref{sec-constramodels}, these systems must be coupled through suitable coupling conditions in order to obtain a closed formulation. To this end, we consider again Assumption \ref{assumption} but with a different definition of the total energy of the system that takes into account the new modeling in the OWC chamber. We define it as the sum of the fluid energy and the mechanical energy of the water column,
\begin{equation}\label{total-energy2}
	\widetilde{E}_{\rm tot}(\zeta,\eta, q, \overline\zeta )= E_{\rm flu}(\zeta, \eta, q) + E_{\rm wc}(\overline\zeta)
\end{equation} where $E_{\rm flu}$ and $E_{\rm wc}$ are respectively as in \eqref{fluid-energy2} and \eqref{wc-energy}. Thus, we introduce the next assumption:

\begin{assumption}\label{assumption2}The following properties hold:
	\begin{equation}
		\tag{i}  \label{discharge-con2}
		q_{|_{\mathcal{E}_-}} =q_i\quad \text{at} \quad x=-r, \qquad 	q_{|_{\mathcal{E}_+}} =q_i\quad \text{at} \quad x=r;
	\end{equation}
	\begin{equation}\label{tot-ene-conservation2} 
		\tag{ii} 
		\text{when $\kappa=0$ in \eqref{Pch-dimless},} \quad  
		\frac{d}{dt}\widetilde{E}_{\rm tot} = 3\eps\widetilde{\mu}^2 \Big(\int_{\mathcal{E}_- \cup \ \mathcal{I}} R + \int_{\mathcal{E}_+} \mathfrak{R}\Big)= O(\eps\mu),
	\end{equation} with $R$ and $\mathfrak{R}$ respectively as in \eqref{R} and \eqref{frakR}. 
	
\end{assumption}
Analogously to Proposition \ref{prop-conserve-coup} we derive an explicit coupling condition at the contact points starting from item \eqref{tot-ene-conservation2} of Assumption \ref{assumption2}.
\begin{proposition}
	Consider regular solutions $(\zeta, \eta, q, \overline\zeta)$ to the fluid-water column systems \eqref{NSW-open-int}-\eqref{constraints} and \eqref{newton0-ref} with $\kappa=0$. Then, item \eqref{tot-ene-conservation2}  in Assumption \ref{assumption2} holds if and only if 
	\begin{equation}\label{f-con2}
		\left\llbracket f\right\rrbracket^*_{-r}= \mathfrak{f}(r) - f_{|_{\mathcal{I}}}(r),
	\end{equation} 
	with $f$ and $\mathfrak{f}$ respectively as in \eqref{flux} and \eqref{frakflux}.
	In addition, using item \eqref{discharge-con2} of Assumption \ref{assumption2},  \eqref{f-con2} reduces to
	\begin{equation}\label{bra-con2}
		\left\llbracket \zeta + \frac{\underline{P}}{\eps}+ \mathfrak{E} \right\rrbracket^*_{-r}  =  \Big(\eta+\frac{\underline{\mathcal{P}}}{\eps} +	\widetilde{\mathfrak{E}}  \Big)(r)-\Big(\zeta_{|_{\mathcal{I}}} +\frac{\underline{P}_{|_{\mathcal{I}}}}{\eps}+\mathfrak{E}_{|_{\mathcal{I}}}\Big)(r)
	\end{equation}or, equivalently, 
	\begin{equation}\label{bra-con-equi2}
		\frac{\llbracket\underline{P}_{|_{\mathcal{I}}}\rrbracket}{\eps} =\Big( \eta +\frac{\underline{\mathcal{P}}}{\eps}+	\widetilde{\mathfrak{E}}\Big)(r)- \Big( \zeta_{|_{\mathcal{E}_-}}+ \frac{\underline{P}_{|_{\mathcal{E}_-}}}{\eps}+  \mathfrak{E}_{|_{\mathcal{E}_-}} \Big)(-r),
	\end{equation}
	with $\mathfrak{E}$ as in \eqref{frak-E}  and 
	\begin{equation}\label{frak-Etilde}
		\widetilde{\mathfrak{E}}(\eta, q)=  \eps  \frac{q^2}{2h^2}-\widetilde{\mu}^2\frac{\partial_x\partial_t q}{h}.
	\end{equation}
\end{proposition}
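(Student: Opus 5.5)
The plan is to mimic the proof of Proposition \ref{prop-conserve-coup}, now integrating the two local energy balances \eqref{energy-balances} separately over $\mathcal{E}_-\cup\mathcal{I}$ and over $\mathcal{E}_+$. First, integrate the first balance over $\mathcal{E}_-=(-\infty,-r)$ and over $\mathcal{I}=(-r,r)$. Using the far-field condition in \eqref{bound-cond} and the constraint $\underline{P}=0$ in $\mathcal{E}_-$, this gives the boundary contributions $f_{|_{\mathcal{E}_-}}(-r)+f_{|_{\mathcal{I}}}(r)-f_{|_{\mathcal{I}}}(-r)$. The pressure source term survives only in $\mathcal{I}$, where $\partial_x q=\partial_x q_i=0$, so it vanishes. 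Second, integrate the chamber balance over $\mathcal{E}_+=(r,\ell)$. The wall condition $q(\ell)=0$ kills $\mathfrak{f}(\ell)$, leaving $-\mathfrak{f}(r)$, and the source term becomes $\frac1\eps\int_{\mathcal{E}_+}\underline{\mathcal{P}}\,\partial_x q$.

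The key step is to identify this chamber source term. By \eqref{eq-chamber} we have $\partial_x q=-\frac{d\overline\zeta}{dt}$, which is independent of $x$, so
\begin{equation*}
\frac1\eps\int_{\mathcal{E}_+}\underline{\mathcal{P}}\,\partial_x q=-\frac{d\overline\zeta}{dt}\,\frac1\eps\int_{\mathcal{E}_+}\underline{\mathcal{P}}.
\end{equation*}
By \eqref{der-Ewc} with $\kappa=0$, this equals exactly $-\frac{dE_{\rm wc}}{dt}$. One should check that \eqref{newton0-ref} is equivalent to \eqref{newton2} via \eqref{archimede} and \eqref{wc-con}, so that \eqref{der-Ewc} applies.

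Summing the pieces and recalling \eqref{fluid-energy2} and \eqref{total-energy2}, I expect to obtain
\begin{equation*}
\frac{d\widetilde{E}_{\rm tot}}{dt}=\Big(\mathfrak{f}(r)-f_{|_{\mathcal{I}}}(r)\Big)-\llbracket f\rrbracket^*_{-r}+3\eps\widetilde{\mu}^2\Big(\int_{\mathcal{E}_-\cup\mathcal{I}}R+\int_{\mathcal{E}_+}\mathfrak{R}\Big).
\end{equation*}
Item \eqref{tot-ene-conservation2} is then equivalent to \eqref{f-con2}. The main point of care here is the sign bookkeeping of the boundary terms and the vanishing of the interior pressure work.

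For the reduction, insert \eqref{discharge-con2}: all the fluxes at $\pm r$ share the common factor $q_i$. The flux $\mathfrak{f}$ in \eqref{frakflux} has the same structure as $f$ with dynamic pressure $\widetilde{\mathfrak{E}}$, where the dispersive coefficient is $\widetilde\mu^2$ rather than $\widetilde\mu^2/3$, and with $h$ replaced by $\frak h$. Factoring out $q_i$ gives \eqref{bra-con2}. Division by $q_i$ is legitimate where $q_i\neq0$, which is the same formal step as in Proposition \ref{prop-conserve-coup}. For the equivalent form \eqref{bra-con-equi2}, expand the left jump in \eqref{bra-con2} and move all interior traces to one side. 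The terms $\zeta_{|_{\mathcal{I}}}+\mathfrak{E}_{|_{\mathcal{I}}}=\zeta_{\rm w}+\eps q_i^2/(2h_{\rm w}^2)$ take equal values at $\pm r$ for the flat-bottomed structure, using \eqref{eq-int} and \eqref{frakE-red}, so they cancel. Only $\llbracket\underline{P}_{|_{\mathcal{I}}}\rrbracket/\eps$ remains on the left, which yields \eqref{bra-con-equi2}.
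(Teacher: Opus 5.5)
Your proposal is correct and follows the paper's proof essentially step for step. You integrate the two local balances, use $\partial_x q_i=0$ to kill the interior pressure work, identify the chamber pressure work with $-dE_{\rm wc}/dt$ via \eqref{der-Ewc}, factor out $q_i$ using item \eqref{discharge-con2}, and use $\llbracket \zeta_{|_{\mathcal{I}}}+\mathfrak{E}_{|_{\mathcal{I}}}\rrbracket=0$ for the equivalent form. One harmless slip: since $\widetilde{\mu}^2=\mu/3$, the dispersive coefficient in $\mathfrak{E}$ is already $\widetilde{\mu}^2$ (not $\widetilde{\mu}^2/3$), so $\widetilde{\mathfrak{E}}$ differs from $\mathfrak{E}$ only in having $\frak{h}$ in place of $h$, which is what you read off from \eqref{frakflux} anyway.
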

\begin{proof}
	Integrating the local energy balances \eqref{energy-balances} over $\mathcal{E}_-\cup \mathcal{I}$ and $\mathcal{E}_+$,respectively, and using the far-field and wall conditions \eqref{bound-cond} yields the global energy balance
	\begin{equation*}
		\frac{dE_{\rm flu} }{dt}  - \mathfrak{f}(r) + f_{|_{\mathcal{I}}}(r) +\llbracket f \rrbracket_{-r}^* = \int_{\mathcal{E}_- \cup \ \mathcal{I}} \frac{\underline{P}\partial_x q}{\eps} + \int_{\mathcal{E}_+} \frac{\underline{\mathcal{P}}\partial_x q}{\eps} + 3\eps\widetilde{\mu}^2 \Big( \int_{\mathcal{E}_- \cup\ \mathcal{I}} R + \int_{\mathcal{E}_+}\mathfrak{R} \Big).
	\end{equation*} In view of the pressure constraint in \eqref{constraints} and the equations in the interior and chamber domains \eqref{eq-int}-\eqref{eq-chamber}, we obtain that
	\begin{equation*}\begin{aligned}
			\frac{dE_{\rm flu}}{dt}   - \mathfrak{f} (r) + f_{|_{\mathcal{I}}}(r) +\llbracket f \rrbracket_{-r}^* &= - \frac{1}{\eps}\frac{d\overline\zeta}{dt}\int_{\mathcal{E}_+}\underline{\mathcal{P}} + 3\eps\widetilde{\mu}^2\Big(\int_{\mathcal{E}_-\cup\ \mathcal{I}} R +\int_{\mathcal{E}_+} \mathfrak{R}\Big)\\[5pt]
			&=-\dfrac{dE_{\rm wc}}{dt} + \tau \kappa \mathcal{D}\overline\zeta \dfrac{d\overline\zeta}{dt}+ 3\eps\widetilde{\mu}^2\Big(\int_{\mathcal{E}_-\cup\ \mathcal{I}} R +\int_{\mathcal{E}_+} \mathfrak{R}\Big),
		\end{aligned}
	\end{equation*}where in the second equality we have used \eqref{der-Ewc}. It follows from \eqref{total-energy2} that
	\begin{equation*}
		\frac{d \widetilde{E}_{\rm tot}}{dt}=  \mathfrak{f} (r) - f_{|_{\mathcal{I}}}(r) -\llbracket f \rrbracket_{-r}^* + \tau\kappa\mathcal{D}\overline\zeta \frac{d\overline\zeta}{dt}+ 3\eps\widetilde{\mu}^2\Big(\int_{\mathcal{E}_-\cup\ \mathcal{I}} R +\int_{\mathcal{E}_+} \mathfrak{R}\Big)
	\end{equation*} and therefore item \eqref{tot-ene-conservation2} of Assumption \ref{assumption2} holds if and only if 
	\begin{equation}\label{f-cond}
		\llbracket f \rrbracket_{-r}^*=	\mathfrak{f}(r) - f_{|_{\mathcal{I}}}(r).
	\end{equation}

	Resorting to the definitions of the energy fluxes \eqref{flux} and \eqref{frakflux}, and combining item \eqref{discharge-con2} of Assumption \ref{assumption2} with \eqref{eq-int}, we find \eqref{bra-con2}. Equivalently, this condition can be written as
	\begin{equation*}
		\left	\llbracket \zeta_{|_{\mathcal{I}}}+ \frac{\underline{P}_{|_{\mathcal{I}}}}{\eps} + \mathfrak{E}_{|_{\mathcal{I}}} \right\rrbracket= \Big( \eta +\frac{\underline{\mathcal{P}}}{\eps}+ \widetilde{\mathfrak{E}}  \Big)(r)- \Big( \zeta_{|_{\mathcal{E}_-}}+ \frac{\underline{P}_{|_{\mathcal{E}_-}}}{\eps}+  \mathfrak{E}_{|_{\mathcal{E}_-}} \Big)(-r),
	\end{equation*}
	so that \eqref{bra-con-equi2} follows from the fact that $\llbracket \zeta_{|_{\mathcal{I}}} + \mathfrak{E}_{|_{\mathcal{I}}} \rrbracket =0$.
\end{proof} 
The conditional result for conservation of the total energy at order $O(\eps\mu)$, analogous to Proposition \ref{prop-intpres-con}, follows directly from \eqref{frakE-red} and \eqref{constraints}.
\begin{proposition} \label{prop-intpres-con2}
	Considering the shallow water Bernoulli principles at the contact points
	\begin{align*}
		&\zeta_{\rm w}+\frac{\underline{P}_{|_{\mathcal{I}}}}{\eps} + \eps\frac{q_i^2}{2h_{\rm w}^2}= \zeta_{|_{\mathcal{E}_-}}  
		+\mathfrak{E}_{|_{\mathcal{E}_-}}  
		&\quad \text{at} \quad x=-r, \\[5pt]
		&\zeta_{\rm w} +\frac{\underline{P}_{|_{\mathcal{I}}}}{\eps}+ \eps\frac{q_i^2}{2h_{\rm w}^2}= \eta+\frac{\underline{\mathcal{P}}}{\eps}
		+\widetilde{\mathfrak{E}} 
		&\quad \text{at} \quad x= r,
	\end{align*}with ${\mathfrak{E}}$ and $\widetilde{\mathfrak{E}}$ respectively as in \eqref{frak-E} and \eqref{frak-Etilde},
	regular solutions to the shallow water models in the presence of a water column \eqref{eq-open}-\eqref{eq-chamber} and \eqref{newton0-ref} with $\kappa=0$ conserve the total energy \eqref{total-energy2} at order $O(\eps\mu)$. 
\end{proposition}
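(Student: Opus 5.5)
The plan mirrors the proof of Proposition \ref{prop-intpres-con}, with the preceding proposition (the characterization \eqref{f-con2}--\eqref{bra-con2}) playing the role of Proposition \ref{prop-conserve-coup}. By that proposition, it suffices to show that the two Bernoulli principles in the statement imply \eqref{bra-con2}. Item \eqref{discharge-con2} of Assumption \ref{assumption2} is used there to pass from \eqref{f-con2} to \eqref{bra-con2}: all fluxes share the common factor $q_i$ at the contact points. I would therefore take \eqref{discharge-con2} as part of the setting, together with $\kappa=0$.

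The first step is to evaluate the left-hand side of \eqref{bra-con2} at $x=-r$. On $\mathcal{E}_-$ the pressure constraint in \eqref{constraints} gives $\underline{P}_{|_{\mathcal{E}_-}}=0$. On $\mathcal{I}$, \eqref{eq-int} gives $\zeta_{|_{\mathcal{I}}}=\zeta_{\rm w}$, and \eqref{frakE-red} gives $\mathfrak{E}_{|_{\mathcal{I}}}=\eps q_i^2/(2h_{\rm w}^2)$, since $\partial_x\partial_t q_i=0$. Hence
\begin{equation*}
\left\llbracket \zeta+\frac{\underline{P}}{\eps}+\mathfrak{E}\right\rrbracket^*_{-r}=\Big(\zeta_{|_{\mathcal{E}_-}}+\mathfrak{E}_{|_{\mathcal{E}_-}}\Big)(-r)-\zeta_{\rm w}(-r)-\frac{\underline{P}_{|_{\mathcal{I}}}(-r)}{\eps}-\eps\frac{q_i^2}{2h_{\rm w}^2(-r)}.
\end{equation*}
This vanishes by the first Bernoulli principle.

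The second step treats the right-hand side of \eqref{bra-con2} the same way, using again \eqref{eq-int} and \eqref{frakE-red} at $x=r$:
\begin{equation*}
\Big(\eta+\frac{\underline{\mathcal{P}}}{\eps}+\widetilde{\mathfrak{E}}\Big)(r)-\zeta_{\rm w}(r)-\frac{\underline{P}_{|_{\mathcal{I}}}(r)}{\eps}-\eps\frac{q_i^2}{2h_{\rm w}^2(r)}.
\end{equation*}
This vanishes by the second Bernoulli principle. Both sides of \eqref{bra-con2} are then zero, so \eqref{bra-con2} holds. Going back through the equivalence with \eqref{f-con2} and the preceding proposition, we obtain item \eqref{tot-ene-conservation2} of Assumption \ref{assumption2}, i.e.\ conservation of $\widetilde{E}_{\rm tot}$ at order $O(\eps\mu)$.

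There is no real obstacle; the only point needing care is bookkeeping at $x=r$. In the chamber, $\widetilde{\mathfrak{E}}$ should be evaluated with the chamber height $\mathfrak{h}$ (equal to $h_{\rm wc}$ there), consistently with the flux \eqref{frakflux}. One should check that \eqref{bra-con2} was obtained from \eqref{f-con2} by dividing out the same trace $q_i$ on both sides. Whether the dispersive term in $\widetilde{\mathfrak{E}}$ vanishes is irrelevant here: $\partial_x\partial_t q$ is simply $-\,d^2\overline\zeta/dt^2$ by \eqref{eq-chamber}, and it cancels through the Bernoulli condition anyway.
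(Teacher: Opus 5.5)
Your proposal is correct and follows the paper's route. The paper only remarks that the result follows from \eqref{frakE-red} and \eqref{constraints}, in the same way that Proposition \ref{prop-intpres-con} follows from Proposition \ref{prop-conserve-coup}: the two Bernoulli principles make both sides of \eqref{bra-con2} vanish, which is exactly what you check. For the direction you need, \eqref{bra-con2} $\Rightarrow$ \eqref{f-con2}, you only multiply by the common trace $q_i$, so no division is required.
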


\subsection{Wave-spring-mass interaction problems}
The coupling conditions introduced in the previous subsection allow us to write the last two terms on the right-hand side of \eqref{newton0-ref}, which couple the water column dynamics with the fluid dynamics in the chamber domain, in terms of the fluid unknowns in the open water domain. As a consequence, the dynamics of OWCs can be reformulated as an interaction problem between spring-mass systems inside the chamber and shallow water waves outside it.

\begin{proposition}\label{prop-wavespringmass}
	Under Assumption \ref{assumption2}, solving the fluid-water column systems \eqref{NSW-open-int}-\eqref{constraints} and \eqref{newton0-ref} is equivalent to solving the following wave-spring-mass systems: 
	\begin{equation}\label{shallow-open}
		\begin{cases}
			\partial_t \zeta + \partial_x q=0 \\[5pt]
			\Big(1-\widetilde{\mu}^2\partial_x^2\Big) \partial_t q + \eps \partial_x \Big(\dfrac{q^2}{h}\Big) + h \partial_x \zeta=  0\\[5pt]
		\end{cases} \quad\text{in} \quad\mathcal{E}_-, \quad \widetilde{\mu}\geq 0,
	\end{equation} where $h=1+\eps\zeta$, together with the far-field and boundary conditions
	\begin{equation}\label{coupling-con}(\zeta, q)\rightarrow (0,0) \quad \text{as}\quad x\rightarrow -\infty, \qquad  q=\dfrac{d\overline{\zeta}}{dt}|\mathcal{E}_+| \quad \text{at}\quad x=-r,\end{equation}
	coupled with the following nonlinear second-order integro-differential equations: for $\widetilde{\mu}=0$,
	\begin{equation}\label{added-springmass-0}
		\mathcal{T}^2_{\rm buo}(h_{\rm wc})
		\frac{d^2 \overline\zeta}{dt^2} = - (1+P_0) \overline\zeta + \kappa P_0 \mathcal{D}\overline\zeta +\eps \beta(h, h_{\rm wc}) \Big(\frac{d\overline \zeta }{dt} \Big)^2  +\zeta(-r),
	\end{equation} 
	with 
	\begin{equation}\label{increased-buoyancy-0}
		\begin{aligned}
			\mathcal{T}^2_{\rm buo}(h_{\rm wc})=		T^2_{\rm buo} +  \alpha|\mathcal{E}_+| + \frac{|\mathcal{E}_+|^2}{3 h_{\rm wc}} , \ \quad \beta(h, h_{\rm wc})= \frac{|\mathcal{E}_+|^2}{2}\Big( \frac{1}{h^2(-r)}+ \frac{1}{3h^2_{\rm wc}}\Big),	
		\end{aligned}
	\end{equation}
	and $h_{\rm wc}(\overline\zeta)=1+\eps\eta_{\rm eq} + \eps\overline\zeta$;
	for $\widetilde{\mu}\neq 0$,
	\begin{equation}\label{added-springmass}
		\mathcal{T}^2_{\widetilde{\mu},\rm buo}(h,h_{\rm wc})
		\frac{d^2 \overline\zeta}{dt^2} = - (1+P_0) \overline\zeta + \kappa P_0 \mathcal{D}\overline\zeta +\eps\widetilde{\beta}(h, h_{\rm wc}) \Big(\frac{d\overline \zeta }{dt} \Big)^2  +\mathcal{F}(-r),
	\end{equation} 
	with $\mathcal{F}$ as in \eqref{calF}, and 
	\begin{equation}\label{increased-buoyancy}
		\begin{aligned}
			\mathcal{T}^2_{\widetilde{\mu},\rm buo}(h,h_{\rm wc})\!=\!	\mathcal{T}^2_{\rm buo}(h_{\rm wc})\!+ \!\frac{\widetilde{\mu}|\mathcal{E}_+| }{h(-r)}\! +\frac{\widetilde{\mu}^2}{h_{\rm wc}}, \ \ \
			\widetilde{\beta}(h, h_{\rm wc})= \frac{|\mathcal{E}_+|^2}{2}\Big(\! \!-\!\frac{1}{h^2(-r)}\!+\! \frac{1}{3h^2_{\rm wc}}\Big).
		\end{aligned} 
	\end{equation}
	Moreover, the discharge in the interior domain is given by
	\begin{equation}\label{q-interior}
		q= \dfrac{d\overline{\zeta}}{dt}|\mathcal{E}_+|  \quad \text{in}\quad \mathcal{I},
	\end{equation} while the interface elevation and discharge in the chamber domain are given by
	\begin{equation}\label{eta-q-chamber}
		\eta = \eta_{\rm eq} +\overline{\zeta}, \qquad q=\dfrac{d\overline{\zeta}}{dt}(\ell-x) \quad \text{in}\quad \mathcal{E}_+.
	\end{equation}
	
\end{proposition}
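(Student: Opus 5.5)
The plan is to eliminate every unknown except $(\zeta,q)$ in $\mathcal{E}_-$ and $\overline\zeta$. I will use Proposition \ref{prop-model3}, the Bernoulli conditions of Proposition \ref{prop-intpres-con2} (which give item \eqref{tot-ene-conservation2} of Assumption \ref{assumption2}), and Newton's law \eqref{newton0-ref}.

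\textbf{Kinematics.} From \eqref{eq-chamber}, $q=\frac{d\overline\zeta}{dt}(\ell-x)$ in $\mathcal{E}_+$. Item \eqref{discharge-con2} at $x=r$ then gives $q_i=|\mathcal{E}_+|\frac{d\overline\zeta}{dt}$. This is \eqref{q-interior}, and item \eqref{discharge-con2} at $x=-r$ gives the boundary condition in \eqref{coupling-con}. Equation \eqref{shallow-open} is just \eqref{eq-open}, and \eqref{eta-q-chamber} is \eqref{eq-chamber}.

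\textbf{Pressures.} Integrating the interior momentum equation in \eqref{eq-int} over $\mathcal{I}$ gives $\alpha|\mathcal{E}_+|\frac{d^2\overline\zeta}{dt^2}=-\llbracket\underline{P}_{|_{\mathcal{I}}}\rrbracket/\eps$. Subtracting the two Bernoulli principles of Proposition \ref{prop-intpres-con2} eliminates $\zeta_{\rm w}$ and $q_i^2/h_{\rm w}^2$, and yields
\[
\frac{\underline{\mathcal{P}}(r)}{\eps}=\zeta(-r)+\mathfrak{E}(-r)-\eta-\widetilde{\mathfrak{E}}(r)-\alpha|\mathcal{E}_+|\frac{d^2\overline\zeta}{dt^2}.
\]
Next I integrate the chamber momentum equation in \eqref{eq-chamber} from $r$ to $x$. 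With $A=\frac{d^2\overline\zeta}{dt^2}-\frac{2\eps}{h_{\rm wc}}\big(\frac{d\overline\zeta}{dt}\big)^2$, this expresses $\underline{\mathcal{P}}(x)/\eps$ as $\underline{\mathcal{P}}(r)/\eps$ minus $\frac{A}{h_{\rm wc}}\int_r^x(\ell-s)\,ds$. Averaging over $\mathcal{E}_+$ and using $\int_r^\ell\int_r^x(\ell-s)\,ds\,dx=|\mathcal{E}_+|^3/3$ gives
\[
\frac{1}{\eps|\mathcal{E}_+|}\int_{\mathcal{E}_+}\underline{\mathcal{P}}=\frac{\underline{\mathcal{P}}(r)}{\eps}-\frac{|\mathcal{E}_+|^2}{3h_{\rm wc}}A.
\]

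\textbf{Substitution into Newton's law.} Inserting both formulas into \eqref{newton0-ref} makes $\eta$ cancel. For $\widetilde{\mathfrak{E}}(r)$ I use $\partial_x\partial_tq=-\frac{d^2\overline\zeta}{dt^2}$ in the chamber, so
\[
\widetilde{\mathfrak{E}}(r)=\eps\frac{|\mathcal{E}_+|^2}{2h_{\rm wc}^2}\Big(\frac{d\overline\zeta}{dt}\Big)^2+\frac{\widetilde\mu^2}{h_{\rm wc}}\frac{d^2\overline\zeta}{dt^2}.
\]
Collecting terms, the coefficient of $\frac{d^2\overline\zeta}{dt^2}$ is $T_{\rm buo}^2+\alpha|\mathcal{E}_+|+\frac{|\mathcal{E}_+|^2}{3h_{\rm wc}}+\frac{\widetilde\mu^2}{h_{\rm wc}}$. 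The chamber contributions to $\big(\frac{d\overline\zeta}{dt}\big)^2$ combine as $(-\frac12+\frac23)\frac{|\mathcal{E}_+|^2}{h_{\rm wc}^2}=\frac{|\mathcal{E}_+|^2}{6h_{\rm wc}^2}$, which is the $\frac{1}{3h_{\rm wc}^2}$ part of $\beta$ and $\widetilde\beta$.

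\textbf{The two cases.}
\begin{itemize}
\item For $\widetilde\mu=0$, one has $\mathfrak{E}(-r)=\eps\frac{q^2}{2h^2}(-r)=\eps\frac{|\mathcal{E}_+|^2}{2h^2(-r)}\big(\frac{d\overline\zeta}{dt}\big)^2$. Together with $P_0(I-\kappa\mathcal{D})\overline\zeta$ this gives \eqref{added-springmass-0}.
\item For $\widetilde\mu\neq0$, the term $\zeta+\mathfrak{E}$ at $-r$ contains the leading-order dispersive trace $-\widetilde\mu^2\partial_x\partial_tq/h$. I argue as in the proof of Proposition \ref{prop-evo-qi}, restricted to $\mathcal{E}_-$, with $R_0,R_1$ now posed on $\mathcal{E}_-$ with their conditions at $-\infty$ and $-r$. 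Since $\partial_tq(-r)=\frac{dq_i}{dt}$, we have $\partial_tq=-R_0\partial_x\varphi+\frac{dq_i}{dt}e^{(x+r)/\widetilde\mu}$, so the trace at $-r$ equals $\widetilde\mu^2\frac{\partial_xR_0\partial_x\varphi}{h}(-r)-\frac{\widetilde\mu|\mathcal{E}_+|}{h(-r)}\frac{d^2\overline\zeta}{dt^2}$. Moving the last term to the left produces the extra $\frac{\widetilde\mu|\mathcal{E}_+|}{h(-r)}$ in \eqref{increased-buoyancy}. Identity \eqref{jump-term} turns the rest into $\mathcal{F}(-r)-\eps\frac{q^2}{2h^2}(-r)$, which explains the sign change $-\frac{1}{h^2(-r)}$ in $\widetilde\beta$.
\end{itemize}

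\textbf{Converse.} For a solution of the wave-spring-mass system, I define $q,\eta$ by \eqref{q-interior}--\eqref{eta-q-chamber}. I recover $\underline{P}$ in $\mathcal{I}$ as the affine function fixed by the Bernoulli conditions, and $\underline{\mathcal{P}}$ by the chamber momentum equation with the trace $\underline{\mathcal{P}}(r)$ given by the right Bernoulli condition. Running the computation backwards shows that \eqref{newton0-ref} and \eqref{eq-int} hold.

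The main obstacle is the dispersive trace at $-r$: it requires redoing the $R_0/R_1$ decomposition on the half-line $\mathcal{E}_-$ alone, and the signs must be tracked carefully.
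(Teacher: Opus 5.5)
Your argument is correct and is essentially the paper's proof. The steps match one for one: the pressure jump from integrating \eqref{eq-int} over $\mathcal{I}$, the average of $\underline{\mathcal{P}}$ from the chamber momentum equation (your double integral is the integration by parts in \eqref{coupling-wc-eq}), the trace formula \eqref{frakEtilde-exp}, and the $R_0/R_1$ decomposition on $\mathcal{E}_-$ for the dispersive trace at $-r$.

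There are two minor points to tidy. First, Assumption \ref{assumption2} yields only the jump condition \eqref{bra-con-equi2}, not the two Bernoulli principles individually, which are stronger; since the difference you take is exactly \eqref{bra-con-equi2} for a flat-bottomed structure, you should cite that condition instead. Second, as worded your converse determines $\underline{P}_{|_{\mathcal{I}}}(r)$ and $\underline{\mathcal{P}}(r)$ circularly: fix $\underline{P}_{|_{\mathcal{I}}}$ first via its slope from \eqref{eq-int} and the left condition, after which the right condition gives $\underline{\mathcal{P}}(r)$ and the ODE gives \eqref{newton0-ref}.
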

\begin{proof}
	On the one hand, combining \eqref{eq-int}-\eqref{eq-chamber} with item \eqref{discharge-con2} of Assumption \ref{assumption2}, we obtain that
	\begin{equation}\label{q-cond}
		q_{|_{\mathcal{E}_-}}=q_i= \dfrac{d\overline \zeta}{dt}|\mathcal{E}_+| \quad \text{at} \quad x=-r,
	\end{equation} which implies \eqref{q-interior}, whereas \eqref{eta-q-chamber} is exactly the first line in \eqref{eq-chamber}. 
	In addition, integrating the momentum equation in \eqref{eq-int} over $\mathcal{I}$ and replacing $q_i$ by \eqref{q-cond} yields that 
	\begin{equation}\label{Pint-bra}
		\frac{\llbracket \underline{P}_{|_{\mathcal{I}}}\rrbracket}{\eps}= -\alpha|\mathcal{E}_+|\frac{d^2\overline \zeta}{dt^2}
	\end{equation}with $\alpha=2r/h_{\rm w}$ and, in view of \eqref{eq-chamber}, \eqref{frak-Etilde} reads 
	\begin{equation}\label{frakEtilde-exp}
		\widetilde{\mathfrak{E}}(r)=    \frac{\eps|\mathcal{E}_+|^2}{2h_{\rm wc}^2}\Big( \dfrac{d \overline \zeta}{dt}\Big)^2 +\frac{\widetilde{\mu}^2}{h_{\rm wc}}\dfrac{d^2 \overline \zeta}{dt^2}.
	\end{equation}
	On the other hand, we write the last two terms in \eqref{newton0-ref} in terms of $\overline\zeta$ and the fluid unknowns in the open water domain. By means of integration by parts and using the momentum equation in \eqref{eq-chamber}, we write
	\begin{equation}\label{coupling-wc-eq}
		\begin{aligned}
			\frac{1}{\eps |\mathcal{E}_+|}	\int_{\mathcal{E}_+}\underline{\mathcal{P}}&=  \frac{\underline{\mathcal{P}}(r)}{\eps} -  \frac{1}{\eps |\mathcal{E}_+|}\int_{\mathcal{E}_+} (x-\ell)\partial_x\underline{\mathcal{P}} 
			\\[5pt]&= \frac{\underline{\mathcal{P}}(r)}{\eps} - \frac{|\mathcal{E}_+|^2}{3 h_{\rm wc}} \dfrac{d^2\overline\zeta}{dt^2} +\frac{2\eps|\mathcal{E}_+|^2}{3h^2_{\rm wc}}\Big(\dfrac{d\overline\zeta}{dt} \Big)^2,
		\end{aligned}
	\end{equation}while combining \eqref{bra-con-equi2}, \eqref{constraints} and \eqref{Pint-bra}-\eqref{frakEtilde-exp} yields that 
	\begin{equation}\label{eta-P-trace}
		\Big(\eta + \frac{\underline{\mathcal{P}}}{\eps}\Big) (r) = \left(\zeta +\mathfrak{E}\right)(-r)  - \frac{\eps|\mathcal{E}_+|^2}{2h_{\rm wc}^2}\Big( \dfrac{d \overline \zeta}{dt}\Big)^2-\Big(\alpha |\mathcal{E}_+|  +\frac{\widetilde{\mu}^2}{h_{\rm wc}}\Big) \frac{d^2\overline\zeta}{dt^2}.
	\end{equation}
	For $\widetilde{\mu}=0$, since $\mathfrak{E}$ reads as in \eqref{frakE-red}, it follows from \eqref{q-cond} that 
	\begin{equation}\label{frakE-red-trace}
		\mathfrak{E}(-r) = \eps \frac{|\mathcal{E}_+|^2}{2h^2(-r)}\Big(\frac{d\overline\zeta}{dt} \Big)^2.
	\end{equation}Therefore, gathering \eqref{coupling-wc-eq}-\eqref{frakE-red-trace} together implies that
	\begin{equation*}
		\eta + \frac{1}{\eps |\mathcal{E}_+|}	\int_{\mathcal{E}_+}\underline{\mathcal{P}} = \zeta (-r) + \frac{\eps |\mathcal{E}_+|^2}{2}\Big( \frac{1}{h^2(-r)} + \frac{1}{3h^2_{\rm wc}} \Big) \Big( \frac{d\overline \zeta}{dt}\Big)^2 - \Big( \alpha|\mathcal{E}_+| + \frac{|\mathcal{E}_+|^2}{3 h_{\rm wc}} \Big)\frac{d^2\overline \zeta}{dt^2}
	\end{equation*}
	and injecting this expression into \eqref{newton0-ref} gives \eqref{added-springmass-0}.
	For $\widetilde{\mu}\neq 0$, we have that
	\begin{equation} \label{frakE-trace}
		\mathfrak{E}(-r)= \eps \frac{|\mathcal{E}_+|^2}{2h^2(-r)}\Big(\frac{d\overline\zeta}{dt} \Big)^2 -\widetilde{\mu}^2 \frac{\partial_x\partial_t q}{h}(-r).
	\end{equation}
	Resorting to the proof of Proposition \ref{prop-evo-qi} and using \eqref{q-cond}, we obtain that 
	\begin{equation*}
		\partial_t q = -R_0\partial_x \varphi + |\mathcal{E}_+|\frac{d^2\overline \zeta}{dt^2} e^{\frac{x+r}{\widetilde{\mu}}}\quad \text{in} \quad \mathcal{E}_-
	\end{equation*}with $\varphi$ as in \eqref{momentum-flux} and, in view of \eqref{R0-R1}, we write
	\begin{equation}\label{disp-openwater}
		-\widetilde{\mu}^2 \frac{\partial_x\partial_t q}{h}= \dfrac{(R_1-I)\varphi}{h} - \widetilde{\mu} |\mathcal{E}_+| \frac{d^2 \overline \zeta}{dt^2}\frac{e^{\frac{x+r}{\widetilde{\mu}}}}{h} \quad \text{in} \quad \mathcal{E}_-.
	\end{equation}After using the explicit expression of $\varphi$ and again \eqref{q-cond}, we combine \eqref{frakE-trace} with \eqref{disp-openwater} to obtain that
	\begin{equation}\label{zeta-frakE-trace}
		\left(\zeta + \mathfrak{E} \right)(-r)= \mathcal{F}(-r)-\frac{\eps |\mathcal{E}_+|^2}{2h^2(-r)}\Big(\frac{d\overline\zeta}{dt} \Big)^2 -\frac{\widetilde{\mu}|\mathcal{E}_+| }{h(-r)}\frac{d^2 \overline \zeta}{dt^2},
	\end{equation}with $\mathcal{F}$ as in \eqref{calF}. Therefore, gathering \eqref{coupling-wc-eq}-\eqref{eta-P-trace} and \eqref{zeta-frakE-trace} together yields that
	\begin{align*}
		\eta + \frac{1}{\eps |\mathcal{E}_+|}	\int_{\mathcal{E}_+}\underline{\mathcal{P}} =\  &\mathcal{F} (-r) + \frac{\eps |\mathcal{E}_+|^2}{2}\Big( -\frac{1}{h^2(-r)} + \frac{1}{3h^2_{\rm wc}} \Big) \Big( \frac{d\overline \zeta}{dt}\Big)^2 \\[2pt]&- \Big( \alpha|\mathcal{E}_+| + \frac{|\mathcal{E}_+|^2}{3 h_{\rm wc}} + \frac{\widetilde{\mu}|\mathcal{E}_+| }{h(-r)} +\frac{\widetilde{\mu}^2}{h_{\rm wc}} \Big)\frac{d^2\overline \zeta}{dt^2}
	\end{align*}
	and substituting this expression into \eqref{newton0-ref} finally gives \eqref{added-springmass}.
\end{proof}

\paragraph{\textit{Effective buoyancy period: added mass vs spring force}}The integro-differential equations \eqref{added-springmass-0} and \eqref{added-springmass} exhibit the so-called {added mass} phenomenon, which is typical of fluid-structure interaction problems. 
Indeed, the additional leading-order terms appearing on the left-hand side through the positive coefficients in $\mathcal{T}^2_{\rm buo}(h_{\rm wc})$ and $\mathcal{T}^2_{\widetilde{\mu},\rm buo}(h,h_{\rm wc})$ that increase $T^2_{\rm buo}$, describe the fact that, in order to move, the water column must accelerate not only itself but also the surrounding fluid, thereby increasing its effective mass. Moreover, as for \eqref{added-evo-qi}, retaining the new leading-order terms on the left-hand side of  \eqref{added-springmass-0} and \eqref{added-springmass} avoids numerical instabilities in simulations.
When $\widetilde{\mu}=0$, combining \eqref{buo-period} with \eqref{increased-buoyancy-0}, yields the dimensionless added mass
\begin{align}\label{dimless-addedmass-0}
	m_{\rm add} 
	=\frac{L^2}{h_0^2}\Big( 
	\alpha|\mathcal{E}_+| + \frac{|\mathcal{E}_+|^2}{3 h_{\rm wc}} \Big).
\end{align}
When $\widetilde{\mu}\neq 0$, dispersive effects further contribute to the added mass phenomenon and introduce a new coupling mechanism. On the one hand,
$\mathcal{T}^2_{\widetilde{\mu},\rm buo}(h,h_{\rm wc})$ contains two additional positive terms compared with $\mathcal{T}^2_{\rm buo}(h_{\rm wc})$. On the other hand, the additional term involving the trace of the fluid height at the left wall of the structure reveals an interesting feature: in the dispersive case, the added mass effect is directly coupled with the fluid dynamics in the open water domain. In view of \eqref{increased-buoyancy}, the dimensionless added mass then reads 
\begin{align}\label{dimless-addedmass}
	m_{\rm add} 
	=\frac{L^2}{h_0^2}\Big( 
	\alpha|\mathcal{E}_+| + \frac{|\mathcal{E}_+|^2}{3 h_{\rm wc}} \Big) +  \frac{L|\mathcal{E}_+|}{h_0\sqrt{3}h(-r)} +\frac{1}{3h_{\rm wc}}.
\end{align}
A second difference between the hyperbolic and dispersive cases concerns the nature of the last term on the right-hand side of  \eqref{added-springmass-0} and \eqref{added-springmass}, which provides the driving force for the water column motion. In \eqref{added-springmass-0}, this term is local, as it is given by the trace of the fluid surface elevation at the left structure wall. In \eqref{added-springmass}, instead, it is non-local since computing the trace of $\mathcal{F}$, which contains the operator $R_1$, requires the knowledge of the fluid unknowns over the entire open water domain, see \eqref{R1}.

Another important feature revealed by \eqref{added-springmass-0} and \eqref{added-springmass} is related to the presence of the spring force $-P_0 \overline\zeta$, which accounts for the restoring action of the air pressure variation in the chamber. In fact, the effective buoyancy period of the water column motion is determined by the competition between the added mass phenomenon and the stiffness of the spring force. It is indeed given by $2\pi\mathfrak T_{\rm buo}$, where
\begin{equation}\label{final-buoyancy}
	\mathfrak{T}_{\rm buo}^2= \frac{\mathcal{T}^2_{\rm buo}(h_{\rm wc})}{1+P_0}  \quad \text{for} \ \widetilde{\mu}=0\qquad \text{or} \qquad \mathfrak{T}_{\rm buo}^2= \frac{\mathcal{T}^2_{\widetilde{\mu},\rm buo}(h,h_{\rm wc})}{1+P_0}  \quad \text{for} \ \widetilde{\mu}\neq0.
\end{equation} 
In the absence of air pressure variation, as in the case of floating buoys \cite{BecLan22}, one has $P_0\equiv 0$ and the hydrostatic buoyancy period $2\pi T_{\rm buo}$ is increased by the interaction with the fluid, with a larger increase when dispersive effects are taken into account. For OWCs, however, the situation may be different as $P_0$ does not vanish. In particular, combining \eqref{buo-period} with \eqref{increased-buoyancy-0} and \eqref{increased-buoyancy}, we find three different scenarios: 

\begin{proposition}
	Let $P_0$ be as in \eqref{pressure-mean}, $m_{\rm wc}$ be the dimensionless water column mass and $m_{\rm add}$ be the dimensionless added mass \eqref{dimless-addedmass-0} when $\widetilde{\mu}=0$ or \eqref{dimless-addedmass} when $\widetilde{\mu}\neq0$. The following relations between the effective and hydrostatic buoyancy periods, defined respectively by \eqref{final-buoyancy} and \eqref{buo-period}, hold:
	\begin{enumerate}
		\item if $P_0 m_{\rm wc} < m_{\rm add}$ , then  $\mathfrak{T}_{\rm buo} > T_{\rm buo}$;\smallskip
		\item if $P_0 m_{\rm wc} = m_{\rm add}$ , then  $\mathfrak{T}_{\rm buo} = T_{\rm buo}$;\smallskip
		\item if $P_0 m_{\rm wc} >m_{\rm add}$ , then  $\mathfrak{T}_{\rm buo}< T_{\rm buo}$.\smallskip
	\end{enumerate}
	In particular, the effective buoyancy period is larger than the hydrostatic one for sufficiently large chamber heights and smaller for sufficiently small ones.
\end{proposition}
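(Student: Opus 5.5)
The plan is a direct algebraic comparison of $\mathfrak{T}_{\rm buo}^2$ with $T_{\rm buo}^2$. First I express the modified period in terms of the added mass. By \eqref{buo-period}, $T^2_{\rm buo}=m_{\rm wc}h_0^2/L^2$. Comparing \eqref{increased-buoyancy-0} with \eqref{dimless-addedmass-0}, and \eqref{increased-buoyancy} with \eqref{dimless-addedmass}, the added terms are exactly $\frac{h_0^2}{L^2}m_{\rm add}$ in both cases. For the dispersive terms I need $\widetilde\mu=\sqrt{\mu/3}=\frac{h_0}{\sqrt3 L}$, so that
\[
\frac{\widetilde\mu|\mathcal{E}_+|}{h(-r)}=\frac{h_0^2}{L^2}\cdot\frac{L|\mathcal{E}_+|}{h_0\sqrt3\,h(-r)}
\qquad\text{and}\qquad
\frac{\widetilde\mu^2}{h_{\rm wc}}=\frac{h_0^2}{L^2}\cdot\frac{1}{3h_{\rm wc}}.
\]
Hence, in both cases,
\[
\mathcal{T}^2=\frac{h_0^2}{L^2}\left(m_{\rm wc}+m_{\rm add}\right),
\]
where $\mathcal{T}^2$ denotes $\mathcal{T}^2_{\rm buo}(h_{\rm wc})$ or $\mathcal{T}^2_{\widetilde\mu,\rm buo}(h,h_{\rm wc})$.

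Next, by \eqref{final-buoyancy}, $\mathfrak{T}^2_{\rm buo}/T^2_{\rm buo}=(m_{\rm wc}+m_{\rm add})/\big((1+P_0)m_{\rm wc}\big)$. Since $m_{\rm wc}>0$ and $1+P_0>0$, this ratio exceeds, equals, or falls below $1$ exactly when $m_{\rm add}$ exceeds, equals, or falls below $P_0m_{\rm wc}$. Because both periods are positive, taking square roots gives items (1)--(3).

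For the final statement, I use $P_0=\gamma P_{\rm atm}/(\rho g h_{\rm ch})$, which tends to $0$ as $h_{\rm ch}\to\infty$ and to $+\infty$ as $h_{\rm ch}\to0$. I then need $m_{\rm wc}$ and $m_{\rm add}$ to stay fixed as $h_{\rm ch}$ varies. This holds because $m_{\rm add}$ depends only on $r,\ell,h_{\rm w},h_{\rm wc}$ and $h(-r)$ (the latter in the dispersive case), and these quantities do not involve $h_{\rm ch}$. Moreover, $m_{\rm add}>0$ since $\alpha>0$, $h_{\rm wc}>0$ and $h>0$. So large chamber heights put us in case (1), and small ones in case (3).

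The only delicate step is the bookkeeping matching the dispersive coefficients with the dimensionless added mass through $\widetilde\mu=h_0/(\sqrt3L)$. A second point is noting that, in the dispersive case, $m_{\rm add}$ depends on $h(-r)$ and $h_{\rm wc}$, so it is time-dependent. The comparison should therefore be understood pointwise in time, and the limiting argument in $h_{\rm ch}$ requires a bound on $h(-r)$ and $h_{\rm wc}$ that is uniform in time, e.g. $0<c\le h(-r),h_{\rm wc}\le C$, so that $m_{\rm add}$ stays in a fixed compact subset of $(0,\infty)$.
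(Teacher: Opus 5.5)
Your proof is correct and is the same computation the paper has in mind. The paper gives no separate proof and only says the three scenarios follow by combining the definition of $T_{\rm buo}$ with the expressions for $\mathcal{T}^2_{\rm buo}(h_{\rm wc})$ and $\mathcal{T}^2_{\widetilde{\mu},\rm buo}(h,h_{\rm wc})$; this is exactly your route of writing the modified period as $\frac{h_0^2}{L^2}(m_{\rm wc}+m_{\rm add})$ via $\widetilde{\mu}=h_0/(\sqrt{3}L)$, comparing its quotient by $1+P_0$ with $T^2_{\rm buo}$, and using $P_0\propto 1/h_{\rm ch}$ for the final claim. One small correction to your closing remark: $m_{\rm add}$ is time-dependent in the hyperbolic case too, through $h_{\rm wc}(\overline\zeta)$, so the pointwise-in-time reading and the uniform bounds on $h_{\rm wc}$ are needed in both cases, not only the dispersive one.
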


\subsection{Well-posedness of the initial boundary value problems} 
After having reformulated the shallow water models in the presence of a water column as wave-spring-mass systems \eqref{shallow-open}-\eqref{added-springmass}, we complement them with the initial conditions 
\begin{equation}\label{initial-conds}
	(\zeta, q)(0, \cdot )= (\zeta^{\rm in}, q^{\rm in}) \quad \text{in} \quad \mathcal{E}_-, \qquad \Big(\overline\zeta, \frac{d\overline\zeta}{dt}\Big)(0) = (\overline\zeta^{\rm in},\overline\zeta_1^{\rm in} ).
\end{equation}

\paragraph{{\emph{Hyperbolic case}}.} In the case $\widetilde{\mu}=0$, the wave-spring-mass system \eqref{shallow-open}-\eqref{added-springmass-0} is essentially equivalent to the wave-piston system studied in \cite[Section 4]{IguLan21}, where the authors investigated the motion of shallow water waves pushed by a lateral piston. In fact, the analysis here is simpler since the nonlinear shallow water equations are set in the time-independent domain $\mathcal{E}_-$, while in their setting the fluid domain has a moving boundary, determined by the horizontal position of the piston, and a suitable Lagrangian diffeomorphism is introduced. 

Seeking regular solutions requires compatibility conditions. To this end, we denote $(\zeta_k, q_k)=(\partial_t^k \zeta, \partial_t^k q) $ for $k\geq0$ and exploit the quasilinear structure of  \eqref{shallow-open} to write inductively $(\zeta_k, q_k)$ as polynomials of spatial derivatives of $(\zeta, q)$ up to order $k$. Therefore, after denoting $(\zeta_k^{\rm in}, q_k^{\rm in})= (\zeta_k, q_k) (0, \cdot)$ and recalling \eqref{initial-conds}, it follows that 
\begin{equation}\label{q-k-in}
	\zeta_k^{\rm in}= P_k(\zeta^{\rm in}, q^{\rm in}, \dots, \partial_x^k \zeta^{\rm in},  \partial_x^kq^{\rm in}), \quad
	q_k^{\rm in}= Q_k(\zeta^{\rm in}, q^{\rm in}, \dots, \partial_x^k \zeta^{\rm in},  \partial_x^kq^{\rm in}), \quad k\geq 0,
\end{equation} 
where $P_k, Q_k$ are polynomials of their arguments with each monomial containing at most $k$ derivatives of $(\zeta^{\rm in},q^{\rm in})$. Similarly, denoting $\overline\zeta^{\rm in}_k=\frac{d^k\overline \zeta}{dt^k}(0)$, we use the integro-differential equation \eqref{added-springmass-0} to write 
\begin{equation}\label{overzeta-k-in}
	\overline\zeta_{k+2}^{\rm in}= P_k^2(\overline\zeta^{\rm in}, \overline\zeta_1^{\rm in}, \zeta^{\rm in}(-r),\dots, \zeta_k^{\rm in}(-r)), \quad k\geq 0,
\end{equation}
where $P^2_k$ is a nonlinear function of its arguments.

\begin{definition}\label{def-compatibility}
	Let $m \geq 1$ be an integer. We say that the initial data $(\zeta^{\rm in}, q^{\rm in})\in H^m(\mathcal{E_-})$ and $(\overline\zeta^{\rm in}, \overline\zeta_1^{\rm in})\in \mathbb{R}^2$ for the initial boundary value problem \eqref{shallow-open}-\eqref{added-springmass-0} and \eqref{initial-conds} satisfy the compatibility conditions up to order $m-1$ if
	\begin{equation}
		q_k^{\rm in} = \overline{\zeta}_{k+1}^{\rm in}|\mathcal{E}_+| \quad \text{at}\quad x=-r, \qquad \text{for} \quad  k=0,\dots, m-1,
	\end{equation}
	with $q_k^{\rm in}$ and $\overline\zeta^{\rm in}_{k+1}$ as in \eqref{q-k-in} and \eqref{overzeta-k-in}.
\end{definition}

\begin{theorem}\label{theo-wave-spring-0}
	Let $m\geq 2$ be an integer. Consider $(\zeta^{\rm in}, q^{\rm in})\in H^m (\mathcal{E}_-)$ and $(\overline\zeta^{\rm in},\overline\zeta_1^{\rm in})\in \mathbb{R}^2$ satisfying 
	\begin{equation}\label{subsonic-ini}
		\inf_{\mathcal{E}_-}\Big(\sqrt{1+\eps \zeta^{\rm in}} - \eps\Big|\frac{q^{\rm in}}{1+\eps \zeta^{\rm in}}\Big| \Big) >0 \quad \text{and} \quad h^{\rm in}_{\rm wc}=1+ \eps\eta_{\rm eq} + \eps\overline\zeta^{\rm in} > 0
	\end{equation}
	and the compatibility conditions up to order $m-1$ in the sense of Definition \ref{def-compatibility}. Then, for any $\eps\in (0,1]$ and $\widetilde{\mu}=0$, there exist $T>0$ and a unique solution $(\zeta, q, \overline\zeta)$ to \eqref{shallow-open}-\eqref{added-springmass-0}, with $(\zeta,q)\in\mathbb{W}_T^m(\mathcal{E}_-)$ and $\overline\zeta\in H^{m+2}(0,T)$, satisfying the initial conditions \eqref{initial-conds}.
\end{theorem}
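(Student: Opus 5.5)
The plan is to recast \eqref{shallow-open}-\eqref{added-springmass-0} as a quasilinear hyperbolic initial boundary value problem on the half-line $\mathcal{E}_-$. It has a single boundary condition at $x=-r$, and the boundary datum is produced by a semilinear ODE. This is the structure of the wave-piston system of \cite[Section 4]{IguLan21}, and of \eqref{4x4quasihyp}-\eqref{semilinearODE}, so I would follow that route.

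\textbf{Reformulation.} First set $G=(\overline\zeta,\frac{d\overline\zeta}{dt},\mathcal{D}\overline\zeta)$. Since $\frac{d}{dt}\mathcal{D}\overline\zeta=\overline\zeta-\kappa\mathcal{D}\overline\zeta$, the integro-differential equation \eqref{added-springmass-0} becomes a first-order system $\frac{dG}{dt}=\Theta(G,\zeta(-r))$. Here $\Theta$ is smooth as long as $h_{\rm wc}>0$ and $h(-r)>0$, because $\mathcal{T}^2_{\rm buo}(h_{\rm wc})\geq T^2_{\rm buo}>0$.

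Next, write \eqref{shallow-open} in the variables $u=(\zeta,q)$ as $\partial_t u+A(u)\partial_x u=0$. Under the subsonic condition \eqref{subsonic-ini}, the eigenvalues of $A(u)$ are $\lambda_\pm=\eps q/h\pm\sqrt h$, with one of each sign. Hence exactly one characteristic enters $\mathcal{E}_-$ through $x=-r$. The boundary condition $q(-r)=|\mathcal{E}_+|\,G_2(t)$ from \eqref{coupling-con} is therefore a scalar condition on the right boundary. It satisfies the uniform Kreiss--Lopatinskii condition, since the $q$-component of the incoming eigenvector is nonzero precisely when $\lambda_-\neq0$. Positivity of $h$ at time zero comes from \eqref{subsonic-ini}, and both this and the subsonic condition will be propagated on a short time interval by continuity.

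\textbf{Linear estimates.} Freeze the coefficients and consider the linearized problem with given boundary datum $g(t)$. Using the symmetrizer $\mathrm{diag}(1,\cdot)$ adapted to $A(u)$, or Kreiss's construction as in \cite{BocHeVer23}, I would prove $\mathbb{W}^m_T$ estimates that also control the trace: $\sum_j\|\partial_t^j u(-r)\|_{L^2(0,T)}$ is bounded by the data and by $\|g\|_{H^m(0,T)}$. The condition at infinity is handled by weighted or standard $H^m$ energy estimates on the half-line; compatibility up to order $m-1$ (Definition \ref{def-compatibility}) ensures the solution is $C^j([0,T];H^{m-j})$.

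\textbf{Fixed point and main obstacle.} Define the map $(u,G)\mapsto(\tilde u,\tilde G)$ by two steps. First, solve the ODE for $\tilde G$ with forcing given by the trace $\zeta(-r)\in H^m(0,T)$; this puts $\tilde G_1\in H^{m+2}$ and $\tilde G_2\in H^{m+1}$. Second, solve the linearized hyperbolic problem with $g=|\mathcal{E}_+|\tilde G_2$.

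The key point, and the main obstacle, is closing the loop through the boundary. The ODE needs the trace of $\zeta$ in $H^m(0,T)$, which interior regularity alone does not provide. This is exactly where the hidden trace regularity from the maximally dissipative boundary estimate is used. On the return side, the boundary datum $\tilde G_2$ gains one derivative from the ODE, so no loss of derivatives occurs. I would show boundedness of the map in a high norm for small $T$, then contraction in a low norm ($m=0$ or $1$ level, using the trace estimate again). Finally, a standard weak-strong argument yields a fixed point and uniqueness, giving $(\zeta,q)\in\mathbb{W}^m_T(\mathcal{E}_-)$ and $\overline\zeta\in H^{m+2}(0,T)$.

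A secondary difficulty is producing compatible data for the iterates. I would handle it as in \cite{IguLan21}, by keeping the time derivatives at $t=0$ fixed through \eqref{q-k-in}-\eqref{overzeta-k-in}.
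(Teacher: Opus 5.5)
Your proposal is correct and follows essentially the paper's route. You recast the problem as a $2\times2$ quasilinear hyperbolic system on $\mathcal{E}_-$ with the semilinear boundary condition $q(-r)=|\mathcal{E}_+|G_2$ driven by a first-order ODE; taking $\mathcal{D}\overline\zeta$ rather than $P_{\rm ch}$ as the third ODE unknown is equivalent. You then check subsonicity and the uniform Kreiss--Lopatinskii condition and rely on \cite[Section 4]{IguLan21}: the paper simply invokes \cite[Theorem 4.4]{IguLan21}, whereas you sketch its proof (hidden trace regularity, then an iteration with the initial time derivatives held fixed).

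One small caveat on the symmetrizer. The trace control must come from the Kreiss-type symmetrizer, diagonal in the characteristic variables with the outgoing mode weighted more heavily. The plain diagonal Friedrichs symmetrizer only gives a boundary form $2a\zeta q+bq^2$, which does not control $\zeta(-r)$ when only $q(-r)$ is prescribed.
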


\begin{proof}
	After introducing the unknowns $u=(\zeta, q)$, $G=(\overline\zeta, \frac{d\overline\zeta}{dt}, P_{\rm ch})$ and the initial data $u^{\rm in}=(\zeta^{\rm in}, q^{\rm in})$, $G^{\rm in}=(\overline\zeta^{\rm in}, \overline\zeta^{\rm in}_1, P^{\rm in}_{\rm ch})$ with $P^{\rm in}_{\rm ch}= \epsilon \overline\zeta^{\rm in}$, the initial boundary value problem related to the wave-spring-mass system \eqref{shallow-open}-\eqref{added-springmass-0} can be recast as the $2\times2$ quasilinear hyperbolic system
	\begin{equation}\label{quasi-system}
		\begin{cases}
			\partial_t u + A(u)\partial_x u =0\quad &\text{in} \quad \mathcal{E}_-\\
			e_2\cdot u  = V(G)\quad &\text{at}\quad x=-r\\
			u=u^{\rm in} \quad &\text{at}\quad t=0
		\end{cases}
	\end{equation}
	with 
	$$A(u)=\left( \begin{matrix}
		0 & 1\\[2pt]
		h -\dfrac{\eps^2q^2}{h^2}& \dfrac{2\eps q}{h}
	\end{matrix}\right) \qquad \text{and} \qquad V(G)= |\mathcal{E}_+| G_2,$$
	coupled with the ODE
	\begin{equation}\label{semilinear-ODE}\begin{cases}
			\dfrac{dG}{dt}= M (G, e_1\cdot u (-r)),\\[5pt]
			G(0)=G^{\rm in},
		\end{cases}
	\end{equation}
	where $M: \mathbb{R}^3\times \mathbb{R} \rightarrow \mathbb{R}^3$ has components 
	\begin{align*}
		&M_1(G, \psi)= G_2, \ M_2(G, \psi) = \frac{1}{\mathcal{T}_{\rm buo}^2(h_{\rm wc}(G_1))} \Big(\!- G_1 - \tfrac{P_0}{\epsilon}G_3 +\eps \beta\left(h(\psi), h_{\rm wc}(G_1)\right)G_2^2  + \psi \Big),\\ 
		& M_3(G, \psi)=-\kappa G_3 + \epsilon G_2.
	\end{align*}
	The boundary condition in \eqref{quasi-system} is semilinear in the sense that $M$ in \eqref{semilinear-ODE} is nonlinear only with respect to $u$ and not its derivatives. Moreover, it follows from the definition of $\mathcal{T}^2_{\rm buo}(h_{\rm wc}(G_1))$ that $M_2(G, \psi)$ is well-defined whenever $h_{\rm wc}(G_1)>0$. This coupled PDE-ODE problem falls within the framework of \cite[Section 4]{IguLan21}.
The eigenvalues of the matrix $A(u)$ and the associated unit eigenfunctions are respectively given by
	$$\lambda_\pm(u)= \frac{\eps q}{h} \pm \sqrt{h} \qquad \text{and}\qquad  e_\pm (u)= \frac{(1, \lambda_\pm(u))}{\sqrt{1+ \lambda^2_\pm(u)}}.$$
The condition ensuring that $\lambda_+(u)>0$ and $\lambda_-(u)<0$, as well as the uniform Kreiss-Lopatinskii condition $|e_2\cdot e_+(u)|>0$, is 
	$$\sqrt{h(t,x)} -\eps \left|\frac{q}{h}(t,x)\right|>0 \qquad \forall (t,x)\in (0,T)\times \mathcal{E}_-,$$ 
	which is referred to as the subsonic regime in the context of nonlinear shallow water equations.
	Therefore, under the assumption \eqref{subsonic-ini} and the compatibility conditions introduced in Definition \ref{def-compatibility}, we argue as in the proof of \cite[Theorem 4.4]{IguLan21} and obtain the existence of $T>0$ and a unique solution $(u, G)$ to \eqref{shallow-open}-\eqref{added-springmass-0} with $u\in W^m_T(\mathcal{E}_-)$ and $G\in H^{m+1}(0,T)$. Consequently, $\overline\zeta \in H^{m+2}(0,T)$ and, using the third component of the ODE in \eqref{semilinear-ODE}, that $P_{\rm ch}\in H^{m+2}(0,T)$.	
\end{proof}

\emph{Dispersive case.} In the case $\widetilde{\mu}\neq 0$, the wave-spring-mass system \eqref{shallow-open}-\eqref{coupling-con} and \eqref{added-springmass} can be reformulated as an ODE and local well-posedness follows from Cauchy-Lipschitz theory, analogously to Theorem \ref{theo-trans-wp-disp}. As in that case, the existence time obtained here is not uniform with respect to $\widetilde{\mu}$, see Remark \ref{rem-uniform}.

\begin{theorem}\label{theo-wave-spring}
	Let $m\geq 1$ be an integer. Consider $(\zeta^{\rm in}, q^{\rm in})\in H^m(\mathcal{E}_-)\times H^{m+1}(\mathcal{E}_-) $, $(\overline\zeta^{\rm in }, \overline\zeta_1^{\rm in })\in \mathbb{R}^2$ such that 
	\begin{equation}\label{initial-ass-wc}
		\inf_{ \mathcal{E}_-}\left(1+\eps \zeta^{\rm in}\right) >0,  \quad h^{\rm in}_{\rm wc}=1+ \eps\eta_{\rm eq} + \eps\overline\zeta^{\rm in} > 0 \quad\text{and}\quad q^{\rm in}= |\mathcal{E}_+|\overline\zeta_1^{\rm in} \ \text{ at }\ x=-r.  
	\end{equation}
	Then, for any $\eps\in (0,1]$ and $\widetilde{\mu}>0$, there exists $T>0$ such that the system \eqref{shallow-open}-\eqref{coupling-con} and \eqref{added-springmass} admits a unique solution $(\zeta,q, \overline\zeta)\in C^1\left([0,T);H^m(\mathcal{E}_-)\times H^{m+1}(\mathcal{E}_-)\right)\times C^2([0,T); \mathbb{R})$ satisfying the initial conditions \eqref{initial-conds}.
	In addition, denoting by $T_{\rm max}$ the maximal existence time, if $T_{\rm max}$ is finite then 
	\begin{equation*}
		\limsup_{t\rightarrow T_{\rm max}^-}\Big( \Big\| \zeta(t), q(t), \frac{1}{h(t)}\Big\|_{L^\infty(\mathcal{E}_-)}  + \Big|\frac{d\overline\zeta}{dt}(t)\Big| + \Big| \frac{1}{h_{\rm wc}(\overline\zeta(t))} \Big|  \Big) = +\infty.
	\end{equation*} 
\end{theorem}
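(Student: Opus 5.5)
The approach mirrors the proof of Theorem \ref{theo-trans-wp-disp}: reduce the wave-spring-mass system to an infinite-dimensional ODE and apply Cauchy--Lipschitz.

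\emph{Step 1: reformulation.} First introduce $P_{\rm ch}$ as an auxiliary unknown through \eqref{evo-Pch-dimless}, so that $\kappa P_0\mathcal{D}\overline\zeta$ in \eqref{added-springmass} is replaced by the local term $P_0\overline\zeta-\frac{P_0}{\epsilon}P_{\rm ch}$. Set $U=(\zeta,q,\overline\zeta,\overline\zeta_1,P_{\rm ch})$ with $\overline\zeta_1=\frac{d\overline\zeta}{dt}$. Since the boundary condition \eqref{coupling-con} gives $\partial_t q(-r)=|\mathcal{E}_+|\frac{d^2\overline\zeta}{dt^2}$, the proof of Proposition \ref{prop-wavespringmass} yields
\[
\partial_t q=-R_0\partial_x\varphi+|\mathcal{E}_+|\frac{d^2\overline\zeta}{dt^2}\,e^{\frac{x+r}{\widetilde\mu}}\quad\text{in }\mathcal{E}_-,
\]
where $R_0$ now denotes the Dirichlet inverse of $I-\widetilde\mu^2\partial_x^2$ on $\mathcal{E}_-$ and $R_1$ the corresponding Neumann inverse used in $\mathcal{F}$. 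Because $\mathcal{T}^2_{\widetilde\mu,\rm buo}(h,h_{\rm wc})\geq T^2_{\rm buo}>0$ whenever $h(-r)>0$ and $h_{\rm wc}>0$, I will divide \eqref{added-springmass} by it. This gives $\frac{d\overline\zeta_1}{dt}=\Psi(U)$, with $\Psi$ depending on $\overline\zeta$, $\overline\zeta_1$, $P_{\rm ch}$, the trace $h(-r)$ and $\mathcal{F}(-r)$. Substituting $\Psi$ into the equation for $\partial_t q$ produces a closed system $\frac{dU}{dt}=\Phi(U)$ with
\[
\Phi=\Bigl(-\partial_xq,\ -R_0\partial_x\varphi+|\mathcal{E}_+|\Psi e^{\frac{x+r}{\widetilde\mu}},\ \overline\zeta_1,\ \Psi,\ -\kappa P_{\rm ch}+\epsilon\overline\zeta_1\Bigr).
\]

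\emph{Step 2: smoothness of $\Phi$ (the main point).} Let $\mathcal{M}$ be the open subset of $H^m(\mathcal{E}_-)\times H^{m+1}(\mathcal{E}_-)\times\mathbb{R}^3$ on which $\inf h>0$ and $h_{\rm wc}(\overline\zeta)>0$. I must show that $\Phi:\mathcal{M}\to H^m\times H^{m+1}\times\mathbb{R}^3$ is smooth, hence locally Lipschitz, for $m\geq1$. The component $-\partial_xq$ lies in $H^m$ since $q\in H^{m+1}$. For the other components, $\varphi(\zeta,q)\in H^m$ because $H^m$ is an algebra for $m\geq1$ and $1/h$ is a smooth function of $\zeta$ on $\mathcal{M}$. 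Then $R_0\partial_x\varphi\in H^{m+1}$ by elliptic regularity on the half-line, and $R_1\varphi\in H^{m+2}$, so the trace $\mathcal{F}(-r)$ is a smooth real function on $\mathcal{M}$ by the Sobolev trace theorem. The exponential profile $e^{(x+r)/\widetilde\mu}$ belongs to $H^{m+1}(\mathcal{E}_-)$ because $\widetilde\mu>0$. The coefficients $\mathcal{T}^2_{\widetilde\mu,\rm buo}$ and $\widetilde\beta$ are smooth away from $h(-r)=0$ and $h_{\rm wc}=0$. The most delicate point is to check that all trace dependences pass through $H^1\hookrightarrow C^0$ without losing regularity; the estimates above show they do.

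\emph{Step 3: conclusion.} Cauchy--Lipschitz gives a unique $U\in C^1([0,T);\mathcal{M})$. The boundary condition $q(-r)=|\mathcal{E}_+|\overline\zeta_1$ holds initially by \eqref{initial-ass-wc}. It is then propagated, because the equation for $q$ evaluated at $x=-r$ gives $\partial_tq(-r)=|\mathcal{E}_+|\Psi=|\mathcal{E}_+|\frac{d\overline\zeta_1}{dt}$, using $R_0\partial_x\varphi(-r)=0$. Since $\frac{d\overline\zeta}{dt}=\overline\zeta_1\in C^1$, we get $\overline\zeta\in C^2$, and eliminating $P_{\rm ch}$ via \eqref{Pch-dimless} returns \eqref{added-springmass}. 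For the blow-up criterion, I will use tame Moser estimates to bound $\|\Phi(U)\|$ by $C_{\widetilde\mu}\bigl(\|\zeta,q,1/h\|_{L^\infty},|\overline\zeta_1|,|1/h_{\rm wc}|\bigr)\|U\|$. The variables $\overline\zeta$ and $P_{\rm ch}$ are controlled linearly by $\overline\zeta_1$. Gr\"onwall's inequality then allows the solution to be continued unless the stated quantity blows up, exactly as in the proof of Theorem \ref{theo-trans-wp-disp}.
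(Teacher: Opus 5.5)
Your proposal is correct and takes the same route as the paper. You introduce $P_{\rm ch}$ and $\frac{d\overline\zeta}{dt}$ as auxiliary unknowns, and you use the explicit representation of $\partial_t q$ in $\mathcal{E}_-$ together with the positivity of $\mathcal{T}^2_{\widetilde\mu,\rm buo}$ to close the system as an ODE on the open set where $\inf h>0$ and $h_{\rm wc}>0$. You then apply Cauchy--Lipschitz, propagate the boundary condition at $x=-r$ through its time derivative, and obtain the blow-up criterion from Moser estimates and Gr\"onwall. Your additional remarks, that $R_0,R_1$ are understood on $\mathcal{E}_-$ and that $\mathcal{T}^2_{\widetilde\mu,\rm buo}\geq T^2_{\rm buo}$, only make explicit details the paper leaves implicit.
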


\begin{proof}
	We start by writing the coupled problem as an infinite-dimensional ODE.
	Using \eqref{explicit-timederq} and since $\mathcal{T}^2_{\widetilde{\mu}, \rm buo}(h, h_{\rm wc})$ is well-defined and positive whenever $h(-r)$ and $h_{\rm wc}$ do not vanish, we write system \eqref{shallow-open} and \eqref{added-springmass} as 
	\begin{equation}\label{ODE-0-wc}
		\begin{cases}	
			\partial_t \zeta = -\partial_x q \qquad &\text{in} \quad \mathcal{E}_-\\[5pt]
			\partial_t q = -R_0\partial_x \varphi + |\mathcal{E}_+|\dfrac{d^2\overline\zeta}{dt^2} e^{\frac{x+r}{\widetilde{\mu}}} \qquad &\text{in} \quad \mathcal{E}_-\\[7pt]
			\dfrac{d^2\overline\zeta}{dt^2}= \dfrac{1}{\mathcal{T}^2_{\widetilde{\mu}, \rm buo}(h, h_{\rm wc})} \Big (\!-\overline\zeta -\dfrac{P_0}{\epsilon}P_{\rm ch} + \eps \widetilde{\beta}(h, h_{\rm wc})\left(\dfrac{d\overline\zeta}{dt}\right)^2 + \mathcal{F}(-r)\Big)
			\\[12pt]
			\dfrac{dP_{\rm ch}}{dt}  =- \kappa P_{\rm ch} +{\epsilon}\dfrac{d\overline\zeta}{dt}
		\end{cases}
	\end{equation}
	We now denote by $W= (W_1, W_2, W_3, W_4, W_5)$ and $W^{\rm in}$ respectively the quintuples of unknowns $(\zeta, q, \overline\zeta, \frac{d\overline\zeta}{dt}, P_{\rm ch})$ and initial data $(\zeta^{\rm in}, q^{\rm in},\overline\zeta^{\rm in}, \overline\zeta_1^{\rm in}, P_{\rm ch}^{\rm in})$, with $P_{\rm ch}^{\rm in}$ as in \eqref{semilinear-ODE}. Then, after plugging the third line of \eqref{ODE-0-wc} into the second one, we write the Cauchy problem related to \eqref{ODE-0-wc} in the compact form
	\begin{equation}\label{compact-ODE-wc}
		\begin{cases}
			\dfrac{d W}{dt}= \Psi (W),\\[5pt]
			W(0)= W^{\rm in},
		\end{cases}
	\end{equation} 
	with 
	\begin{align*}
		&	\Psi_1(W)= -\partial_x W_2, \qquad \Psi_2(W)= -R_0\partial_x \left[ \varphi(W_1,W_2) \right]+ |\mathcal{E}_+|\Psi_4(W) e^{\frac{x+r}{\widetilde{\mu}}}, \qquad \Psi_3(W)= W_4,\\[2pt]
		&	\Psi_4(W) = \dfrac{1}{\mathcal{T}^2_{\widetilde{\mu}, \rm buo}\left(h(W_1), h_{\rm wc}(W_3)\right)} \Big[\!\!-W_3\!-\!\frac{P_0}{\epsilon}W_5 + \eps \widetilde{\beta}\left(h(W_1), h_{\rm wc}(W_3)\right)W_4^2 \\&\qquad \qquad \qquad \qquad \qquad \qquad\qquad \quad\quad +\!\left(\mathcal{F}(W_1,W_2)\right)\!(-r)\Big]\!,\\[2pt]
		&	\Psi_5(W)  =- \kappa W_5 +{\epsilon}W_4.
	\end{align*}
	
	Let us denote by $\mathcal{N}$ the open subset of $H^m(\mathcal{E}_-)\times H^{m+1}(\mathcal{E}_-)\times \mathbb{R}^3$ with $\inf_{\mathcal{E}_-} h>0$ and $h_{\rm wc}>0$. Analogously to the proof of Theorem \ref{theo-trans-wp-disp}, here $\Psi$ is a smooth map from $\mathcal{N}$ to $H^m(\mathcal{E}_-)\times H^{m+1}(\mathcal{E}_-)\times \mathbb{R}^3$ for $m\geq 1$. We then apply the Cauchy-Lipschitz theorem to establish the existence of $T>0$ and a unique solution $W\in C^1([0,T); H^m(\mathcal{E}_-)\times H^{m+1}(\mathcal{E}_-)\times \mathbb{R}^3)$ to \eqref{compact-ODE-wc}, which in turn implies that $\overline\zeta \in C^2([0,T); \mathbb{R})$. In addition, using the fifth component of the ODE in \eqref{compact-ODE-wc}, we also have that $P_{\rm ch}\in C^2([0, T); \mathbb{R})$. 
	Due to the assumption on $q^{\rm in}$ and $\overline\zeta^{\rm in}_1$ in \eqref{initial-ass-wc}, we have the equivalence 
	$$q=|\mathcal{E}_+|\dfrac{d\overline\zeta}{dt} \quad \text{at} \quad x=-r \quad \iff\quad \partial_t q=|\mathcal{E}_+|\dfrac{d^2\overline\zeta}{dt^2} \quad \text{at} \quad x=-r $$ 
	and the regularity of the solution together with the second equation in \eqref{ODE-0-wc} guarantees that the boundary condition in \eqref{coupling-con} is satisfied on $[0,T)$. Concerning the blow-up criterion, we argue as in the proof of Theorem \ref{theo-trans-wp-disp} using that Moser-type estimates imply that 
	\begin{align*}
		&	\|\Psi(W)\|_{H^m(\mathcal{E}_-)\times H^{m+1}(\mathcal{E}_-)\times \mathbb{R}^3} \\[2pt]&\leq C_{\widetilde{\mu}} \Big( \Big\| W_1, W_2, \frac{1}{h(W_1)}\Big\|_{L^\infty(\mathcal{E}_-)} , \left|W_4\right| , \Big| \frac{1}{h_{\rm wc}(W_3)} \Big| \Big) \|W\|_{H^m(\mathcal{E}_-)\times H^{m+1}(\mathcal{E}_-)\times \mathbb{R}^3}.
	\end{align*}\end{proof}

\appendix

\section{Non-dimensionalization}\label{sec-appendix}
In this appendix, we show the details of the derivation of some dimensionless equations and elastic energy introduced throughout the paper.
\subsection{Air pressure dynamics}\label{appsec-air}
Let us write the evolution equation \eqref{Pch-dyn} of the air pressure inside the OWC chamber as
\begin{equation}\label{Pch-dyn-ex}
	\frac{d P_{\rm ch}}{dt} +k P_{\rm ch} = 	\frac{\gamma P_{\rm atm }}{h_{\rm ch}} \frac{d}{dt}\Big(\frac{1}{|\mathcal{E}_+|}\int_{\mathcal{E}_+} \zeta\Big)
\end{equation}coupled with the initial condition
\begin{equation}\label{limit-Pch}
	P_{\rm ch}(0)= \frac{\gamma P_{\rm atm}}{h_{\rm ch}}\overline\zeta (0).
\end{equation}
We introduce the dimensionless spatial and time variables 
\begin{equation*}
	x'= \frac{x}{L}, \qquad z'=\frac{z}{h_0}, \qquad t'=\frac{t}{L/\sqrt{gh_0}}
\end{equation*}
and the dimensionless air pressure deviation, surface elevation and damping constant 
\begin{equation}\label{air-sur-dimless}
	P'_{\rm ch}= \frac{P_{\rm ch}}{\gamma P_{\rm atm}}, \qquad \zeta' = \frac{\zeta}{a}, \qquad \kappa=\frac{k L}{\sqrt{gh_0}}.
\end{equation}Injecting the dimensionless quantities into \eqref{Pch-dyn-ex}-\eqref{limit-Pch} then yields 
\begin{equation*}\label{eq-Pch-dimless}
	\frac{d P'_{\rm ch}}{dt'} +\kappa P'_{\rm ch} = \epsilon \frac{d}{dt'}\Big(\frac{1}{|\mathcal{E}'_+|}\int_{\mathcal{E}'_+} \zeta'\Big), \qquad P'_{\rm ch}(0)= \epsilon \overline \zeta (0),
\end{equation*}
where $\mathcal{E}'_+= \left(\frac{r}{L}, \frac{\ell}{L}\right)$ is the dimensionless chamber domain, which admits the unique solution 
\begin{equation}\label{Pch-dimless1}
	P'_{\rm ch}=\epsilon(I - \kappa \mathcal{D})\overline{\zeta'},
\end{equation} where $I$ is the identity operator and $\mathcal{D}$ is the delay operator defined by
\begin{equation*}
	\mathcal{D}\overline{\zeta'}(t')= \int_{0 }^{t'} e^{-\kappa (t'-s)}\overline{\zeta'}(s) ds.
\end{equation*}
Moreover, after introducing the dimensionless surface pressure
\begin{equation}\label{surpres-dimless}
	\underline{P}'= \frac{\underline{P}}{\rho g h_0},
\end{equation} the surface pressure constraint \eqref{surpres-ext} in the chamber domain becomes
\begin{equation*}
	\underline{P}'= \frac{\gamma P_{\rm atm}}{\rho g h_0} P'_{\rm ch} \qquad \text{in} \quad \mathcal{E}'_+.
\end{equation*}

\subsection{Elastic potential energy}\label{appsec-elastic}
Associated with Hooke's law \eqref{spring-force}, we define the elastic potential energy 
\begin{equation}\label{elastic-energy}
	E_{\rm ela}(\zeta)=\frac{1}{2} K \overline{\zeta}^2= \frac{1}{2}\frac{\gamma P_{\rm atm}|\mathcal{E}|}{h_{\rm ch}} \overline{\zeta}^2.
\end{equation}
After introducing the dimensionless elastic potential energy 
\begin{equation*}
	E'_{\rm spring}= \frac{E_{\rm ela}}{\rho g La^2 },
\end{equation*} we obtain from \eqref{elastic-energy} that
\begin{equation*}
	\rho g La^2 	E'_{\rm spring}(\zeta')= \frac{1}{2} Ka^2\overline{\zeta'}^2= \frac{1}{2} \frac{\gamma P_{\rm atm }|\mathcal{E}'|L}{ h_{\rm ch} }a^2\overline{\zeta'}^2, 
\end{equation*}which yields the dimensionless elastic potential energy definition
\begin{equation*}
	E_{\rm ela}'(\zeta')=\frac{1}{2}\tau \overline{\zeta'}^2
\end{equation*} with dimensionless stiffness $\tau=\dfrac{K}{\rho g L}= \dfrac{\gamma P_{\rm atm}|\mathcal{E}'|}{\rho g h_{\rm ch}}$.

\subsection{Newton's second law}\label{appsec-newton}
The motion of the water column in the OWC chamber is determined by Newton's second law \eqref{newton0}.
After introducing the dimensionless water column mass and interface pressure 
\begin{equation}
	m'_{\rm wc}= \dfrac{m_{\rm wc}}{ \rho h_0|\mathcal{E}_+|}, \qquad  \underline{\mathcal{P}}'=\dfrac{\underline{\mathcal{P}}}{\rho g h_0},
\end{equation} we rewrite \eqref{newton0} as
\begin{equation}\label{newton0-dimless}
	m'_{\rm wc}\frac{h_0^2}{L^2}\frac{d^2\overline{\zeta'}}{dt'^2}= -\frac{m'_{\rm wc}}{\eps} - \frac{\gamma P_{\rm atm}}{\rho g a}P'_{\rm ch}+\frac{1}{\eps |\mathcal{E}'_+|}\int_{\mathcal{E}'_+}\underline{\mathcal{P}}',
\end{equation}with $\zeta'$, $P'_{\rm ch}$ defined in \eqref{air-sur-dimless} and \eqref{surpres-dimless}. After resorting to \eqref{Pch-dimless1}, we end up with 
\begin{equation}\label{newton1-dimless}
	m'_{\rm wc}\frac{h_0^2}{L^2}\frac{d^2\overline{\zeta'}}{dt'^2}= -\frac{m'_{\rm wc}}{\eps} - P_0 (I-\kappa \mathcal{D})\overline{\zeta'}+\frac{1}{\eps |\mathcal{E}'_+|}\int_{\mathcal{E}'_+}\underline{\mathcal{P}}'
\end{equation}with $P_0$ as in \eqref{pressure-mean}. An equivalent form is obtained by means of Archimedes' principle. We recall that 
\begin{equation}\label{archi-prin}
	m_{\rm wc}= -\rho|\mathcal{E}_+|\eta_{\rm eq}
\end{equation}where $\eta_{\rm eq}<0$ is the dimensional vertical position of the water column bottom in an equilibrium state. Introducing the dimensionless vertical position of the water column bottom
\begin{equation*}
	\eta'_{\rm eq}=\frac{\eta_{\rm eq}}{a},
\end{equation*}
the dimensionless version of \eqref{archi-prin} reads
\begin{equation}\label{archimede-dimless}
	m'_{\rm wc}= -\eps \eta'_{\rm eq}.
\end{equation}From the dimensional interface contact constraint
\begin{equation}
	\eta=\eta_{\rm eq}+\overline{\zeta} \quad \text{in} \quad \mathcal{E_+},
\end{equation}we derive the dimensionless version
\begin{equation}\label{inter-con-dimless}
	\eta'= \eta'_{\rm eq}+\overline{\zeta'} \quad \text{in} \quad \mathcal{E}'_+,
\end{equation}where $\eta'={\eta}/{a}$
is the dimensionless interface elevation. Combining \eqref{archimede-dimless} with \eqref{inter-con-dimless}
allows us to rewrite \eqref{newton1-dimless} in the equivalent form
\begin{equation}\label{equi-newton-dimless}
	m'_{\rm wc}\frac{h_0^2}{L^2} \frac{d^2 \overline{\zeta'}}{dt'^2} = -\overline{\zeta'} -P_0(I-\kappa \mathcal{D})\overline{\zeta'} +\eta'+\frac{1}{\eps |\mathcal{E}'_+|}\int_{\mathcal{E}'_+} \underline{\mathcal{P}}'.
\end{equation} This formulation naturally leads to the introduction of the dimensionless hydrostatic buoyancy period $2\pi T_{\rm buo}$, defined by
$$ T^2_{\rm buo}=\frac{m'_{\rm wc}h_0^2}{L^2}= \frac{m_{\rm wc}h_0}{\rho |\mathcal{E}_+|L^2},$$
and, using this notation, \eqref{equi-newton-dimless} becomes 
\begin{equation*}
	T^2_{\rm buo}\frac{d^2\overline{\zeta'}}{dt'^2}= -\overline{\zeta'} - P_0 (I-\kappa \mathcal{D})\overline{\zeta'}+ \eta'+\frac{1}{\eps |\mathcal{E}'_+|}\int_{\mathcal{E}'_+}\underline{\mathcal{P}}'.
\end{equation*}

\subsection*{Acknowledgements.} The author is supported by the grant \textit{Dipartimento di 
	Eccellenza 2023-2027},
issued by the Italian Ministry of University and Research (MUR). He is also partially supported by the Gruppo Nazionale
per l’Analisi Matematica, la Probabilità e le loro Applicazioni (GNAMPA) of the
Istituto Nazionale di Alta Matematica (INdAM). 

\addcontentsline{toc}{section}{References}
\bibliographystyle{abbrv}
\bibliography{references}

\end{document}